\newtheorem{question}{Question}[section]
\newtheorem{lemma}[question]{Lemma}
\newtheorem{theorem}[question]{Theorem}
\newtheorem{conjecture}[question]{Conjecture}
\newtheorem{corollary}[question]{Corollary}
\newcounter{tbox}
\newcommand{\sta}[1]{\vspace*{0.3cm}\refstepcounter{tbox}\noindent{ \parbox{\textwidth}{(\thetbox) \emph{#1}}}\vspace*{0.3cm}}
\tikzstyle{box}=[shape=rectangle, text height=1.5ex, text depth=0.25ex, yshift=0.5mm, fill=white, draw=black, minimum height=5mm, yshift=-0.5mm, minimum width=5mm, font={\small}]
\tikzstyle{gate}=[shape=rectangle, text height=1.5ex, text depth=0.25ex, yshift=0.5mm, fill=white, draw=black, minimum height=5mm, yshift=-0.5mm, minimum width=5mm, font={\small}, tikzit category=circuit]
\tikzstyle{big gate}=[shape=rectangle, text height=1.5ex, text depth=0.25ex, yshift=0.5mm, fill=white, draw=black, minimum height=10mm, yshift=-0.5mm, minimum width=5mm, font={\small}, tikzit category=circuit]
\tikzstyle{Z dot}=[inner sep=0mm, minimum size=2mm, shape=circle, draw=black, fill={rgb,255: red,221; green,255; blue,221}, tikzit category=zx]
\tikzstyle{Z phase dot}=[minimum size=5mm, font={\footnotesize\boldmath}, shape=rectangle, rounded corners=2mm, inner sep=0.2mm, outer sep=-2mm, scale=0.8, tikzit shape=circle, draw=black, fill=white, tikzit draw=blue, tikzit category=zx]
\tikzstyle{X dot}=[Z dot, shape=circle, draw=black, fill={rgb,255: red,255; green,136; blue,136}, tikzit category=zx]
\tikzstyle{X phase dot}=[Z phase dot, tikzit shape=circle, tikzit draw=blue, fill={rgb,255: red,255; green,136; blue,136}, font={\footnotesize\boldmath}, tikzit category=zx]
\tikzstyle{hadamard}=[fill=yellow, draw=black, shape=rectangle, inner sep=0.6mm, minimum height=1.5mm, minimum width=1.5mm, tikzit category=zx]
\tikzstyle{paulibox}=[fill={rgb,255: red,221; green,221; blue,255}, draw=black, shape=rectangle, inner sep=0.6mm, minimum height=5mm, minimum width=5mm, font={\footnotesize}, text height=1.5ex, text depth=0.25ex, tikzit category=zx]
\tikzstyle{vertex}=[inner sep=0mm, minimum size=1mm, shape=circle, draw=black, fill=black, tikzit category=misc]
\tikzstyle{vertex set}=[inner sep=0mm, minimum size=2mm, shape=circle, draw=black, fill=white, font={\footnotesize\boldmath}, tikzit category=misc]
\tikzstyle{small black dot}=[fill=black, draw=black, shape=circle, inner sep=0pt, minimum width=1.2mm, tikzit category=circuit]
\tikzstyle{cnot ctrl}=[fill=black, draw=black, shape=circle, inner sep=0pt, minimum width=1.2mm, tikzit category=circuit]
\tikzstyle{cnot targ}=[fill=white, draw=white, shape=circle, tikzit category=circuit, label={center:$\oplus$}, inner sep=0pt, minimum width=2.1mm, tikzit fill={rgb,255: red,102; green,204; blue,255}, tikzit draw=black]
\tikzstyle{ket}=[fill=white, draw=black, shape=regular polygon, regular polygon sides=3, regular polygon rotate=-30, scale=0.7, inner sep=1pt, tikzit category=circuit, tikzit shape=rectangle, tikzit fill=green]
\tikzstyle{bra}=[fill=white, draw=black, shape=regular polygon, regular polygon sides=3, regular polygon rotate=30, scale=0.7, inner sep=1pt, tikzit category=circuit, tikzit shape=rectangle, tikzit fill=red]
\tikzstyle{scalar}=[shape=rectangle, text height=1.5ex, text depth=0.25ex, yshift=0.5mm, fill=white, draw=black, minimum height=5mm, yshift=-0.5mm, minimum width=5mm, font={\small}]
\tikzstyle{clabel}=[fill=white, draw=none, shape=rectangle, tikzit fill={rgb,255: red,56; green,255; blue,242}, font={\footnotesize}, inner sep=1pt, tikzit category=labels]
\tikzstyle{empty diagram}=[draw={gray!40!white}, dashed, shape=rectangle, minimum width=1cm, minimum height=1cm, tikzit category=misc]
\tikzstyle{white dot}=[Z dot]
\tikzstyle{gray dot}=[X dot]
\tikzstyle{white phase dot}=[Z phase dot]
\tikzstyle{gray phase dot}=[X phase dot]
\tikzstyle{small hadamard}=[hadamard]
\tikzstyle{simple}=[-]
\tikzstyle{hadamard edge}=[-, dashed, dash pattern=on 2pt off 0.5pt, thick, draw={rgb,255: red,68; green,136; blue,255}]
\tikzstyle{non edge}=[-, dashed, dash pattern=on 2pt off 1pt, draw=black]
\tikzstyle{box edge}=[-, dashed, dash pattern=on 2pt off 0.5pt, thick, draw={rgb,255: red,203; green,192; blue,225}]
\tikzstyle{brace edge}=[-, tikzit draw=blue, decorate, decoration={brace,amplitude=1mm,raise=-1mm}]
\tikzstyle{diredge}=[->]
\tikzstyle{double edge}=[-, double, shorten <=-1mm, shorten >=-1mm, double distance=2pt]
\tikzstyle{gray edge}=[-, {gray!60!white}]
\tikzstyle{pointer edge}=[->, very thick, gray]
\tikzstyle{boldedge}=[-, line width=1.6pt, shorten <=-0.17mm, shorten >=-0.17mm]
\newcommand{\leqnomode}{\tagsleft@true}
\newcommand{\reqnomode}{\tagsleft@false}
\def\dd{\hbox{-}}
\newcommand{\eps}{\varepsilon}
\newcommand{\set}[1]{\left\{#1\right\}}
\newcommand{\mt}{\emptyset}
\newcommand{\reel}{{\mathbb R}}
\newcommand{\nat}{{\mathbb N}}
\newcommand{\NN}{\mathbb{N}}
\newcommand{\sm}{\setminus}
\newcommand{\tw}{\mathsf{tw}}
\theoremstyle{definition}
\tikzset{every picture/.style={line width=0.75pt}} 
\title{Tree independence number V. Walls and claws}
\author{Maria Chudnovsky$^{\dagger}$}
\author{Julien Codsi $^{\mathsection}$}
\author{Daniel Lokshtanov$^{\ddagger}$}
\author{Martin Milani\v{c}$^{\star}$}
\author{Varun Sivashankar $^{\mathsection \parallel}$}
\thanks{$^{\dagger}$ Princeton University, Princeton, NJ, USA. Supported by NSF Grant
 DMS-2348219 and by AFOSR grant FA9550-22-1-0083.}
\thanks{$^{\mathsection}$ Princeton University, Princeton, NJ, USA. Supported by NSF-EPSRC Grant DMS-2120644 and by AFOSR grant FA9550-22-1-0083.}
\thanks{$^{\star}$ FAMNIT and IAM, University of Primorska, Slovenia. Supported in part by the Slovenian Research and Innovation Agency (I0-0035, research program P1-0285 and research projects J1-3001, J1-3002, J1-3003, J1-4008, J1-4084, and N1-0370) and by the research program CogniCom (0013103) at the University of Primorska.}
\thanks{$^{\ddagger}$ Department of Computer Science, University of California Santa Barbara, Santa Barbara, CA, USA}
\thanks{$^{\mathsection \parallel}$ Princeton University, Princeton, NJ, USA. Supported by AFOSR grant FA9550-22-1-0083.}
\date{}
\begin{document}

\begin{abstract}
Given a family $\mathcal{H}$ of graphs, we say that a graph $G$ is $\mathcal{H}$-free if no induced subgraph of $G$ is isomorphic to a member of $\mathcal{H}$.
Let $S_{t,t,t}$ be the graph obtained from  $K_{1,3}$ by subdividing each edge $t-1$ times, and let $W_{t\times t}$ be the $t$-by-$t$ hexagonal grid.
Let $\mathcal{L}_t$ be the family of all graphs $G$ such that $G$ is the line graph of some subdivision of $W_{t \times t}$.
We prove that for every positive integer  $t$ there exists $c(t)$ such that  every
$\mathcal{L}_t \cup \{S_{t,t,t},  K_{t,t}\}$-free $n$-vertex  graph  admits a tree decomposition in which
the maximum size of an independent set in each bag is at most $c(t)\log^4n$.
This is a variant of a conjecture of Dallard, Krnc, Kwon, Milani\v{c}, Munaro, \v{S}torgel, and Wiederrecht from 2024.
This implies that  the \textsc{Maximum Weight Independent Set} problem, as well as
 many  other natural algorithmic problems,
  that are  known to be
  \textsf{NP}-hard in general, can be solved in quasi-polynomial time if
  the input graph is $\mathcal{L}_t \cup \{S_{t,t,t},K_{t,t}\}$-free.
As part of our proof, we   show  that for every positive integer  $t$ there exists an integer $d$ such that  every
$\mathcal{L}_t \cup \{S_{t,t,t}\}$-free graph admits a balanced separator that is contained
in the neighborhood of at most $d$ vertices.
\end{abstract}

\maketitle

\section{Introduction} \label{sec:intro}
All graphs in this paper are finite and simple and all logarithms are base $2$.
Let $G = (V(G),E(G))$ be a graph. For a set $X \subseteq V(G)$ we denote by $G[X]$ the subgraph of $G$ induced by $X$, and by $G \setminus X$ the subgraph of $G$ induced by $V(G) \setminus X$. In this paper, we use induced subgraphs and their vertex sets interchangeably.
For graphs $G,H$ we say that $G$ {\em contains $H$} if $H$ is
isomorphic to $G[X]$ for some $X \subseteq V(G)$. In this case, we say that
$X$ {\em is an $H$ in $G$}.  We say that
$G$ is {\em $H$-free} if $G$ does not contain $H$.
For a family $\mathcal{H}$ of graphs, we say that $G$ is
{\em $\mathcal{H}$-free} if $G$ is $H$-free for every $H \in \mathcal{H}$.

Let $v \in V(G)$. The \emph{open neighborhood of $v$}, denoted by $N_G(v)$, is the set of all vertices in $V(G)$ adjacent to $v$. The \emph{closed neighborhood of $v$}, denoted by $N_G[v]$, is $N(v) \cup \{v\}$. Let $X \subseteq V(G)$. The \emph{open neighborhood of $X$}, denoted by $N_G(X)$, is the set of all vertices in $V(G) \setminus X$ with at least one neighbor in $X$. The \emph{closed neighborhood of $X$}, denoted by $N_G[X]$, is $N_G(X) \cup X$. When there is
no danger of confusion, we omit the subscript $G$.
Let $Y \subseteq V(G)$ be disjoint from $X$. We say $X$ is \textit{complete} to $Y$ if all edges with an end in $X$ and an end in $Y$ are present in $G$, and $X$ is \emph{anticomplete}
to $Y$ if there are no edges between $X$ and $Y$.

For a graph $G$, a \emph{tree decomposition} $(T, \chi)$ of $G$ consists of a tree $T$ and a map $\chi\colon V(T) \to 2^{V(G)}$ with the following properties:
\begin{enumerate}
\itemsep -.2em
    \item For every $v \in V(G)$, there exists $t \in V(T)$ such that $v \in \chi(t)$.

    \item For every $v_1v_2 \in E(G)$, there exists $t \in V(T)$ such that $v_1, v_2 \in \chi(t)$.

    \item For every $v \in V(G)$, the subgraph of $T$ induced by $\{t \in V(T) \mid v \in \chi(t)\}$ is connected.
\end{enumerate}

For each $t\in V(T)$, we refer to $\chi(t)$ as a \textit{bag of} $(T, \chi)$.  The \emph{width} of a tree decomposition $(T, \chi)$, denoted by $width(T, \chi)$, is $\max_{t \in V(T)} |\chi(t)|-1$. The \emph{treewidth} of $G$, denoted by $tw(G)$, is the minimum width of a tree decomposition of $G$.  Graphs of bounded treewidth  are well understood both  structurally
\cite{RS-GMXVI} and algorithmically \cite{Bodlaender1988DynamicTreewidth}.

A {\em stable (or independent) set} in a graph $G$ is a set of pairwise non-adjacent vertices of $G$. The {\em stability (or independence) number} $\alpha(G)$ of $G$ is the maximum size of a stable set in $G$.  The  \textsc{Maximum Weight Independent Set (MWIS)} problem is
the problem whose input is a graph $G$ with weights on its vertices, and
whose output is a stable set of maximum total weight in $G$. \textsc{MWIS}
is known to be \textsf{NP}-hard \cite{alphahard}, but it can be solved
efficiently (in polynomial time)  in graphs of bounded treewidth.
Motivated by this fact,
Dallard, Milani\v{c}, and \v{S}torgel \cite{dms2} defined
a related graph width parameter, specifically targeting the complexity of the
\textsc{MWIS} problem.
The {\em independence number} of a tree decomposition
$(T, \chi)$ of $G$ is $\max_{t \in V(T)} \alpha(G[\chi(t)])$. The {\em tree independence number} of $G$, denoted tree-$\alpha(G)$, is the minimum independence number of a tree decomposition of $G$.
Results of Yolov~\cite{Yolov18} (or of Dallard et al.~\cite{dfgkm,dms2})  imply that \textsc{MWIS} can be solved in polynomial time on graphs of bounded tree independence number.
These results further imply that  \textsc{MWIS} can be solved in quasi-polynomial time in graph classes with tree independence number
polylogarithmic in the number of vertices.
Lima et al.~\cite{lima2024tree} observed that the algorithm of Dallard et al~\cite{dms2} can be extended to a much more general class of problems. We refer the reader to
\cite{TI2} for a detailed discussion of the algorithmic applications of polylogarithmic
bounds on the tree independence number. Tree independence number also has connections to coarse geometry \cite{CoarseGeoKwon, Agelos}.

Graph classes admitting useful bounds on their tree independence number were studied further
in \cite{dms4} and \cite{dms3}, where \cite{dms4} focused on excluding complete bipartite graphs.
In particular, the following was conjectured in \cite{dms4}.
Let $\mathcal{S}$ be the set of forests every component of which has at most three leaves, and for a graph $G$ let $L(G)$ denote the line graph of $G$.
\begin{conjecture}
\label{K2tpath}
For every positive integer $t$ and for all $S,T \in \mathcal{S}$, there
exists $c=c(t,S,T)$ such that every $\set{K_{t,t}, S, L(T)}$-free graph
has tree independence number at most $c$.
\end{conjecture}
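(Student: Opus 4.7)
My first step is to normalize the two forbidden trees. Every tree in $\mathcal{S}$ is an induced subgraph of some $S_{t',t',t'}$, and the induced-subgraph relation is preserved by the line-graph operation on trees, so passing from $S$ and $T$ to a common envelope $S_{t',t',t'}$ (with $t' = t'(t, S, T)$ chosen large enough) only weakens the forbidden-induced-subgraph list. Assuming for simplicity that $S$ and $T$ are connected (the disconnected case requires a more careful choice of envelope but is not the essential difficulty), it suffices to prove the conjecture in the special case $S = T = S_{t',t',t'}$.

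My next step is to observe that, after this normalization, the class in the conjecture is contained in the class considered in the main theorem of the paper. Concretely, $L(S_{t',t',t'})$-freeness implies $\mathcal{L}_{t''}$-freeness for some $t'' = t''(t')$: inside any subdivision $s(W_{t'' \times t''})$ with $t''$ large enough, one can locate a branching vertex of the wall together with three internally disjoint incident subpaths of length at least $t'$ heading in three distinct directions; passing to the line graph, the three edges at the branching vertex form a triangle and the three tails become three induced paths attached to it, exhibiting an induced $L(S_{t',t',t'})$. The main theorem of the paper then applies directly and produces a tree decomposition with independence number at most $c(t)\log^4 n$, so the conjecture already holds up to a polylogarithmic overhead.

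The genuinely hard part is to eliminate this polylogarithmic factor and obtain a constant bound. Recursive balanced-separator arguments inherently accumulate a $\log$ factor at each level, so a constant bound requires a new structural ingredient that exploits $L(T)$-freeness more directly than $\mathcal{L}_{t''}$-freeness does. The plan I would pursue is to open up the recursion in the proof of the main theorem and argue that iterating its ``wall-extraction'' step a super-constant number of times must produce an induced $L(S_{t',t',t'})$ inside the graph, so that $L(S_{t',t',t'})$-freeness forces the recursion to terminate at constant depth. The chief obstacle here is that induced-subgraph certificates are delicate to combine across recursion levels: the substructures produced at different levels need not assemble into an induced copy of the desired line graph of a subdivided claw, and controlling how the pieces attach seems to require either a new Ramsey-type stitching lemma or a direct, non-recursive structural decomposition theorem for $\{K_{t,t}, S, L(T)\}$-free graphs in the spirit of the decomposition theorems for classes excluding a fixed pattern.
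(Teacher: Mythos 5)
The statement you have been asked to prove is labelled a \emph{conjecture} in the paper, explicitly attributed to Dallard et al.~\cite{dms4}, and the paper does not prove it. What the paper establishes is the weaker Theorem~\ref{thm_claw_treealpha}, bounding the tree independence number of $\mathcal{M}_t$-graphs by $c(t)\log^4 n$ rather than by a constant, and a constant bound only under the substantially stronger hypothesis that $K_{1,t}$ (rather than $K_{t,t}$) is excluded. Your first two steps are essentially sound for connected $S$ and $T$, and they do recover exactly the paper's polylogarithmic bound: a connected member of $\mathcal{S}$ is an induced subgraph of some $S_{t',t',t'}$; taking line graphs preserves induced-subgraph containment for forests; and a sufficiently large subdivided wall does contain an induced $L(S_{t',t',t'})$ in its line graph (route three long vertex-disjoint arms out of a central branch vertex), so the hypothesis of the conjecture lands inside $\mathcal{M}_{t^*}$ for a suitable $t^*$. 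But the normalization step is not a harmless simplification for disconnected $S$ or $T$: a forest in $\mathcal{S}$ with two or more three-leaf components is not an induced subgraph of any single $S_{t',t',t'}$, which has exactly one branch vertex, so the envelope would have to be a disjoint union of subdivided claws, and you have not explained how to handle that case.

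The substantive gap is the one you already acknowledge: your proposal reaches $O(\log^4 n)$ and then only \emph{describes} a program for pushing that to a constant. That program is not an argument. The paper's machinery (bounded-core balanced separators, then the layered-sets iteration) runs for $\Theta(\log n)$ rounds and deletes boosting sets whose stability number is itself polylogarithmic; nothing in ``open up the recursion and force it to terminate at constant depth by $L(S_{t',t',t'})$-freeness'' says how those boosting sets could be avoided or absorbed. You correctly identify the obstacle -- that substructures produced at different recursion levels need not assemble into an induced copy of $L(S_{t',t',t'})$ -- but you offer no stitching lemma, no bound on recursion depth in terms of such a witness, and no alternative decomposition. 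In short, the conjecture is posed in the paper as open, and your reduction to $\mathcal{M}_{t^*}$ plus the wish for a new structural theorem is a reformulation of the problem, not a proof.
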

In this paper, we study a variant of this conjecture.
Our main result is the following.
Let $t$ be a positive integer.
We denote by $S_{t,t,t}$ the graph obtained from the complete bipartite graph $K_{1,3}$ by subdividing each edge $t
-1$ times (so each edge is replaced by a $t$-edge path,
and $K_{1,3}$ is $S_{1,1,1}$). We call the unique degree-three vertex of $S_{t,t,t,}$  {\em the center} of $S_{t,t,t,}$.
We denote by $W_{t\times t}$  the $t$-by-$t$ hexagonal grid (also known as the
$t \times t$-wall).
Let $\mathcal{L}_t$ be the family of graphs $G$ for which there exists a
subdivision $H$ of   $W_{t \times t}$ such that $G=L(H)$. Let $\mathcal{M}_t$ be the class of all $\mathcal{L}_t \cup \{S_{t,t,t}, K_{t,t}\}$-free graphs. We prove:

\begin{theorem}
\label{thm_claw_treealpha}
For every positive integer $t$ there exists $c=c(t)$ such that
every $n$-vertex graph in $\mathcal{M}_t$ with $n\ge 2$ has tree independence number at most $c \log^4 n$.
    \end{theorem}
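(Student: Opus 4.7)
The plan is to derive Theorem~\ref{thm_claw_treealpha} from the balanced separator lemma announced in the abstract, via a recursive construction of the tree decomposition that exploits $K_{t,t}$-freeness to bound the independence number of each separator.

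The recursion relies on the folklore inequality: for any induced subgraph $H$ of $G$ and any balanced separator $X$ of $H$,
\[
\text{tree-}\alpha(H) \le \alpha(X) + \max_{C} \text{tree-}\alpha(H[C]),
\]
where $C$ ranges over the connected components of $H \setminus X$. Since each such $C$ satisfies $|C| \le 2|V(H)|/3$, iterating this $O(\log n)$ times gives $\text{tree-}\alpha(G) \le O(\log n) \cdot s(n)$, where $s(n)$ is the maximum of $\alpha(X_H)$ taken over induced subgraphs $H$ of $G$ and suitable balanced separators $X_H$. The target bound $c(t)\log^4 n$ is then reached by exhibiting, for every induced subgraph $H$ of $G$, a balanced separator $X_H$ with $\alpha(X_H) \le c_1(t) \log^3 |V(H)|$.

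First, I would invoke the balanced separator lemma (the secondary result stated in the abstract, to be proven earlier in the paper), which provides, for every induced subgraph $H \in \mathcal{M}_t$, a balanced separator $X_H \subseteq N_H[v_1] \cup \cdots \cup N_H[v_d]$ for some $d = d(t)$ vertices $v_1, \ldots, v_d \in V(H)$. Next, using $K_{t,t}$-freeness, I would refine $X_H$ to a balanced separator of polylogarithmic $\alpha$-value. The naive bound $\alpha(X_H) \le \sum_i \alpha(N[v_i])$ is too weak, since $\alpha(N[v])$ may be linear in $n$ even in $\mathcal{M}_t$: for example, $\alpha(N[v]) = n - 1$ when $v$ is the center of $K_{1,n-1}$, which lies in $\mathcal{M}_t$ for $t \geq 2$. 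Instead, the plan is to apply the balanced separator lemma recursively inside each $G[N_H[v_i]] \in \mathcal{M}_t$, using $K_{t,t}$-freeness at each level to control the blow-up in $\alpha$.

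Concretely, the inner recursion iterates the dominated-separator property inside $\bigcup_i N_H[v_i]$ to produce, after $O(\log |V(H)|)$ levels, a subset $X_H' \subseteq \bigcup_i N_H[v_i]$ that still separates $H$ in a balanced way but has $\alpha(X_H') \le c_1(t)\log^3 |V(H)|$; the outer recursion then applies the standard inequality above to obtain $\text{tree-}\alpha(G) \le c(t)\log^4 n$.

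The main obstacle is the refinement step: producing a balanced separator of polylogarithmic $\alpha$-value from the dominated-separator property. This is where $K_{t,t}$-freeness plays an essential role: without it, even the full union $\bigcup_i N[v_i]$ may have linear independence number (as in $K_{1,n-1}$), and only a carefully chosen strict subset $X_H' \subsetneq \bigcup_i N[v_i]$ can achieve the polylogarithmic bound. I expect the argument to proceed via a general reduction lemma (likely drawn from an earlier installment in this series) that converts dominated balanced separators into small-$\alpha$ separators in $K_{t,t}$-free graphs, with the four powers of $\log n$ in the final bound arising as one outer recursion on graph size combined with three inner logarithmic factors contributed by the refinement.
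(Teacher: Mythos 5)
Your high-level plan — reduce Theorem~\ref{thm_claw_treealpha} to producing, in every induced subgraph, a balanced separator of polylogarithmic independence number, and then pay one extra $\log n$ factor for the outer recursion — matches the paper's skeleton (the paper uses Lemma~\ref{lemma:bs-to-treealpha} in place of your ``folklore inequality,'' which is the same idea packaged so that one needs a balanced separator for every weight function, not just the uniform one). You also correctly identify the crux: converting $d$-breakability (Theorem~\ref{thm:domsep}) into separators with small stability number.

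However, the proposal for that crux step is a gap, not a proof. The ``inner recursion inside $\bigcup_i N_H[v_i]$'' does not work as described: a balanced separator of the induced subgraph $G[\bigcup_i N_H[v_i]]$ has no reason to still separate $H$ in a balanced way, so the recursion has no invariant to maintain, and there is no ``general reduction lemma from an earlier installment'' that does this conversion for you. The actual proof has to do several genuinely new things. First, it proves a Ramsey-type bound (Theorem~\ref{few big independent neighborhoods in big independent set}) showing that in a $\{K_\gamma^{(2)},K_{t,t}\}$-free graph, the set of vertices $z$ with $\alpha(N(z)\cap Y)\ge\alpha(Y)/C$ has small independence number. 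Second, it runs a ``layered sets'' argument (Theorem~\ref{thm : boosted construction result}): repeatedly extract small-core balanced separators, keep track of which vertices are hit by many of them, and prune those using the Ramsey-type bound; this is what eventually controls the independence number. Third, it upgrades the $\tfrac12$-balanced separators with small cores to \emph{boosted} separators with small cores and a boosting set of polylogarithmic independence number (Theorem~\ref{thm : boosting}), where the boosting step itself again invokes the layered-sets argument with a carefully chosen auxiliary weight function and uses $S_{t,t,t}$-freeness via a first-moment argument over a random triple. Finally, after extracting a large collection of separators with pairwise anticomplete cores such that no vertex lies in $t$ of them (Theorem~\ref{thm:disjoint separators}, using $K_{t,t}$-freeness), Section~\ref{sec:claw} uses a ``connectifier'' structure (Lemma~\ref{lem:conntw15}) together with further Ramsey/pigeonhole arguments to reach a contradiction with $S_{t,t,t}$-freeness if the putative balanced separator had large independence number. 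None of this is a straightforward recursion; the $K_{t,t}$-freeness is exploited not by localizing to $N[v_i]$ but globally through Theorem~\ref{few big independent neighborhoods in big independent set} and Lemma~\ref{lemma: no vertex in t separator}. So while the exponent bookkeeping in your proposal happens to land on the right answer, the central technical content of the proof is missing.
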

For a graph $G$, a function  $w\colon V(G) \rightarrow [0,1]$ is a {\em weight function} if $\sum_{v \in V(G)} w(v) \leq 1$.
For  $S \subseteq V(G)$, we write $w(S) \coloneqq  \sum_{v \in S} w(v)$.
A weight function $w$ is a {\em normal weight function} on $G$ if $w(V(G))=1$. If $0<w(V(G))<1$,  we call the function $w'\colon V(G) \rightarrow [0,1]$ given by $w'(v) = \frac{w(v)}{\sum_{u \in V(G)} w(u)}$ the
{\em normalized weight function of $w$}.
Let $c \in [0, 1]$ and let $w$ be a weight function on $G$.
A set $X \subseteq V(G)$ is a {\em $(w,c)$-balanced separator} if $w(D) \leq  c$ for every component $D$ of $G \setminus X$.
The set $X$ is a {\em $w$-balanced separator} if $X$ is a $(w,\frac{1}{2})$-balanced separator.
Given two sets of vertices $X$ and $Y $ of $G$, we say that $X$ is a {\em core}
for $Y$ if $Y \subseteq N[X]$.
A graph $G$ is said to be {\em $k$-breakable} if for every weight function $w\colon V(G) \rightarrow [0,1]$, there exists a $w$-balanced separator with a core of size strictly less than $k$. When the weight function $w$ is clear from the context, we might omit it from the notation.
Let $\mathcal{M}_t^*$ be the class of all $\mathcal{L}_t \cup \{S_{t,t,t}\}$-free graphs. As part of the proof of our main result, we show the following.

\begin{restatable}{theorem}{domsep}
  \label{thm:domsep}
    For every positive integer $t$, there is an integer $d=d(t)$ such that every graph $G \in \mathcal{M}_t^*$ is $d$-breakable.
\end{restatable}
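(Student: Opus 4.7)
The plan is to prove the contrapositive: for a suitably large $d=d(t)$, any graph $G \in \mathcal{M}_t^*$ that is not $d$-breakable must contain an induced subgraph belonging to $\mathcal{L}_t$, contradicting $\mathcal{L}_t$-freeness. I would fix a weight function $w\colon V(G)\to[0,1]$ witnessing non-$d$-breakability (every $w$-balanced separator has a core of size at least $d$) and aim to exhibit an induced copy of $L(H)$ for some subdivision $H$ of $W_{t\times t}$ inside $G$.

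The first step is to build a large induced subdivision of a wall in $G$. I would use a BFS layering from a carefully chosen vertex $u$ and argue that if the total weight were concentrated in few layers, then a small-core $w$-balanced separator would exist. Hence the weight is spread over many layers, which yields a long induced path $P_0$ between weight-heavy regions. In the heavy component that survives removal of $N[P_0]$, the same argument yields a second long induced path, and Menger-style reasoning produces many internally vertex-disjoint rungs between the two; Ramsey-type cleaning then extracts an induced substructure. Iterating this construction one obtains a large induced subdivision $H^*$ of $W_{k\times k}$ inside $G$, with $k=k(t,d)$ growing with $d$.

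The second step is to turn this induced wall subdivision into an induced element of $\mathcal{L}_t$ using $S_{t,t,t}$-freeness. At each branch vertex $b$ of $H^*$, three internally disjoint induced paths emanate toward neighboring branches; if all three had at least $t$ interior vertices and were pairwise anticomplete away from $b$, then $b$ together with the three arms would induce an $S_{t,t,t}$ centered at $b$, which is forbidden. Hence at every branch vertex the three arms must share vertices near $b$, and after a pigeonholing step on a sub-wall one may assume this merging takes a uniform shape: a triangle at $b$ from which the three long induced paths of $H^*$ emanate. For $k$ large enough compared to $t$, the induced subgraph thereby extracted is precisely $L(H)$ for some subdivision $H$ of $W_{t\times t}$, contradicting $G \in \mathcal{M}_t^*$.

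The main obstacle is the second step: translating the purely local consequence of $S_{t,t,t}$-freeness (no three long, mutually-far arms at a single branch) into the rigid global template of a line graph of a wall subdivision. One must rule out alternative merger patterns near branches (for instance, two arms sharing a long prefix while the third deviates, or chords that destroy induced-ness of the triangles) and then enforce uniformity across many branch vertices via Ramsey-type pigeonholing on sub-walls. This is where the blowup from $t$ to $k(t,d)$ is consumed, and where the quantitative dependence of $d$ on $t$ is ultimately fixed.
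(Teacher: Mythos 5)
Your proposal takes a completely different route from the paper, and it has genuine gaps in both steps.

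The paper does not try to build a wall inside $G$. After using Lemma~5.3 of \cite{QPTAS} to obtain a path $P$ whose closed neighborhood is a $w$-balanced separator (which is essentially what your BFS-layering step produces), it analyzes the attachment set $N(B)$ of the heavy component $B$ to $P$, isolates the ``hat'' vertices, and for each hat $h$ shows that a triple $\{z_1(h),z_2(h),z_3(h)\}$ is \emph{constricted} (no tree in the relevant subgraph contains all three, because such a tree would give an $S_{t,t,t}$). This invokes the three-in-a-tree theorem of Chudnovsky--Seymour to produce a faithful extended strip decomposition of $(G_h,\{z_1,z_2,z_3\})$ with some pattern $H(h)$. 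Then $\mathcal{L}_t$-freeness is used through Theorem~\ref{smalltwH}: the pattern $H(h)$ has no subdivision of $W_{t\times t}$ as a subgraph, hence bounded treewidth by Chuzhoy--Tan, and Lemma~\ref{lem:bigatom} converts a treewidth-balanced separator of $H(h)$ into a small-core $w$-balanced separator of $G$. The final contradiction comes from tracking which side of $P$ each hat's ``big atom'' occupies. Nowhere is an induced wall constructed.

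The gap in your first step: Menger-type reasoning gives internally disjoint rungs, not pairwise anticomplete ones, and the cross-edges between rungs (and between a rung and $P$) are exactly the combinatorial mess that extended strip decompositions are designed to handle. ``Ramsey-type cleaning'' does not substitute for this machinery; in a claw-ish free setting, cleaning to an induced wall is not possible in general. Moreover, your step 1 as stated is internally inconsistent: an induced subdivision of $W_{k\times k}$ with long arms contains $S_{t,t,t}$, so it cannot exist in an $S_{t,t,t}$-free graph; you can't first produce it and then fix it.

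The gap in your second step: $S_{t,t,t}$-freeness does not convert a wall subdivision into its line graph. The space of ``alternative merger patterns'' you must rule out at a branch vertex is not finite or easily enumerable, and this is precisely the part that carries the full combinatorial weight of the problem. The paper sidesteps this entirely by never trying to exhibit an induced $L(H)$; it uses $\mathcal{L}_t$-freeness only indirectly, via the fact that the \emph{pattern} graph of a faithful extended strip decomposition cannot contain a wall subdivision as a subgraph, hence has bounded treewidth. You are missing the key idea that the obstruction is read off the pattern graph of an extended strip decomposition, not built directly inside $G$.
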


This result is of independent interest. It is a significant step in the program of understanding induced subgraph (or induced minor) obstructions to small tree independence number (here by ``small'' we mean polylogarithmic in the size of the graph).
It  provides support for the following conjecture that was posed in~\cite{gartland2023quasi} and seems to be gaining popularity in the community:

\begin{conjecture}
    \label{conj:domsep}
    For every positive integer $t$, there is an integer $d=d(t)$ such that every $\mathcal{L}_t$-free graph $G$ with no induced subgraph isomorphic to a subdivision of the $t \times t$-wall is $d$-breakable.
\end{conjecture}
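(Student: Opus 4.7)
The plan is to prove the contrapositive: for a suitable $d=d(t)$, every $G\in\mathcal{M}_t^*$ is $d$-breakable. Assume for contradiction that $G\in\mathcal{M}_t^*$ is not $d$-breakable, witnessed by a weight function $w$; unpacking the definition, this means that for every $Z\subseteq V(G)$ with $|Z|<d$, the graph $G\setminus N[Z]$ contains a \emph{heavy component} $C_Z$ of weight strictly more than $1/2$. Any two heavy components $C_Z$ and $C_{Z'}$ must intersect since their weights sum to more than $1$, and this basic overlap will be the engine that lets me chain heavy components together in order to build long induced paths entirely inside the ``heavy region'' of $G$, staying away from whatever forbidden zone is specified at a given stage.

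The bulk of the argument is an iterative construction whose goal is to exhibit an induced $L(H)$ with $H$ a subdivision of $W_{t\times t}$, contradicting $\mathcal{L}_t$-freeness. Recall that such an $L(H)$ consists of \emph{branch triangles}, one per degree-$3$ vertex of $W_{t\times t}$, joined by internally disjoint induced paths that correspond to the subdivided edges of the wall. Starting from a seed branch triangle inside a heavy component, I would iterate the following growth step: (a) take a small set $Z'$ consisting of a dominating subset of the currently constructed vertices together with a chosen anchor; (b) apply the heavy-component hypothesis at $Z'$ to obtain a new heavy component avoiding $N[Z']$; (c) use its overlap with the previous heavy component to extract an induced path of prescribed length attaching where desired; (d) run a Ramsey/pigeonhole step to decide whether the new path closes off a new branch triangle or just extends an existing subdivided edge. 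After $O(t^2)$ rounds, the emergent structure is the desired induced $L(H)\in\mathcal{L}_t$.

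The main obstacle is step (d): ensuring that the assembled subgraph is genuinely induced and wall-shaped rather than merely a topologically grid-like network. The two required properties are: (i) no chord between any pair of freshly constructed induced paths, and (ii) exactly three paths meeting at each branch triangle. Both are enforced by the $S_{t,t,t}$-free hypothesis: any vertex from which three long internally disjoint induced paths emerge yields a forbidden subdivided claw, so spurious branching is impossible, and clusters of bad chords can be rerouted or absorbed by the same local argument. Combining this local cleaning with the global heavy-component iteration, and stacking Ramsey-style pigeonholes to maintain control across many iterations, should deliver the claimed $d=d(t)$, presumably a tower or exponential function of $t$ coming from the nested choices. The trickiest technical subpoint I anticipate is coordinating (c) and (d) across many stages, since one must choose the anchor in step (a) so that the next induced path does not only exist but lands at a specified location on the already-built structure; this is where the chaining via successive heavy components, rather than a one-shot application, is crucial.
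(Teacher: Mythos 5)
First, this statement is an \emph{open conjecture} in the paper, not a theorem: the paper proves only Theorem~\ref{thm:domsep}, which concerns the strictly smaller class $\mathcal{M}_t^*$ of $\mathcal{L}_t\cup\{S_{t,t,t}\}$-free graphs. Your proposal conflates the two. In your first sentence you switch to ``$G\in\mathcal{M}_t^*$,'' and step~(d) explicitly invokes ``the $S_{t,t,t}$-free hypothesis.'' But $S_{t,t,t}$-freeness is not available under the conjecture's assumptions: an $\mathcal{L}_t$-free graph with no induced subdivision of $W_{t\times t}$ can certainly contain $S_{t,t,t}$. Consequently the cleaning argument---suppressing spurious chords and unwanted branching on the grounds that three long induced paths meeting at a vertex would form a subdivided claw---collapses. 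For the conjecture you would also have to contemplate extracting an induced subdivision of the wall itself (the other excluded obstruction), a branch of the argument you never address.

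Second, even read as a route to Theorem~\ref{thm:domsep}, the sketch does not constitute a proof. The observation that any two heavy components must intersect is sound, but the iterative construction is described at a level where every hard step is named rather than carried out: it is not explained how the anchor in step~(a) forces the newly extracted path to attach at a prescribed location on the partial structure, how the assembled subgraph remains induced across $\Theta(t^2)$ rounds, or what the Ramsey bounds actually are. The paper's own proof of Theorem~\ref{thm:domsep} takes a genuinely different route: it selects a shortest induced path $P$ whose closed neighborhood is a balanced separator, analyzes the neighbors on $P$ of the unique heavy component and isolates the ``hat'' vertices, and, for each hat, invokes the three-in-a-tree theorem to obtain a faithful extended strip decomposition, from which small-core separators are produced via Lemma~\ref{lem:bigatom}. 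Directly building an induced member of $\mathcal{L}_t$ (or an induced subdivided wall) from the failure of breakability is exactly the kind of argument one would need to settle the conjecture, but your sketch does not supply the details that would make it go through.
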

In turn, Conjecture~\ref{conj:domsep}, together with Theorem~\ref{few big independent neighborhoods in big independent set} and the methods of Section~\ref{sec:layeredsets}, are promising steps toward the following:
\begin{conjecture}
    \label{conj:smalltreealph}
    For every positive integer $t$, there is an integer $d=d(t)$ such that for every $n\ge 2$, every
    $n$-vertex graph with no induced minor isomorphic to $K_{t,t}$ or to $W_{t\times t}$ has tree independence number at most $\log^d n$.
\end{conjecture}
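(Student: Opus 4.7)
The plan is to combine Theorem~\ref{thm:domsep}, which asserts $d$-breakability for the superclass $\mathcal{M}_t^* \supseteq \mathcal{M}_t$, with the additional $K_{t,t}$-free hypothesis available in $\mathcal{M}_t$, in order to build a tree decomposition whose bags have polylogarithmic independence number. Following what appears to be the general framework of this line of work (the reference to Theorem~\ref{few big independent neighborhoods in big independent set} and the layered-set method of Section~\ref{sec:layeredsets}), I would organize the argument in three stages.

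\textbf{Stage 1: Reduction to weighted balanced separators with small $\alpha$.} I first reduce the theorem to the following statement: there exists $c' = c'(t)$ such that for every induced subgraph $G' \subseteq G$ with $|V(G')|\ge 2$ and every normal weight function $w$ on $G'$, there is a $w$-balanced separator $X$ of $G'$ with $\alpha(G'[X]) \le c'\log^3 n$. Granted this, the standard recursive halving construction (take such an $X$ as a root bag, normalize $w$ on each component of $G'\setminus X$, and attach recursively-produced decompositions via edges to $X$) produces a tree decomposition whose recursion depth is $O(\log n)$ and whose bag-independence number is at most $c'\log^3 n$ times the depth, yielding the claimed bound $c\log^4 n$.

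\textbf{Stage 2: Extracting a dominated balanced separator.} Given $G' \in \mathcal{M}_t \subseteq \mathcal{M}_t^*$ and a weight function $w$, Theorem~\ref{thm:domsep} supplies a $w$-balanced separator $X_0 \subseteq N_{G'}[C_0]$ with $|C_0|<d$. For any independent set $I \subseteq X_0$, we obtain
\[
  |I| \;\le\; |I \cap C_0| \;+\; \sum_{v \in C_0} |I \cap N_{G'}(v)| \;\le\; d \;+\; d \cdot \max_{v \in C_0}|I \cap N_{G'}(v)|,
\]
so bounding $\alpha(G'[X_0])$ reduces to bounding $|I \cap N_{G'}(v)|$ uniformly for $v \in C_0$, where $I$ is a maximum independent set in $X_0$.

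\textbf{Stage 3: Using $K_{t,t}$-freeness to control large independent neighborhoods.} This is where the $K_{t,t}$-free hypothesis enters and, I expect, where the main obstacle lies. A vertex $v$ whose open neighborhood contains a large independent set $J$ must be ``witnessed'' by few other vertices with the same property, for otherwise, any $t$ pairwise non-adjacent such vertices each seeing a common $t$-subset of $J$ would produce an induced $K_{t,t}$; combined with the $S_{t,t,t}$-free structure, Theorem~\ref{few big independent neighborhoods in big independent set} should quantify exactly how rare it is for vertices to see many elements of a fixed independent set. Plugging this into the estimate from Stage~2 forces that either $\max_{v\in C_0}|I\cap N_{G'}(v)|$ is already at most $O(\log^2 n)$ or else the independent set $I$ itself is unavoidably split by a ``layered-set'' augmentation of $X_0$: apply breakability again on the sub-neighborhoods $N_{G'}(v) \cap I$ for $v \in C_0$, iterating $O(\log n)$ times, with each iteration halving the relevant independent-set mass and contributing only $O(\log^2 n)$ to the independence number of the growing separator. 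Accumulating over the at most $O(\log n)$ iterations yields the $c' \log^3 n$ bound required in Stage~1.

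The principal difficulty will be Stage~3: precisely quantifying, via Theorem~\ref{few big independent neighborhoods in big independent set}, the $K_{t,t}$-based restriction on how the core $C_0$ can interact with an independent set, and then carrying out the layered-set iteration so that errors compound only polylogarithmically rather than polynomially. The recursion in Stage~1 and the first application of breakability in Stage~2 are essentially bookkeeping once that bound is in hand.
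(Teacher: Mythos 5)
The statement you have been asked to prove is \emph{not} a theorem in the paper: it is Conjecture~\ref{conj:smalltreealph}, which the paper explicitly leaves open. The authors say only that Conjecture~\ref{conj:domsep} together with Theorem~\ref{few big independent neighborhoods in big independent set} and the layered-sets machinery ``are promising steps toward'' Conjecture~\ref{conj:smalltreealph}; they do not claim, and do not provide, a proof. Your proposal therefore cannot be ``essentially the same as the paper's proof,'' because no such proof exists in the paper.

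There is also a concrete gap in your argument that explains why the paper's tools do not directly yield Conjecture~\ref{conj:smalltreealph}. In Stage~2 you write ``Given $G' \in \mathcal{M}_t \subseteq \mathcal{M}_t^*$'' and invoke Theorem~\ref{thm:domsep}. But the hypothesis of the conjecture is that $G$ has no \emph{induced minor} isomorphic to $K_{t,t}$ or $W_{t\times t}$, and this class is not contained in $\mathcal{M}_t$ (nor in $\mathcal{M}_t^*$). In particular, $S_{t,t,t}$ is a tree, so it has no $W_{2\times 2}$ minor at all; a graph containing $S_{t,t,t}$ as an induced subgraph can easily avoid both $K_{t,t}$ and $W_{t\times t}$ as induced minors. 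Theorem~\ref{thm:domsep} crucially requires $S_{t,t,t}$-freeness and therefore does not apply to the graphs covered by the conjecture. To make your Stage~2 go through one would need $d$-breakability for the class excluding a wall induced minor (or a wall-subdivision induced subgraph) rather than excluding $S_{t,t,t}$; that is precisely the content of the open Conjecture~\ref{conj:domsep}, which the paper supports but does not prove. Moreover, even assuming Conjecture~\ref{conj:domsep}, the paper is careful to say the combination with Section~\ref{sec:layeredsets} would be ``promising steps'' rather than a complete argument, so further work beyond your outline would still be needed. The subsequent Stages~1 and~3 of your proposal are consistent with the general strategy the authors have in mind (and with how Theorem~\ref{thm claw} is actually proved for the smaller class $\mathcal{M}_t$), but they do not rescue the missing $d$-breakability input.
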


Also, together with Lemma~7.1 of \cite{TI2}, Theorem~\ref{thm:domsep} provides an alternative proof of Theorem~1.4 of \cite{dms4}, namely:

\begin{theorem}
For every positive integer $t$ and every pair of graphs $S,T\in \mathcal{S}$, there is an integer $d=d(t,S,T)$ such that the tree independence number of every $\{K_{1,t},S,L(T)\}$-free graph is at most $d$.
\end{theorem}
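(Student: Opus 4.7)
The plan is to derive the theorem by applying Theorem~\ref{thm:domsep} together with Lemma~7.1 of~\cite{TI2}. Lemma~7.1 of~\cite{TI2} provides the generic machinery needed to upgrade a breakability bound into a bound on the tree independence number for a hereditary class of $K_{1,t}$-free graphs: starting from a $w$-balanced separator $N[X]$ whose core $X$ has $|X|<d$, the $K_{1,t}$-freeness gives $\alpha(N[X])\le td$ (since $\alpha(N(v))\le t-1$ for every vertex $v$), and iterating this construction over components of the residual graph produces a tree decomposition whose every bag has independence number bounded by some function $f(d,t)$.

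It therefore suffices to show that every $\{K_{1,t},S,L(T)\}$-free graph is $d$-breakable for some $d=d(t,S,T)$. I would choose $t'=t'(t,S,T)$ sufficiently large and argue that the $\{K_{1,t},S,L(T)\}$-free class embeds into $\mathcal M_{t'}^*$, at which point Theorem~\ref{thm:domsep} supplies the required $d(t')$-breakability. When $S$ and $T$ are connected (that is, each is a subdivided claw or a path), the embedding is immediate: take $t'$ large enough that $S$ is an induced subgraph of $S_{t',t',t'}$ and that $L(T)$ is an induced subgraph of some $L(H)\in\mathcal L_{t'}$, so that $S$-freeness entails $S_{t',t',t'}$-freeness and $L(T)$-freeness entails $\mathcal L_{t'}$-freeness.

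The main obstacle is the multi-component case, since a single induced $S_{t',t',t'}$ contains only one degree-three vertex and so cannot, by itself, accommodate several subdivided-claw components of $S$. To bridge this gap I would use a Ramsey-style argument exploiting the $K_{1,t}$-hypothesis: given a huge induced $S_{t',t',t'}$ in $G$, the long induced arms interact with external vertices of $G$ in a very controlled way because $\alpha(N(v))\le t-1$ for every $v$, and this control allows one to extract inside $G$ many pairwise anticomplete induced copies of each connected component of $S$, assembling an induced copy of $S$ and contradicting $S$-freeness. A symmetric argument for $\mathcal L_{t'}$ versus $L(T)$, centered on degree-three ``junction'' vertices of a large subdivided wall in $G$, completes the embedding step.

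Once the embedding $\{K_{1,t},S,L(T)\}\text{-free}\subseteq\mathcal M_{t'}^*$ is established, Theorem~\ref{thm:domsep} delivers uniform $d(t')$-breakability, and Lemma~7.1 of~\cite{TI2} turns this into the tree-$\alpha$ bound $d(t,S,T)$ claimed in the theorem. The main obstacle throughout is the Ramsey-style embedding step for disconnected $S$ (respectively $T$); this is where the hypothesis $K_{1,t}$ has to be leveraged carefully to realize the multiple pairwise anticomplete subdivided-claw components that a single $S_{t',t',t'}$ (respectively a single member of $\mathcal L_{t'}$) cannot host on its own.
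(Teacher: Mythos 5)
Your high-level plan follows the route the paper indicates, and the observation that $K_{1,t}$-freeness converts a separator $N[X]$ with $|X|<d$ into one with $\alpha(N[X])\le td$, so that Lemma~\ref{lemma:bs-to-treealpha} applies, is correct. The gap is in the claimed embedding $\{K_{1,t},S,L(T)\}\text{-free}\subseteq\mathcal M_{t'}^*$.

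That inclusion is simply false when $S$ (or $T$) is disconnected, and the Ramsey-style repair you sketch cannot save it. Take $t=4$ and let $S=T$ be the disjoint union of two copies of $K_{1,3}$ (a legal member of $\mathcal S$), and let $G=S_{t',t',t'}$. Then $G$ has maximum degree $3$, so it is $K_{1,4}$-free; $G$ is a tree with a unique vertex of degree $3$, so it contains no two vertex-disjoint induced claws and hence no induced $S$; and $G$ is triangle-free, so it contains no induced $L(T)$. Thus $G$ is $\{K_{1,t},S,L(T)\}$-free, yet $G$ is itself an induced $S_{t',t',t'}$, so $G\notin\mathcal M_{t'}^*$ for any $t'$. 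Your proposed fix relies on ``external vertices'' of $G$ interacting with the arms of a large $S_{t',t',t'}$ to manufacture additional pairwise anticomplete subdivided-claw components, but in this example there are no external vertices at all and still no induced copy of $S$, so no amount of Ramsey cleaning can produce one. (Note that tree-$\alpha(G)\le 2$ here, so the theorem is fine; it is only the black-box reduction to $\mathcal M_{t'}^*$ that fails.) To make the paper's remark rigorous one must instead go back into the proof of Theorem~\ref{thm:domsep} and check that, in the situation where a balanced separator with a small core does not exist, the structure actually produced (a long induced path $P$, many attachment vertices of $N_1$, and/or the subdivided wall underlying Theorem~\ref{smalltwH}) supplies enough disjoint degree-$3$ junctions to realize a disconnected $S$ or $L(T)$, which is genuinely more information than merely having one induced $S_{t',t',t'}$ somewhere in $G$. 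Your proposal treats Theorem~\ref{thm:domsep} as a black box, and that is exactly where it breaks.
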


We remark that the majority of the proofs in this paper work in a slightly more general setup than excluding $S_{t,t,t}$, but we chose to present them in what we consider to be the most natural context.
A version  of Theorem~\ref{thm:domsep}  was recently proved independently in \cite{Marcin} by a somewhat different method.

\subsection{Proof outline and organization}

We start with Lemma~7.1 of \cite{TI2}:
\begin{lemma}\label{lemma:bs-to-treealpha}
  Let $G$ be a graph, let $c \in [\frac{1}{2}, 1)$, and let $d$ be a positive integer.
  If for every normal weight function $w$ on $G$, there is a
    $(w,c)$-balanced separator $X_w$ with $\alpha(X_w) \leq d$, then
        the tree independence number of $G$ is at most $\frac{3-c}{1-c}d$.
\end{lemma}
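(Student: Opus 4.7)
I plan to prove the lemma by a top-down recursive construction of a tree decomposition of $G$, driven by the balanced separator hypothesis. At each step we handle a ``piece'' $(A, B)$ with $A \subseteq V(G)$ and $B \subseteq A$ a ``boundary'' set satisfying $N_G(A \setminus B) \cap (V(G) \setminus A) = \emptyset$; the goal is to output a tree decomposition of $G[A]$ whose root bag contains $B$. Throughout, we maintain the invariant $\alpha(G[B]) \leq \beta := \frac{2d}{1-c}$, and we guarantee every bag has independence number at most $\kappa := \beta + d = \frac{3-c}{1-c}\,d$. Starting from the piece $(V(G), \emptyset)$ then produces the claimed decomposition of $G$.

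At each recursive step with $A \neq B$, let $I \subseteq B$ be a maximum independent set of $G[B]$ (so $|I| \leq \beta$), and define a normal weight function $w$ on $V(G)$ that places a fraction $p$ of the total mass uniformly on $I$ and the remaining $1 - p$ uniformly on $A \setminus B$, for $p \in (0,1)$ to be chosen. The hypothesis produces a $(w, c)$-balanced separator $X_w$ with $\alpha(X_w) \leq d$. Setting $X := X_w \cap A$ and the root bag $R := B \cup X$, we have $\alpha(R) \leq \alpha(B) + \alpha(X) \leq \beta + d = \kappa$, as desired. For each component $C$ of $G[A \setminus R]$, recurse on the sub-piece $(A_C, B_C)$ with $A_C := C \cup N_{G[A]}(C)$ and $B_C := N_{G[A]}(C) \subseteq R$, attaching the resulting subtree to $R$.

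The main obstacle is verifying the inductive invariant $\alpha(G[B_C]) \leq \beta$, and the key ingredient is an \emph{exchange argument}. Since $X \subseteq X_w$ and $C \cap X_w = \emptyset$, the component $C$ lies entirely within a single component $D$ of $G \setminus X_w$; hence $N_G(C) \subseteq X_w \cup D$, which implies $B_C \subseteq X_w \cup (B \cap D)$ and so $\alpha(B_C) \leq d + \alpha(B \cap D)$. It therefore suffices to establish $\alpha(B \cap D) \leq \beta - d$. For any independent set $J \subseteq B \cap D$, observe that $I' := J \cup (I \setminus (D \cup X_w))$ is independent in $G[B]$, because $D$ has no edges to the other components of $G \setminus X_w$; comparing $|I'| \leq |I|$ gives $|J| \leq |I \cap D| + |I \cap X_w| \leq |I \cap D| + d$. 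The balanced separator condition bounds $|I \cap D| \leq c|I|/p \leq c\beta/p$, and with $p$ near $1$ and $\beta = \frac{2d}{1-c}$ one checks $\alpha(B \cap D) \leq c\beta + d = \frac{(1+c)d}{1-c} = \beta - d$, closing the induction.

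The subsidiary technical challenge is reconciling the choice of $p$ with termination of the recursion: we need at least one vertex of $A \setminus B$ to lie in $X$ at each step. This is ensured by choosing $p$ so that the $w$-mass $1-p$ on $A \setminus B$ forces $X_w$ to intersect $A \setminus B$ (otherwise the component of $G \setminus X_w$ containing $A \setminus B$ would carry $w$-weight $\geq 1-p$, contradicting the balanced separator condition provided $1-p > c$). The delicate quantitative balancing between this termination requirement and the invariant bound---possibly via a small slackening of $\beta$ or a case analysis separating the regimes ``$|I|$ large vs.\ small compared with $|A \setminus B|$''---reconciles the two constraints. The resulting recursion produces a finite tree decomposition of $G$ in which every bag has independence number at most $\kappa = \frac{3-c}{1-c}\,d$, establishing the desired bound.
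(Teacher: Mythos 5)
The high-level recursion, the invariant $\alpha(B)\le\beta:=\frac{2d}{1-c}$, and the exchange argument bounding $\alpha(B\cap D)$ are sound, but the quantitative balancing you flag at the end is a genuine gap, not a technicality you can wave away. Your termination argument requires $1-p>c$, i.e.\ $p<1-c$, so that a component of $G\setminus X_w$ containing $A\setminus B$ would carry weight $>c$. On the other hand, to close the invariant at the stated constant you need $c|I|/p\le \beta-2d=c\beta$, i.e.\ $p\ge |I|/\beta$. These two demands are compatible only when $|I|<(1-c)\beta=2d$. But the invariant permits $|I|$ as large as $\beta=\frac{2d}{1-c}$, which is at least $4d$ when $c\ge\frac12$. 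So for $2d\le |I|\le \beta$ there is no admissible $p$, and the ``small slackening of $\beta$'' you suggest cannot rescue the exact constant $\frac{3-c}{1-c}d$. Also, the inequality ``$\alpha(B\cap D)\le c\beta+d$'' in your text is what you get at $p=1$, not for $p$ merely ``near $1$''; with $p<1$ the bound is $c\beta/p+d$.

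The approach is salvageable, but the termination argument needs strengthening rather than $p$-tuning alone. Take $p=|I|/\beta$ exactly (so the invariant closes with equality). Now argue termination as follows. Progress can only fail when $X=X_w\cap A\subseteq B$, $A\setminus B$ is a single component $C$ of $G[A\setminus B]$, and every vertex of $B$ has a neighbor in $C$ (otherwise either $|A\setminus B|$ or $|A|$ strictly decreases). In that case the component $D$ of $G\setminus X_w$ containing $C$ also contains all of $B\setminus X_w\supseteq I\setminus X_w$, and since $I$ is independent, $|I\cap X_w|\le\alpha(X_w)\le d$. Hence
\[
w(D)\ \ge\ (1-p)+p\cdot\frac{|I|-d}{|I|}\ =\ 1-\frac{pd}{|I|}\ =\ 1-\frac{d}{\beta}\ =\ \frac{1+c}{2}\ >\ c,
\]
contradicting that $X_w$ is a $(w,c)$-balanced separator. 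The crucial extra ingredient, which your write-up omits, is that $D$ must also absorb almost all of $I$, not just $A\setminus B$; your lower bound $w(D)\ge 1-p$ alone is too weak once $p$ is large. With this refinement your recursion terminates and the bag bound $\beta+d=\frac{3-c}{1-c}d$ is met. (I note the paper itself only cites this lemma from an earlier work and does not reprove it, so I cannot compare to a proof on the page; but the recursive approach you take is the standard one for such statements.)
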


In view of Lemma~\ref{lemma:bs-to-treealpha}, in order to prove Theorem~\ref{thm_claw_treealpha} it is enough to show:

\begin{restatable}{theorem}{claw}
  \label{thm claw}
For every positive integer $t$, there exists $c=c(t)$ such that
for every $n\geq 2$, every $n$-vertex graph $G$ in $\mathcal{M}_t$,
and every normal weight function $w$ on $G$, there is a
$(w,\frac{1}{2})$-balanced separator $X_w$ in $G$ with
$\alpha(X_w) \leq c \log^4 n$.
\end{restatable}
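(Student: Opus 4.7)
The plan is to start from Theorem~\ref{thm:domsep} and then \emph{upgrade} the bounded-core balanced separator it provides into one of polylogarithmic independence number, using the extra $K_{t,t}$-freeness (the only difference between $\mathcal{M}_t$ and $\mathcal{M}_t^*$) together with the layered-set machinery of Section~\ref{sec:layeredsets}.

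Concretely, given a normal weight function $w$ on $G\in\mathcal{M}_t$, the first step is to apply Theorem~\ref{thm:domsep} (valid since $\mathcal{M}_t \subseteq \mathcal{M}_t^*$) and obtain a $w$-balanced separator $Y_0$ together with a core $X_0\subseteq V(G)$ with $|X_0|\le d(t)$, so that $Y_0\subseteq N_G[X_0]$. This controls the \emph{size} of $Y_0$ through the core, but gives no useful bound on $\alpha(Y_0)$ by itself: in $\mathcal{M}_t^*$ a single open neighborhood $N_G(x)$ can have independence number as large as $n-1$ (for example at the center of a large claw). The role of $K_{t,t}$-freeness, via Theorem~\ref{few big independent neighborhoods in big independent set}, is precisely to restrict how large independent sets can spread across neighborhoods, and this is what lets us replace ``small core'' by ``small independence number.''

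The upgrade then proceeds neighborhood-by-neighborhood. For each $x\in X_0$, the induced subgraph $G[N_G(x)]$ lies again in $\mathcal{M}_t$, so Theorem~\ref{thm:domsep} and Theorem~\ref{few big independent neighborhoods in big independent set} apply to it as well. Using the layered-set construction of Section~\ref{sec:layeredsets} inside $G[N_G(x)]$, one produces a subset $Z_x\subseteq N_G(x)$ that plays the role of $Y_0\cap N_G(x)$ for separating the weight suitably, but whose independence number is at most polylogarithmic in $|V(G)|$. Taking the union $X_w = X_0\cup\bigcup_{x\in X_0} Z_x$ then gives the required $w$-balanced separator of $G$, and $\alpha(X_w)\le |X_0|+\sum_{x\in X_0}\alpha(Z_x)\le c(t)\log^4 n$.

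The main obstacle will be to keep the logarithmic factors in check through the procedure: each application of the layered-set / few-big-neighborhoods combination costs at most a constant-times-$\log n$ factor in the independence number, but the argument must bottom out in at most $O(\log n)$ levels while preserving the $\mathcal{M}_t$-structure at each intermediate graph and maintaining $w$-balance of the separator under construction. The exponent $4$ in $\log^4 n$ plausibly reflects the compounded losses from (i) the depth of the recursion on neighborhoods, (ii) the layered decomposition inside each neighborhood, (iii) the bound on big independent neighborhoods given by Theorem~\ref{few big independent neighborhoods in big independent set}, and (iv) the re-balancing needed when stitching the neighborhood-level separators together. Verifying that all four sources of logarithmic loss can be realized simultaneously, while respecting the forbidden-subgraph hypotheses at every recursive step, is the delicate part of the argument.
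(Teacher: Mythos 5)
Your high-level intuition — that $K_{t,t}$-freeness together with Theorem~\ref{few big independent neighborhoods in big independent set} is what allows passing from ``small core'' to ``small independence number'' — is correct, but the architecture you propose is not the one the paper uses, and as written it has a genuine gap that would not be easy to repair.

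The central problem is the claim that $X_w = X_0\cup\bigcup_{x\in X_0}Z_x$ remains a $w$-balanced separator. Theorem~\ref{thm:domsep} guarantees that $N[X_0]$ is a balanced separator; you propose to delete instead only $X_0$ together with a \emph{proper} subset $Z_x\subsetneq N(x)$ of each neighborhood. But then $G\setminus X_w$ strictly contains $G\setminus N[X_0]$, and the surviving vertices of $N(x)\setminus Z_x$ can reconnect distinct components of $G\setminus N[X_0]$ into a single heavy component. In the worst case — for instance when $N(x)$ is a large independent set whose vertices have neighbors on both ``sides'' of the separator — removing only a polylogarithmic-$\alpha$ piece $Z_x$ of $N(x)$ gives no control on how components merge. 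Nothing in ``apply Theorem~\ref{thm:domsep} and the layered-set machinery inside $G[N(x)]$'' supplies that control, because balance inside the induced subgraph $G[N(x)]$ (with whatever weight function you pick there) has no direct implication for balance in $G$. The paper spends all of Section~\ref{sec: better seps} on precisely this issue: the notion of a \emph{boosted separator} $(S,C)$, where $C$ absorbs the ``escape routes'' around $S$, is how one certifies that a smaller set $S$ still cuts the heavy component, at the cost of a boosting set $C$ with small $\alpha$. That step is not a per-neighborhood recursion; it is a global argument on $G$ that needs the $S_{t,t,t}$-freeness (claim~\eqref{claim 1/2 separator}) and re-weighting, and it already consumes $\log^3 n$.

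The actual proof is also structured quite differently from your sketch: rather than unioning a small number of local sets, Theorem~\ref{thm:disjoint separators} produces a single small-$\alpha$ set $X$ together with $8t^2d$ pairwise anticomplete cores $Y_i$ whose closed neighborhoods are all $\eps$-balanced separators of the remaining heavy component, with no vertex in more than $t$ of them. Then the proof of Theorem~\ref{thm claw} is by contradiction: sample $\mu$ vertices, find a structured connected subgraph by Lemma~\ref{lemma : connectifier}, and, using the pairwise anticompleteness of the $Y_i$ and a pigeonhole argument, extract three long disjoint paths all ending in $N[Y_j]$ and meeting at a single $y\in Y_j$ — an $S_{t,t,t}$, contradiction. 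None of this is a recursion into neighborhoods, and the exponent $4$ in $\log^4 n$ arises from (a) the $\log^3 n$ lost in the boosting step (Lemma~\ref{lem : boosting one step} gives $\log^2$, Corollary~\ref{corollary reducing max beta} adds one more $\log$) and (b) one final $\log n$ from the layered-set construction in Theorem~\ref{thm : boosted construction result}, not from the four sources you list. To proceed along the lines you have in mind, you would need to explain how the per-neighborhood sets $Z_x$ maintain global balance, and at present that step is unjustified and likely false without introducing something equivalent to the paper's boosted-separator machinery.
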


We now outline the proof of Theorem \ref{thm claw}.

Our first goal is to prove Theorem~\ref{thm:domsep}.
An important tool in that proof is ``extended strip decompositions'' from
\cite{Threeinatree}. They are introduced in Section~\ref{sec:strips}, and in Section~\ref{sec:Hfacts} we prove several results about the behavior of extended strip decompositions in $\mathcal{L}_t$-free graphs.

The actual proof of Theorem~\ref{thm:domsep} is presented in Section~\ref{sec:domsep}; it proceeds as follows.
Let $G \in \mathcal{M}_t^*$. We may assume that $G$ is connected.
From now on we fix a weight function $w$ and assume that $G$ does not have a $w$-balanced separator with a small core.
By using the normalized weight function of $w$, we may assume that $w$ is normal. By Lemma~5.3 of \cite{QPTAS} there is an induced path $P=p_1 \dd \dots \dd p_k$ in $G$ such that $N[P]$ is a $w$-balanced separator in $G$. Choosing $P$ with $k$
minimum, we may assume that there is a component $B$ of $G \setminus N[P \setminus \{p_k\}]$
with $w(B) > \frac{1}{2}$. We now analyze the structure of the set $N=N(B) \subseteq N(P \setminus \{p_k\})$.
We say that $v \in N$ is a {\em hat} if $v$ has
exactly two neighbors in $P$, and they are adjacent.
First, we show that the set of all vertices in $N$ that are not hats has a small core. Now we focus on one hat $h$, and use it
to show that $G$ (with a subset with a small core deleted)  admits an extended strip decomposition. This allows us to produce a separator $S(h)$ with a small core that is not yet balanced but exhibits several useful properties. More explicitly, the component of
$G \setminus S(h)$ with maximum $w$-weight  only meets $P$ on one side of $h$. So $h$ either ``points left'' or ``points right''.
Then we show that the hat with the earliest neighbors in $P$ points right, and the hat with the latest neighbors in $P$ points left. Now we focus on two
consecutive hats $h,h'$ where the change first occurs, and conclude
that $S(h) \cup S(h')$ is a $w$-balanced separator in $G$.
 This completes the proof of Theorem~\ref{thm:domsep}.

Let us now continue with steps toward the proof of Theorem~\ref{thm claw},  and so assume that $G \in \mathcal{M}_t$.
Our next goal is to show that we can choose sets $Y_1, \dots, Y_{\lceil \log n \rceil}$ such that for every $j$,  $|Y_j| \leq d$,  $Y_j$ is a core of a $w$-balanced separator for $G$, and, for an appropriately chosen integer $D$, no vertex of $G$ belongs to more than $\frac{\log n}{D}$ of the sets $N[Y_j]$.
To do so, we continue selecting sets $Y_j$ as above, keeping track of the so-called ``layers'' $L_j^i$, which are sets of vertices that belong to at least $i$ out of the $j$ separators chosen so far.
We maintain the property that $\alpha(L_{j}^i)$ is bounded from above by a value that decreases geometrically with $i$ and increases geometrically with $j$, but at a much slower rate (see~\eqref{claim: small Level boosted} for details).
The main result of Section~\ref{sec:alpha_of_neighborhood} ensures that we are able to maintain this property by deleting a set of small stability number.
As a consequence, the sets $L_{\lceil \log n \rceil}^i$ are empty for large enough $i$, and that is what we needed to achieve.
We call this technique ``the layered set'' argument. This is done in Section~\ref{sec:layeredsets} (in fact, the result there is more general, to accommodate the proofs in Section~\ref{sec: better seps}).

Next, in Subsection~\ref{sec: improving separator} we
strengthen the conclusion of Theorem~\ref{thm:domsep} and establish the existence of a more refined type of separator, that we call
a {\em boosted separator}.
Let $(S,C)$ be a pair of subsets of $V(G)$ and let $B$ be a component of $G\setminus C$ with maximum weight.
For $\eps\in (0,\frac{1}{2}]$, the pair $(S,C)$ is said to be a {\em $(w,\eps)$-boosted separator} if $w(B)\leq 1/2$ or if $S$ is an $\eps$-balanced separator of $B$.
We call $C$ the {\em boosting set} of $(S,C)$.
A set $X\subseteq V(G)$ is said to be a {\em core} of $(S,C)$ if $S\subseteq N_G[X]$.   The existence of boosted separators with small cores,  where,
in addition, the boosting set  has a small stability number, is established in
Theorem~\ref{thm : boosting} by an application of the layered set argument
 (to a carefully chosen weight function different from $w$).

Now, another application of the layered set argument allows us to construct   sets $Y_1, \dots, Y_{\lceil \log n \rceil} $ and a set $C$
such that  $|Y_i| \leq d$ (where $d$ is a fixed integer), $C$ has small stability number, each  $(N[Y_i], C)$ is a $(w,\eps)$-boosted
separator, and   no vertex of $G$ belongs to more than $\frac{\log n}{D}$ of the sets $N[Y_i]$ (for appropriately chosen $\eps$ and $D$).  In
Subsection~\ref{sec: disjoint} we use this collection of sets to select
 a subcollection
 $Y_1, \dots, Y_{8t^2d}$  such that
no vertex of $G$ belongs to more than $t$ of the sets $N[Y_i]$. This is done in Theorem~\ref{thm:disjoint separators}.

We are now ready to put everything together. This is done in
 Section~\ref{sec:claw}, where  we use the results described above to complete the proof of Theorem~\ref{thm claw}.
By a first moment argument, we produce a large set $X$ of vertices such that
for at least $4t^2d$ of the sets
$Y_1,  \dots, Y_{8t^2d}$ above, no two vertices of $X$  belong to the same component of $G \setminus (C \cup N[Y_i])$; say these are sets $Y_1, \dots, Y_{4t^2d}$. Then we use a result of \cite{tw15} to describe the structure of a minimal
connected subgraph  $H$ of $G$ containing $X$. We deduce that $H$ contains a large set $\mathcal{P}$ of pairwise disjoint paths,
each of which meets at least $t$ of the sets $N[Y_1], \dots N[Y_{4t^2d}]$ above.
We consider minimal
subpaths of elements of $\mathcal{P}$ with this property.
Using the fact that no vertex of $G$ belong to $t$ of the sets $N[Y_i]$, together with several Ramsey-type arguments, we can construct paths $P_1, \dots, P_{3d}$ of length at least $t$, that are subpaths of distinct elements of $\mathcal{P}$, such that the  first vertex of each $P_j$ belongs to $N[Y_1]$ (say),
and the rest of $P_j$ is disjoint from $N[Y_1]$ (and therefore anticomplete to
$Y_1$). Since $|Y_1| \leq d$, it follows
that there exist $y \in Y_1$ and three paths $P,Q,R \in \{P_1, \dots, P_{3d}\}$ such that
$P \cup Q \cup R \cup \{v\}$ is an $S_{t,t,t}$ in $G$, a contradiction. This completes the proof
of \ref{thm claw}.

\section{Constricted sets and extended strip decompositions} \label{sec:strips}

An important tool in the proof of Theorem~\ref{thm:domsep} is ``extended strip decompositions'' of \cite{Threeinatree}. We explain this now.
A set $C \subseteq G$ is a {\em hole in $G$} if $G[C]$ is  cycle of length at least four.
Similarly, a set $P\subseteq G$ is a {\em path in $G$} if  $G[P]$ is a path. Let $P=\{p_1, \dots, p_k\}$
be a path in $G$ where $p_ip_j \in E(G)$ if and only if $|j-i|=1$. We say that $p_1$ and $p_k$ are
the {\em ends} of $P$. The {\em interior} of $P$, denoted by
$P^*$, is the set $P \setminus \{p_1,p_k\}$.
For $i,j \in \{1, \dots. k\}$ we denote by $p_i \dd P \dd p_j$ the subpath of
$P$ with ends $p_i,p_j$.
Let $G,H$ be graphs, and let $Z \subseteq V(G)$. Let $W$ be the set of vertices of degree one in $H$. Let $T(H)$ be the set of all triangles of $H$.
Let $\eta$ be a map with domain
the union of $E(H)$, $V(H)$, $T(H)$,  and the set of all pairs $(e, v)$ where $e \in E(H)$, $v \in V (H)$,   and $e$ incident
with $v$, and range $2^{V(G)}$,  satisfying the following conditions:
\begin{itemize}
\item For every $v \in V(G)$ there exists
$x \in E(H)  \cup V(H) \cup T(H)$ such that $v \in \eta(x)$.
\item If $x,y \in E(H) \cup V(H) \cup T(H)$ and $x \neq y$, then
$\eta(x) \cap \eta(y)=\emptyset$.
\item For every $e \in E(H)$ and $v \in V(H)$ such that $e$ is incident with $v$, $\eta(e,v) \subseteq \eta(e)$.
\item Let  $e,f \in E(H)$ with  $e \neq f$,  and $x \in \eta(e)$ and $y \in \eta(f)$. Then $xy \in E(G)$ if and only if
$e, f$ share an end-vertex $v$ in $H$, and $x \in \eta(e, v)$ and
$y \in  \eta(f, v)$.
\item If $v \in V (H)$, $x  \in \eta (v)$, $y \in V (G) \setminus \eta(v)$, and
$xy \in E(G)$, then $y \in \eta(e, v)$ for some
$e \in E(H)$ incident with $v$.
\item  If $D \in T(H)$,  $x \in \eta (D)$, $y \in V (G) \setminus \eta(D)$
and $xy \in E(G)$, then
$y \in \eta(e,u) \cap \eta(e,v)$  for some distinct $u, v \in D$, where
$e$ is the edge $uv$ of $H$.
\item  $|Z| = |W |$, and for each $z \in Z$ there is a vertex $w \in W$ such that $\eta(e, w) = \{z\}$, where $e$ is the
(unique) edge of $H$ incident with $w$.
\end{itemize}
Under these circumstances, we say that  $\eta$ is an {\em extended strip decomposition of
$(G,Z)$ with pattern $H$} (see Figure~\ref{fig:ex esd}).
Let  $e$ be an edge of $H$ with ends $u,v$. An {\em $e$-rung} in
$\eta$ is a path $p_1 \dd \dots \dd p_k$ (possibly $k=1)$ in $\eta(e)$,
with $p_1 \in \eta(e,v)$, $p_k \in \eta(e,u)$ and
$\{p_2, \dots, p_{k-1}\} \subseteq \eta(e) \setminus (\eta(e,v)  \cup \eta(e,u))$. We say that $\eta$ is  {\em faithful} if for every $e \in E(H)$,
there is an $e$-rung in $\eta$.

A set $A \subseteq V(G)$ is an {\em atom} of $\eta$ if one of the following holds:
\begin{itemize}
\item $A=\eta(x)$ for some $x \in V(H) \cup T(H)$.
\item $A=\eta(e) \setminus (\eta(e,u) \cup \eta(e,v))$ for some
edge $e$ of $H$ with ends $u,v$.
\end{itemize}
For an atom $A$ of $\eta$, the {\em boundary} $\delta(A)$ of $A$ is defined as follows:
\begin{itemize}
\item If $v \in V(H)$ and  $A=\eta(v)$,  then
$\delta(A)=\bigcup_{e \in E(H) \; \colon \; e \text{ is incident with } v} \eta(e,v)$.
\item If $A=\eta(D)$ , and $D \in T(H)$ with $D=v_1v_2v_3$, then
$\delta(A)=\bigcup_{i \neq j \in \{1,2,3\}}\eta(v_iv_j,v_i) \cap \eta(v_iv_j,v_j)$.
\item If $A=\eta(e) \setminus (\eta(e,u) \cup \eta(e,v))$ for some
edge $e$ of $H$ with ends $u,v$, then $\delta(A)=\eta(e,u) \cup \eta(e,v)$.
\end{itemize}
A set $Z \subseteq V (G)$ is {\em constricted} for every $T \subseteq G$ such that $T$ is a tree,  $|Z \cap V (T )| \leq  2$.

\begin{figure}[h]
    \centering
\includegraphics[width=0.8\linewidth]{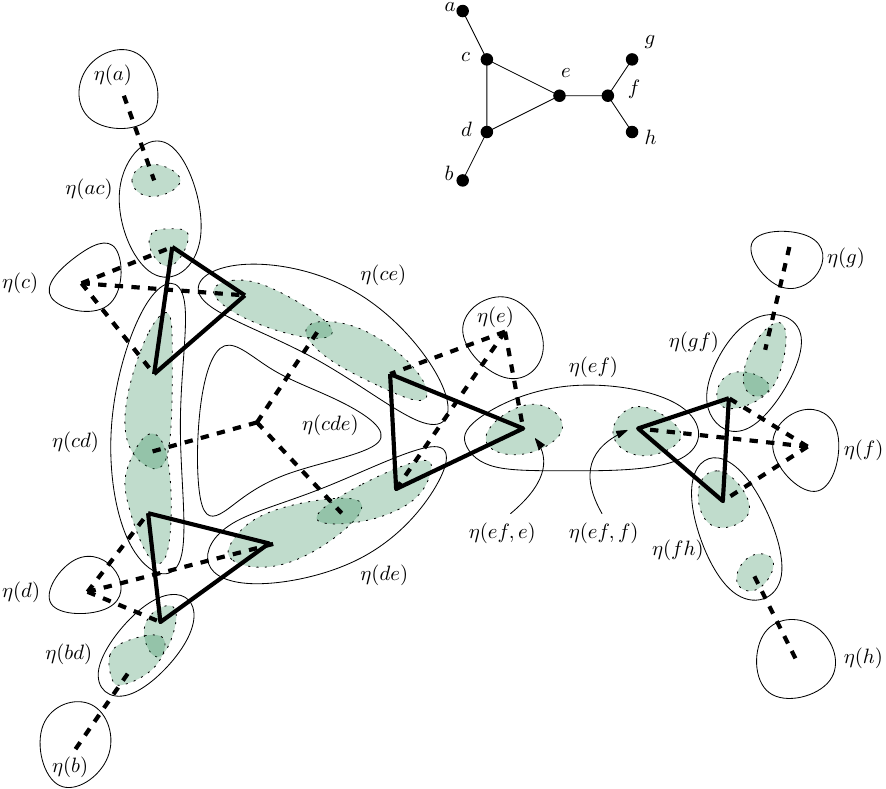}
    \caption{Example of an extended strip decomposition with its pattern (here dash lines represent potential edges).
    This figure was created by Pawe\l{} Rz\k{a}\.zewski and we use it with his permission.}
    \label{fig:ex esd}
\end{figure}

The main result of \cite{Threeinatree} is the following.
\begin{theorem}\label{stripdecomp}
Let $G$ be a connected graph and let $Z \subseteq  V (G)$ with $|Z| \geq 2$.
Then $Z$ is constricted if and only
if for some graph $H$, $(G, Z)$ admits a faithful extended strip decomposition with pattern $H$.
\end{theorem}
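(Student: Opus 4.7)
The proof is an equivalence, and I would address the two directions separately.

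For the easy direction, assume $\eta$ is a faithful ESD of $(G,Z)$ with pattern $H$ and suppose for contradiction that some induced subtree $T$ of $G$ contains three distinct vertices $z_1, z_2, z_3 \in Z$. By the seventh axiom, each $z_i$ is the unique element of an interface $\eta(e_i, w_i)$ at a leaf $w_i$ of $H$. Mapping each $u \in V(T)$ to the unique element $x(u) \in V(H) \cup E(H) \cup T(H)$ with $u \in \eta(x(u))$, the adjacency axioms force $x(u)$ and $x(u')$ to coincide or be ``locally incident'' in $H$ whenever $uu' \in E(T)$. Chasing these constraints along $T$ yields a connected, tree-like projected substructure in $H$ having $w_1, w_2, w_3$ among its leaves; such a substructure must contain some branching vertex $v \in V(H)$. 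The iff-clause in the edge--edge adjacency axiom then forces the three branches of $T$ arriving at $v$ via three distinct edges at $v$ to pass through interface sets $\eta(e,v), \eta(e',v), \eta(e'',v)$, and the same iff-clause makes any triple from these three sets pairwise adjacent in $G$. Hence $T$ contains a triangle, contradicting that $T$ is a tree.

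For the hard direction, my plan is induction on $|V(G)|$. The base case $|Z| = 2$ uses the pattern consisting of a single edge with both ends leaves; build the ESD around any induced $z_1$--$z_2$ path, placing its interior in the edge-atom and distributing the remaining vertices of $G$ among the remaining interfaces. For $|Z| \geq 3$: if there is a small vertex cut $S$ of $G$ non-trivially separating $Z$, split $Z$ accordingly, recurse on each side (augmenting the constricted set by a dummy vertex standing in for the other side), and glue the resulting sub-ESDs along $S$ by identifying their dummy leaves, obtaining $H$ as a one-vertex or one-edge identification of the two sub-patterns. If no small cut separates $Z$, we are in the ``3-connected core'' case, handled by the structural machinery of \cite{Threeinatree}: the constricted hypothesis forces the existence of either a long induced path or a triangle playing the role of the only non-leaf edge / triangle of $H$, around which the rest of $G$ organizes into the required interface and atom sets.

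The main obstacle is the 3-connected core case. Here the combinatorial hypothesis that no induced tree meets $Z$ in three vertices must be leveraged to produce a concrete structural decomposition. The strategy is to pick two elements $z_i, z_j \in Z$, take an induced $z_i$--$z_j$ path $P$ that is ``pulled tight'' in a suitable minimality sense, and analyze how the remaining vertices of $Z$ and of $G$ attach to $P$; constrictedness forces each remaining $z_k$ to attach to $P$ only near one end, which is exactly the structural rigidity needed to encode $P$ as a strip/edge of $H$. A secondary but nontrivial issue is the gluing step: when two sub-ESDs meet along a cut $S$, the interface sets $\eta(e, v)$ on the two sides must be combined so as to preserve the iff-clause of the edge--edge adjacency axiom and the faithfulness requirement that every edge of $H$ admit a rung. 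This typically forces $S$ to be absorbed into the interface sets on one or both sides, with careful bookkeeping to prevent unwanted adjacencies.
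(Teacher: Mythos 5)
Note first that the paper does not prove this theorem: it states it and cites it as the main result of \cite{Threeinatree}, so there is no in-paper proof for you to match; your proposal is an attempt to re-derive a long and intricate structural theorem from scratch. The hard direction (constricted $\Rightarrow$ ESD) as you describe it does not go through. The inductive plan via small vertex cuts runs into a structural obstruction: the ESD axioms force the interfaces $\eta(e,v)$ and $\eta(f,v)$ of any two distinct edges $e,f$ at a common vertex $v$ of the pattern to be completely joined (this is the ``iff'' clause of the edge--edge axiom), and more generally the adjacency pattern around a vertex, edge, or triangle of $H$ is extremely rigid. An arbitrary small cut $S$ carries no such structure, so there is no canonical way to promote $S$ to a vertex or edge of a pattern, and the ``gluing'' step has no real mechanism --- this is not a bookkeeping nuisance but the heart of the difficulty. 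Worse, the remaining ``3-connected core'' case is dispatched by appealing to ``the structural machinery of \cite{Threeinatree}'', but that machinery \emph{is} the proof of the theorem you are trying to establish, so the argument is circular.

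The easy direction sketch is plausible in outline but also has a gap: the projection of an induced tree $T$ into $H$ (via $u\mapsto x(u)$) need not itself be acyclic --- it can wind around cycles of $H$ --- so one cannot assume the projected substructure has a branching vertex. Moreover, the branching vertex $b$ of the minimal subtree of $T$ spanning $z_1,z_2,z_3$ can lie in a vertex-atom $\eta(v)$, in a triangle-atom $\eta(D)$, in an interface $\eta(e,v)$, or strictly inside an edge-atom $\eta(e)$, and in each case one must separately track where the three $T$-neighbors of $b$ can sit before the ``three pairwise-joined interfaces produce a triangle'' punchline applies. Your one-paragraph chase skips this case analysis. Given all this, the right move in the present paper is exactly what the authors do: cite Theorem~\ref{stripdecomp} as a black box rather than reprove it.
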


We also need the following, which is  an immediate corollary of Lemma 6.8 of \cite{QPTAS}:
\begin{lemma} \label{threepaths}
Let $G,H$ be graphs, $Z \subseteq V(G)$ with $|Z| \geq 3$,
and let $\eta$ be an  extended strip decomposition of $(G,Z)$
with pattern $H$. Let $Q_1,Q_2,Q_3$ be paths in $G$,
pairwise anticomplete to each other, and each with an end in $Z$.
Then for every atom $A$ of $\eta$, at least one of the sets
$N[A] \cap Q_1$, $N[A] \cap Q_2$ and $N[A] \cap Q_3$ is empty.
\end{lemma}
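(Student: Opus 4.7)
The plan is to unpack Lemma~6.8 of \cite{QPTAS}, of which the present statement is asserted to be an immediate corollary. The intuition is that an extended strip decomposition $\eta$ projects $G$ onto its pattern $H$, and three pairwise anticomplete connected subgraphs of $G$ cannot all accumulate at a single atom because the ESD adjacency axioms tightly constrain how subgraphs can ``enter'' an atom.

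Concretely, I would assign to each vertex $v\in V(G)$ its unique location $\mathrm{loc}(v)\in V(H)\cup E(H)\cup T(H)$ with $v\in\eta(\mathrm{loc}(v))$, and form the incidence graph $\mathcal{I}(H)$ whose nodes are $V(H)\cup E(H)\cup T(H)$ and whose edges encode the obvious incidences (vertex--edge, vertex--triangle, edge--triangle). Bullets~4--6 of the ESD definition imply that whenever $u,v\in V(G)$ are adjacent and lie in distinct atoms, $\mathrm{loc}(u)$ and $\mathrm{loc}(v)$ are incident in $\mathcal{I}(H)$. Consequently, for any connected subgraph $F\subseteq G$, the trace $\tau(F)=\{\mathrm{loc}(v):v\in F\}$ spans a connected subgraph of $\mathcal{I}(H)$; and whenever $F$ meets $N[A]$ for an atom $A$ at location $\ell$, the trace $\tau(F)$ either contains $\ell$ or a node of $\mathcal{I}(H)$ incident with $\ell$, and does so through one of the finitely many ``portals'' $\eta(e,v)$ or triangle-intersection sets making up $\delta(A)$.

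Now suppose toward contradiction that $N[A]\cap Q_i\neq\emptyset$ for $i=1,2,3$. By the last ESD axiom, each endpoint $z_i\in Z$ sits in $\eta(e_{w_i},w_i)$ for a unique leaf $w_i$ of $H$, and these three leaves can be taken distinct since $|Z|\geq 3$. The three traces $\tau(Q_i)$ then converge from distinct starting leaf-edges onto the small incidence-neighborhood of $\ell$ in $\mathcal{I}(H)$. After shortcutting each $Q_i$ to its first arrival in $N[A]$, a pigeonhole argument over the (constantly many) portals of $A$ yields two indices $i\neq j$ for which $Q_i$ and $Q_j$ enter $A$ through a matching pair of portals at a common location $v$ of $H$; bullet~4 of the ESD then produces an edge between $Q_i$ and $Q_j$, contradicting pairwise anticompleteness.

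The main obstacle is the final portal-matching step: translating ``two traces meet in the neighborhood of $\ell$'' into ``two paths share an edge'' requires a careful case analysis over whether $\ell$ is a vertex, an edge-interior, or a triangle of $H$, with separate handling of the portal sets $\eta(e,v)$ and the triangle-intersection sets $\eta(e,u)\cap\eta(e,v)$. This bookkeeping is exactly what Lemma~6.8 of \cite{QPTAS} packages, so my plan is to cite that lemma rather than reprove it in detail.
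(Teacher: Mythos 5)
Your proposal matches the paper's approach exactly: the paper states this lemma as ``an immediate corollary of Lemma~6.8 of \cite{QPTAS}'' and gives no standalone proof, so citing that lemma is precisely what the authors do. The intuitive narrative you attach (locations, the incidence graph, the pigeonhole over portals) is plausible motivation but not needed once you defer to the citation, and indeed the paper offers no such commentary.
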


We finish this section with another lemma.
\begin{lemma}\label{atomboundary}
Let $G,H$ be graphs, $Z \subseteq V(G)$ with $|Z| \geq 2$,
and let $\eta$ be a faithful  extended strip decomposition of  $(G,Z)$
with pattern $H$. Let $A$ be an atom of $\eta$. Then $\delta(A)$ has a core of
size at most~$3$.
\end{lemma}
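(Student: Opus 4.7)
The plan is to prove Lemma~\ref{atomboundary} by case analysis on the three possible types of atom $A$. The key observation I will use repeatedly is that the adjacency axiom for vertices in distinct ``pointed'' sets is an \emph{if-and-only-if}: whenever $e\neq f$ are edges of $H$ sharing an end-vertex $v$, every vertex of $\eta(e,v)$ is adjacent in $G$ to every vertex of $\eta(f,v)$. Combined with faithfulness (each pointed set $\eta(e,v)$ contains at least one endpoint of an $e$-rung, so is non-empty whenever the edge $e$ exists) and the final axiom forcing $\eta(e,w)=\{z\}$ for every degree-one vertex $w\in W$, this will let me exhibit a small core explicitly in each case.

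For atoms of the form $A=\eta(v)$ with $v\in V(H)$, we have $\delta(A)=\bigcup_{e\ni v}\eta(e,v)$. If $\deg_H(v)\leq 1$ then $\delta(A)$ is either empty (so the empty core works) or a single vertex of $Z$ by the $W$-axiom (a core of size $1$). If $\deg_H(v)\geq 2$, I would pick two distinct incident edges $e_1,e_2$ and vertices $p_i\in\eta(e_i,v)$, which exist by faithfulness. By the complete-bipartite observation $p_1$ dominates $\eta(f,v)$ for every $f\neq e_1$ incident with $v$, and $p_2$ dominates $\eta(e_1,v)$; so $\{p_1,p_2\}$ is a core of size at most $2$.

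For atoms $A=\eta(D)$ coming from a triangle $D=v_1v_2v_3$, I set $B_{ij}=\eta(v_iv_j,v_i)\cap\eta(v_iv_j,v_j)$ so that $\delta(A)=B_{12}\cup B_{13}\cup B_{23}$, and I would pick $q_1\in\eta(v_1v_3,v_1)$, $q_2\in\eta(v_1v_2,v_1)$, $q_3\in\eta(v_1v_2,v_2)$, all non-empty by faithfulness. Applying the adjacency rule with shared vertex $v_1$ shows that $q_1$ is complete to $\eta(v_1v_2,v_1)\supseteq B_{12}$ and $q_2$ is complete to $\eta(v_1v_3,v_1)\supseteq B_{13}$; applying it with shared vertex $v_2$ shows that $q_3$ is complete to $\eta(v_2v_3,v_2)\supseteq B_{23}$. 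Hence $\{q_1,q_2,q_3\}$ is a core of size at most $3$.

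For atoms $A=\eta(e)\setminus(\eta(e,u)\cup\eta(e,v))$ arising from an edge $e=uv$, we have $\delta(A)=\eta(e,u)\cup\eta(e,v)$, and I would find one dominator on each side. For $\eta(e,u)$: if $\deg_H(u)\geq 2$ pick any other incident edge $f$ and any $q_u\in\eta(f,u)$ (non-empty by faithfulness), which is complete to $\eta(e,u)$ by the cross-edge adjacency rule; if $\deg_H(u)=1$ then $u\in W$ and $\eta(e,u)=\{z_u\}$ is itself a singleton, which dominates itself. Symmetrically one chooses $q_v$ for $\eta(e,v)$, giving a core $\{q_u,q_v\}$ of size at most $2$.

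The main subtlety is Case $2$: the vertices of each $B_{ij}$ all lie inside the same $\eta(v_iv_j)$, so the cross-edge rule gives no adjacencies internal to $B_{ij}$, and one must dominate each $B_{ij}$ from a pointed set on a \emph{different} edge of the triangle meeting the same corner. Once this is spotted, the three choices $q_1,q_2,q_3$ naturally exhaust three slots, matching the claimed bound. The only other things to track are the corner cases where some $\deg_H$ is $1$ or $0$, handled uniformly by the $W$-axiom.
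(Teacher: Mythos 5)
Your proof is correct and follows essentially the same approach as the paper: in each of the three atom types you exhibit at most three vertices, drawn from pointed sets $\eta(\cdot,\cdot)$ nonempty by faithfulness, that dominate $\delta(A)$ via the if-and-only-if adjacency axiom. The only difference is that you handle the degenerate cases (isolated $v$, or an edge with both endpoints of degree one) explicitly via the $W$-axiom, whereas the paper disposes of them more tersely; this is a cosmetic refinement, not a different argument.
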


\begin{proof}
Suppose first that $A=\eta(v)$ for some $v \in V(H)$. If $v$ has degree one in $H$, then by the definition of an extended strip decomposition,
$|\eta(v,e)|=1$ (where $e$  is the unique edge incident with $v$), and so
$|\delta(A)|=1$. Thus we may assume that $v$ is incident with at least two edges, say $e$ and $f$, in $H$. Since $\eta$ is faithful, there exist
$x \in \eta(e,v)$ and $y \in \eta(f,v)$. But now
$\delta(A) \subseteq N[\{x,y\}]$, as required.

Next assume that $A=\eta(D)$  and $D \in T(H)$ with $D=v_1v_2v_3$.
Since $\eta$ is faithful, there exist $x \in \eta(v_1v_2,v_1)$
and $y \in \eta(v_1v_3, v_1)$ and $z \in \eta(v_1v_2,v_2)$. Now $\delta(A) \subseteq N[\{x,y,z\}]$
as required.

Thus we may assume that $A=\eta(e) \setminus (\eta(e,u) \cup \eta(e,v))$ for some
edge $e$ of $H$ with ends $u,v$. We may assume that the degree of $u$ in $H$ is at least~$2$, 
let $f \neq e$ be an edge of $H$ incident with $u$.
Since $\eta$ is faithful, there exists $x \in \eta(f,u)$, and
$\eta(e,u) \subseteq N(x)$.
If $v$ has degree one in $H$, then $|\eta(e,v)|=1$, and
$\delta(A) \subseteq N[\eta(e,v) \cup \{x\}]$, as required.
Thus we may assume that there is an edge $f' \neq e$ such that $v$ is incident with $f'$. Since $\eta$ is faithful, there exists $y \in \eta(f',v)$.
Now $\delta(A) \subseteq N(\{x,y\})$, and the conclusion of the theorem holds.
\end{proof}

\section{Extended strip decompositions in  graphs in $\mathcal{M}_t^*$} \label{sec:Hfacts}

In this section we prove several results about the behavior of extended strip decompositions in $\mathcal{L}_t$-free graphs, that we will use in the proof
of Theorem~\ref{thm:domsep}.

We need a result of \cite{chuzhoy}:
\begin{theorem}[\cite{chuzhoy}]\label{subgraphtw}
	There exist positive integers $c_1$ and $c_2$ such that for every positive integer $t$, every graph with no subgraph isomorphic to a subdivision of the $(t\times t)$-wall has treewidth at most $c_1t^9\log^{c_2}t$.
\end{theorem}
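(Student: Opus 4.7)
This is a deep result of Chuzhoy (and later improvements by Chuzhoy–Tan), and I would not expect to reproduce the full argument here. However, the general strategy that has been developed for polynomial excluded-grid theorems can be outlined as follows, and this is the plan I would follow.

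The overall approach is to show the contrapositive: if $G$ has treewidth at least $k:=c_1 t^9\log^{c_2}t$, then $G$ contains a subdivision of the $(t\times t)$-wall as a subgraph. The first step is to pass from treewidth to a highly connected combinatorial object. Concretely, using the standard fact that large treewidth produces a large well-linked set (together with Menger's theorem and a tangle/bramble argument), I would extract from $G$ a set $S\subseteq V(G)$ of size $\Omega(k/\mathrm{polylog}(k))$ that is well-linked, i.e.\ for every partition $S=A\cup B$ with $|A|\le|B|$ there are $|A|$ internally vertex-disjoint paths between $A$ and $B$. This reduction is relatively cheap in parameters if one is careful.

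The core of the proof is then the iterative construction of a \emph{path-of-sets} system of large width and length, using the well-linked set as a starting routing resource. The main ingredient here is a clever ``routing and splitting'' scheme: one repeatedly finds a large collection of disjoint paths witnessing linkedness, contracts together portions that are well connected to each other, and recurses. Each iteration loses only a polylogarithmic factor in the relevant parameters, and the argument is arranged so that, after $O(\log k)$ phases, one obtains a path-of-sets system of width $\mathrm{poly}(t)$ and length $\mathrm{poly}(t)$ inside $G$. This is where the $t^9$ exponent ultimately comes from; one is fighting a polynomial loss at each routing step, and the best known careful accounting yields roughly $t^9\mathrm{polylog}(t)$.

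The final step is mostly syntactic: from a path-of-sets system of sufficient width and length, one can extract a subdivision of the $(t\times t)$-wall as a subgraph using a straightforward selection of crossbars and connecting subpaths inside the well-linked pieces. I expect the main obstacle, as in the original proof, to be the second step: controlling the polynomial loss during the iterative routing is the source of all the technical work and accounts for essentially the entire difficulty of the theorem. The first and third steps are by now standard, and one can try to shortcut them using tangle/bramble language, but the path-of-sets construction is the unavoidable hard core.
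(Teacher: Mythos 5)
The paper does not prove this theorem; it is imported as a black box from Chuzhoy and Tan (the reference \cite{chuzhoy}), so there is no in-paper argument to compare against. Your outline is an accurate high-level description of the Chuzhoy--Tan strategy (well-linked set extraction, iterative path-of-sets construction, wall extraction), and you are right both that the path-of-sets phase is where the $t^9\,\mathrm{polylog}\,t$ bound is determined and that reproducing it in full is well beyond what is expected here. For the purposes of this paper the correct ``proof'' is simply the citation, and your sketch is consistent with it.
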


Our first goal is to prove the following:
\begin{theorem} \label{smalltwH}
Let $t$ be an integer, let $G$ be an $\mathcal{L}_t$-free graph, and let $Z \subseteq V(G)$. Let $\eta$ be a faithful extended strip decomposition of $G$ with pattern $H$. Then $\tw(H) \leq c_1t^9\log^{c_2}t$, where $c_1,c_2$ are as in Theorem~\ref{subgraphtw}.
\end{theorem}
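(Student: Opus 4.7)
The plan is to argue by contrapositive: if $\tw(H) > c_1 t^9 \log^{c_2} t$, then by Theorem~\ref{subgraphtw} (applied to $H$), the graph $H$ contains a subdivision $H'$ of $W_{t\times t}$ as a (not necessarily induced) subgraph. Since every edge of $H'$ is an edge of $H$, the faithfulness of $\eta$ supplies, for each $e \in E(H')$, an $e$-rung in $\eta$. I will use these rungs to exhibit an induced subgraph of $G$ isomorphic to the line graph of a subdivision of $W_{t\times t}$, contradicting that $G$ is $\mathcal{L}_t$-free.

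For each $e = uv \in E(H')$, I select a shortest $e$-rung $R_e = p_1^e \dd \cdots \dd p_{m_e}^e$ in $G$, where $p_1^e \in \eta(e,v)$ and $p_{m_e}^e \in \eta(e,u)$. The minimality of $m_e$ yields a useful property: if $m_e \geq 2$, then $p_1^e \notin \eta(e,u)$ and $p_{m_e}^e \notin \eta(e,v)$, since otherwise the singleton $\{p_1^e\}$ (resp.\ $\{p_{m_e}^e\}$) would itself be a valid $1$-vertex $e$-rung, contradicting minimality. Let $H''$ be the subdivision of $H'$ obtained by replacing each edge $e$ with a path of exactly $m_e$ edges. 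Since $H'$ is a subdivision of $W_{t\times t}$, so is $H''$, and hence $L(H'') \in \mathcal{L}_t$.

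The main claim is that $G\bigl[\bigcup_{e \in E(H')} V(R_e)\bigr] \cong L(H'')$, via the bijection that sends $p_i^e$ to the $i$-th edge (oriented from $v$ to $u$) of the path in $H''$ subdividing $e$. Verifying this reduces to checking three types of potential adjacencies against the axioms of an extended strip decomposition: (i) within a single rung $R_e$, the induced subgraph is precisely the path $p_1^e \dd \cdots \dd p_{m_e}^e$, because a rung is by definition a path in $G[\eta(e)]$, and these consecutive-vertex adjacencies match the consecutive-edge adjacencies along the subdivided path in $L(H'')$; (ii) for edges $e, f \in E(H')$ sharing no vertex of $H$, no edges exist between $\eta(e)$ and $\eta(f)$ in $G$, matching the absence of edges between non-adjacent edges in $L(H'')$; (iii) for $e, f$ sharing a vertex $w$ of $H'$, the shortest-rung property ensures that the only rung vertices lying in $\eta(e,w)$ or $\eta(f,w)$ are the respective $w$-side endpoints, so the induced edges are precisely those between these endpoints, recovering exactly the clique structure of $L(H'')$ at $w$.

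The step I expect to be the main obstacle is the adjacency verification in~(iii), particularly handling the degenerate case $m_e = 1$ in which $p_1^e$ lies simultaneously in $\eta(e,u)$ and $\eta(e,v)$ and is thereby adjacent to rung endpoints at both ends of $e$. This, however, corresponds exactly to leaving $e$ unsubdivided in $H''$, so that the single vertex of $L(H'')$ representing $e$ is (correctly) adjacent to all edges of $H''$ incident with $u$ and with $v$. Together with the two non-degenerate cases, this produces the desired induced copy of $L(H'')$ in $G$ and completes the contradiction.
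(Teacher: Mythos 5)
Your proof is correct and follows essentially the same route as the paper's: apply Theorem~\ref{subgraphtw} to $H$, extract a wall subdivision, and realize its line graph inside $G$ via a choice of $e$-rungs. The one genuine improvement is your insistence on \emph{shortest} rungs together with the explicit adjacency verification in (i)--(iii); the paper just asserts that $\bigcup_{e} R_e$ is $L(H')$ for some subdivision $H'$ of $H$ after choosing arbitrary rungs, but with an arbitrary rung of length $\ge 2$ the endpoint $p_1^e$ could lie in both $\eta(e,u)$ and $\eta(e,v)$ and then be adjacent to rung endpoints at both ends of $e$, which does not match $L(H')$. Your shortest-rung observation (and the treatment of the degenerate $m_e=1$ case) supplies exactly the detail the paper leaves implicit, so the argument as you wrote it is complete.
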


\begin{proof}
By Theorem~\ref{subgraphtw} it is enough to show that no subgraph of $H$ is isomorphic to a subdivision of the $(t\times t)$-wall. Since $\eta$ is faithful,
for every $e \in E(H)$, we can choose
an $e$-rung $R_e$ in $\eta$. Let $G'=\bigcup_{e \in E(H)}R_e$.
Then there exists a graph $H'$, obtained from
$H$ by subdividing edges, such that $G'=L(H')$.
Now, let  $F$ be a subgraph of $H$  isomorphic to a subdivision of the
$(t\times t)$-wall. Let $G''=\bigcup_{e \in E(F)} R_e$. Then $G''$ is
 the line graph of a graph $F''$, where $F''$ is a graph obtained from $F$ by subdividing edges, contrary to the fact that $G$ is $\mathcal{L}_t$-free.
 \end{proof}

Before we embark on the proof of Theorem~\ref{thm:domsep}, we need one final lemma:

\begin{lemma} \label{lem:bigatom}
Let $t \geq 2$ be an integer. Let $G$ be an $\mathcal{L}_t$-free graph, and let $w$ be a weight function on $G$. Let $D$ be a component of $G$ with $w(D)>\frac{1}{2}$.
Let $Z \subseteq D$, and let $\eta$ be a faithful extended strip decomposition of $(D,Z)$ with pattern $H$. Assume that $w(A) \leq \frac{1}{2}$
for every atom $A$ of $\eta$. Let $c_1,c_2$ be as in Theorem~\ref{subgraphtw}.
Then there exists $Y \subseteq V(G)$
with $|Y|   \leq 3c_1t^9\log^{c_2}t$, such that $N[Y]$ is a $w$-balanced separator in $G$.
\end{lemma}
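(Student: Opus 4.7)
The plan is to use Theorem~\ref{smalltwH} to exploit $\tw(H)\le k:=c_1t^9\log^{c_2}t$, and then lift a balanced separator of $H$ to one of $G$ with a small core via Lemma~\ref{atomboundary}. Fix a tree decomposition of $H$ of width at most $k$, and push $w$ onto $V(H)$ by distributing each atom's weight among its associated features, for instance $w_H(v):=w(\eta(v))+\tfrac{1}{2}\sum_{e\ni v}w(\eta(e))+\tfrac{1}{3}\sum_{\Delta\in T(H),\,v\in\Delta}w(\eta(\Delta))$. Since the sets $\eta(x)$ partition $V(D)$, $\sum_{v\in V(H)}w_H(v)=w(D)\le 1$, and the classical balanced-separator theorem for bounded-treewidth graphs yields a bag $B\subseteq V(H)$ with $|B|\le k+1$ whose removal leaves every component of $H\setminus B$ with $w_H$-weight at most $1/2$.

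For each $v\in B$, apply Lemma~\ref{atomboundary} to the atom $\eta(v)$ to obtain $X_v\subseteq V(G)$ with $|X_v|\le 3$ and $\delta(\eta(v))\subseteq N_G[X_v]$. Set $Y:=\bigcup_{v\in B}X_v$; then $|Y|\le 3(k+1)$, which matches the claimed bound $3c_1 t^9\log^{c_2}t$ once the additive constant is absorbed into $c_1$.

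The remaining task, and what I expect to be the main obstacle, is to verify that $N_G[Y]$ is a $w$-balanced separator. Components of $G$ outside $D$ have weight at most $1-w(D)<1/2$. Because $\delta(\eta(v))\subseteq N[Y]$ for every $v\in B$, the atom $\eta(v)$ is cut off from the rest of $V(D)$ in $G\setminus N[Y]$ and contributes only components of weight at most $w(\eta(v))\le 1/2$ by the atom-weight hypothesis. The subtle case is the ``enlarged block'' attached to each component $C$ of $H\setminus B$, which additionally contains the edge atoms $\eta(vw)$ with $v\in B$ and $w\in C$ (whose $v$-side boundary is removed, but whose $w$-side is not) and the triangle atoms with exactly one vertex in $B$. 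The naive estimate that this enlarged block has weight at most $w_H(C)\le 1/2$ fails by a bounded factor because $B$-incident edge and triangle atoms are only fractionally charged to $w_H(C)$. This is resolved either by refining $w_H$ so that every $B$-incident edge and triangle atom's full weight is charged to its $C$-side (which raises $\sum w_H$ to a bounded constant and is then compensated by iterating the balanced-separator step a constant number of times), or by augmenting $Y$ with a constant number of further cores supplied by Lemma~\ref{atomboundary} applied to the $B$-incident edge and triangle atoms, so that those atoms also become isolated components of weight at most $1/2$. Either route inflates $|Y|$ by only a constant factor, which is absorbed into $c_1$.
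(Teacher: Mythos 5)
Your high-level plan — bound $\tw(H)$ via Theorem~\ref{smalltwH}, lift a balanced separator of $H$ to $G$, and control the core size via Lemma~\ref{atomboundary} — is the right skeleton, but the weight-transfer step you propose leaves a gap that you correctly flag but do not resolve. Pushing each atom's weight fractionally onto $V(H)$ (splitting $w(\eta(e))$ evenly across the two ends, and $w(\eta(\Delta))$ across the three corners) means that an edge or triangle atom straddling the bag $B$ and a component $C$ of $H\setminus B$ is only fractionally charged to $w_H(C)$; the resulting ``enlarged block'' can therefore have $w$-weight up to a factor of $2$ or $3$ beyond $w_H(C)\le 1/2$, so removing $N[\bigcup_{v\in B}X_v]$ does not obviously produce a balanced separator. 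Neither of your proposed patches closes this gap: inflating $w_H$ so that straddling atoms are fully charged to their $C$-side makes $\sum_v w_H(v)$ a constant $>1$, and then the bag-based balanced separator only guarantees component weight $\le \tfrac{1}{2}\sum_v w_H(v)$, which is no longer $\le 1/2$; and ``a constant number of further cores'' is not available, because a bag $B$ of size $k+1$ can be incident to $\Omega(|V(H)|)$ edges of $H$ (e.g.\ a star with center in $B$), so the number of $B$-straddling edge and triangle atoms is unbounded.

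The paper sidesteps this by working with an auxiliary graph $H'$ rather than $H$ itself: every edge of $H$ is subdivided once (giving $v_e$), every $v\in V(H)$ gets a pendant $v_v$, every triangle $T$ gets a new vertex $v_T$, and each component of $G\setminus D$ gets an isolated vertex. The weight function $w'$ on $H'$ assigns to each auxiliary vertex the \emph{exact} weight of the corresponding atom ($w'(v_v)=w(\eta(v))$, $w'(v_e)=w$ of the edge-atom interior, $w'(v_T)=w(\eta(T))$), and to each original $v\in V(H)$ the total interface weight $\sum_{e\ni v}w(\eta(e,v))$. Then $w'$ is normal and $\tw(H')\le\max\{\tw(H),3\}$, so a $w'$-balanced separator $X'\subseteq V(H')$ of size $\le c_1t^9\log^{c_2}t$ exists; each element of $X'$ contributes a boundary set with a core of size $\le 3$ by Lemma~\ref{atomboundary}, giving $|Y|\le 3|X'|$. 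Because each atom owns a distinct vertex of $H'$, the separator $X'$ decides for each atom individually whether its boundary is deleted, which is exactly the bookkeeping your $w_H$ on $V(H)$ cannot perform. If you want to repair your argument, replacing $H$ by $H'$ and your $w_H$ by this $w'$ is the fix; as written, the proposal does not prove the lemma.
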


\begin{proof}
By working with the normalized weight function of $w$, we may assume that $w$ is normal.
Let $H'$ be obtained from $H$ as follows. Subdivide every edge $e$ of $H$ once;
call the new vertex $v_e$.
For every $v \in V(H)$, add a new vertex $v_v$ adjacent to $v$
and with no other neighbors. For every triangle $T=uvw$ of $H$,
add a vertex $v_T$ adjacent to $u,v,w$ with no other neighbors (see Figure~\ref{fig:modified pattern}).
Observe that vertices of $V(H') \setminus V(H)$ correspond to atoms of
$\eta$.
For every component $D'$ of $G \setminus D$, add a new isolated vertex
$v_{D'}$.
\begin{figure}[ht]
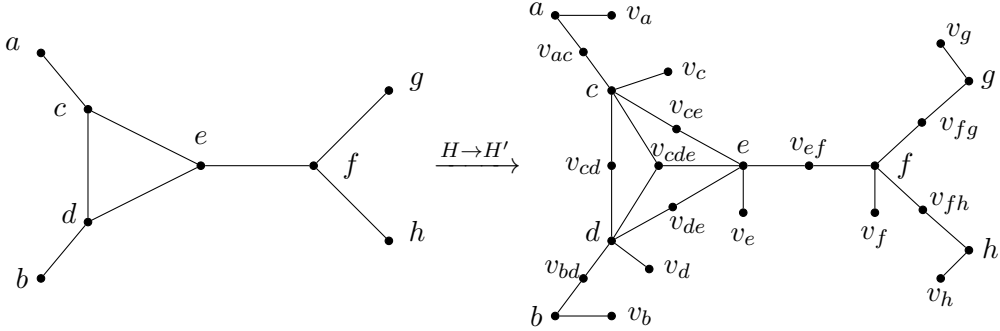

    \centering
    $\tikzfig{pattern} \xrightarrow{H\rightarrow H'} \tikzfig{pattern_prime}$
    \caption{Example of $H'$ given an $H$.}
    \label{fig:modified pattern}
\end{figure}

Now define a weight function $w'$ on $H'$.
For every $v \in V(H)$, let $w'(v)= w(\delta(\eta(v)))$, that is,
$w'(v)=
\sum_{e \in E(H) \text{ incident with }v}w(\eta(e,v))$, and let $w'(v_v)=w(\eta(v))$.
For every $e \in E(H)$ with ends $u,v$,
let $w'(v_e)=w(\eta(e)  \setminus (\eta(e,v) \cup \eta(e,u)))$.
For every triangle $T$ of $H$, let $w'(v_T)=w(\eta(T))$.
For every component $D'$ of $G \setminus D$, let $w'(v_{D'})=w(D')$.
Now $w'$ is a normal function on $H'$.

By Theorem~\ref{smalltwH} $\tw(H) \leq c_1t^9\log^{c_2}t$.
Since $H'$ is obtained from $H$ by subdividing edges and adding vertices
whose neighborhood is a clique of size at most three, it is easy to see that $\tw(H') \leq \max \{\tw(H)$, 3\}, and so $\tw(H') \leq c_1t^9\log^{c_2}t$.
By a result from~\cite{zbMATH06707229}, it follows that there exists a $w'$-balanced separator  $X' \subseteq V(H')$ with $|X'| \leq c_1t^9\log^{c_2}t$.
We may assume that for every component $D'$ of $G \setminus D$,
$v_{D'} \not \in X'$.
Next, we use $X'$ to obtain a $w$-balanced separator $X$ in $G$.
First, for every $v \in X' \cap V(H)$, add to $X$ the set
$\bigcup_{e \in E(H) \text{ incident with }v}\eta(e,v)$.
Second, for every $e \in E(H)$ with ends $u,v$  such that $v_e \in X'$,
add to $X$ the boundary of the atom $\eta(e) \setminus (\eta(e,u) \cup \eta(e,v))$.
Third, for every $v \in V(H)$ such that $v_v \in X'$,
add to $X$ the boundary of the atom $\eta(v)$.
Finally, for every triangle $T=uvw$ of $H$  with $v_T \in X'$,
add to $X$ the boundary of the atom $\eta(T)$.
This completes the construction of $X$.

Since by Lemma~\ref{atomboundary} for every atom $A$ of $\eta$,
$\delta(A)$ has a core of size at most
three, and since for every $v \in V(H)$,  the set $\bigcup_{e \in E(H) \text{ incident with }v}\eta(e,v)$ has a core of size at most two, it follows that
there exists $Y \subseteq V(H)$ with $|Y| \leq 3 c_1t^9\log^{c_2}t$
such that $X \subseteq N[Y]$.

It remains to show that $X$ is a $w$-balanced separator in $G$.
Suppose not, and let $C$ be a component of $G \setminus X$ with $w(C)>\frac{1}{2}$.
Let $U \subseteq V(H) \cup E(H) \cup T(H)$ be such that $u \in U$
if and only if one of the following holds:
\begin{itemize}
\item $\eta(u) \cap C \neq \emptyset$, or
\item $u \in V(H)$ and there exists $e \in E(H)$ incident with $u$ such that
$\eta(e,u) \cap C \neq \emptyset$.
\end{itemize}
Let $f:V(H) \cup E(H) \cup T(H) \rightarrow V(H') \setminus V(H)$,
where $f(x)=v_x$, and let $U'=(U \cap V(H)) \cup f(U)$.
Then $U' \subseteq V(H')$.

\sta{$U' \cap X' = \emptyset$. \label{U'X'}}

Suppose that there exists $u' \in U' \cap X'$.
Define $u\in U$ as follows.
If $u'\in U\cap V(H)$, let $u = u'$.
If $u'\in f(U)$, let $u\in U$ be such that $u' = v_{u}$.
Assume that $\eta(u)\cap C=\emptyset$.
Since $u'\in U'$, it follows that $u  = u'$, and there exists $e \in E(H)$ incident with $u$ such that $\eta(e,u) \cap C \neq \emptyset$.
However, $\eta(e,u) \subseteq X$ since $u'\in X'$, contrary to the fact that $C \cap X = \emptyset$. This proves that $\eta(u) \cap C \neq \emptyset$ for every $u' \in U'\cap X'$.
Assume first that $u'=v_e$ for some edge $e \in E(H)$ with ends $x,y$.
Then $u = e$ and $\eta(e,x) \cup \eta(e,y) \subseteq X$, and so, since $C$ is connected and $C\cap \eta(e)\neq\emptyset$, it follows that  $C \subseteq \eta(e) \setminus (\eta(e,x) \cup \eta(e,y))$.
But then $C$ is a subset of an atom  of $\eta$, and so $w(C) \leq \frac{1}{2}$, a contradiction.
It follows that $u'=v_x$ for some $x \in V(H) \cup T(H)$.
Then $\delta(\eta(x)) \subseteq X'$, and so, since $C$ is connected, it follows that  $C \subseteq \eta(x)$. Again
$C$ is a subset of an atom  of $\eta$, and so $w(C) \leq \frac{1}{2}$, a contradiction.
This proves~\eqref{U'X'}.
\\
\\
Since $C$ is connected, we deduce that $U'$ is connected, and therefore, by
\eqref{U'X'},  $U'$ is contained in a component of $H' \setminus X'$.
But $w(C) \leq \sum_{u \in U'} w'(u)$, contrary to the fact that $X'$
is a $w$-balanced separator of $H'$.
\end{proof}

\section{Bounded core separators in graphs in $\mathcal{M}_t^*$}

\label{sec:domsep}
We are now ready to complete the  first step in the proof of Theorem~\ref{thm claw}, that is, Theorem \ref{thm:domsep}, which we restate.

\domsep*

\begin{proof}
We may assume that $t \geq 2$.
Let $G \in \mathcal{M}^*_t$ and let $w$ be a weight function on $G$.
By working with the normalized function of $w$, we may assume that $w$ is normal.
Let $c_1,c_2$ be as in Theorem~\ref{subgraphtw}.
Let $d=3c_1t^9\log^{c_2}t+22t$.
We will show that there is a set $Y \subseteq G$ with $|Y|<d$ such that
$N[Y]$ is a $(w,\frac{1}{2})$-balanced separator in $G$. Suppose no such $Y$
exists.

By the proof of Lemma~5.3 of \cite{QPTAS} there is a path $P$ in $G$ such that $N[P]$ is a $w$-balanced separator in $G$. Let $P=p_1 \dd \dots \dd p_k$, and assume that $P$ was chosen with $k$ minimum. It follows that there exists a component $B$ of
$G \setminus N[P \setminus \{p_k\}]$ such that $w(B) > \frac{1}{2}$. Let
$N=N(B)$. Then $N \subseteq N(P \setminus \{p_k\})$.
First, we show

\sta{There is no $Y \subseteq G$ with  $|Y|<d$ such that $N \cup N[p_k]  \subseteq N[Y]$. \label{goodY}}

Suppose such $Y$ exists. We will show that $N[Y]$ is a $w$-balanced separator in $G$. We may assume that there is a component  $D$ of  $G \setminus N[Y]$ with
$w(D)>\frac{1}{2}$. Since $w(B)>\frac{1}{2}$, we deduce that
$D \cap B \neq \emptyset$.  Since $N \subseteq N[Y]$, it follows that
$D \subseteq B$, and so $D \cap N[P] \subseteq N[p_k]$. Since
$N[p_k] \subseteq N[Y]$, we deduce that $D$ is contained in a component of $G \setminus N[P]$, and therefore $w(D) < \frac{1}{2}$,
a contradiction. This proves that  $N[Y]$ is a $w$-balanced separator in $G$,
contrary to our assumption, and \eqref{goodY} follows.
\\
\\
Let $a,b\in \mathbb{Z}_{\ge 0}$ such that $k=2at+b$ and $b<2t$. For $i \in \{1, \dots, a-1\}$ let $P_i=p_{2(i-1)t+1} \dd \dots \dd p_{2it}$, and let $P_a=p_{2(a-1)t+1} \dd \dots \dd p_k$.
Let $Y_1=P_1 \cup   P_a$. Then $|Y_1| < 6t$.
Let $N_1=N \setminus N[Y_1]$. We deduce from  \eqref{goodY}:

\sta{There is no $Y \subseteq G$ with  $|Y| \leq d-6t$ such that $N_1  \subseteq N[Y]$. \label{N1}}

In the next several arguments, we will treat the two ends of $P$ symmetrically
since we will only use the property that $N(B) \subseteq N(P \setminus \{p_k\})$. We will state explicitly when additional properties of $P$ come into play, and stop using this symmetry. We call $v \in N_1$ a
{\em hat} if $v$ has exactly two neighbors in $P$, and these neighbors are consecutive in $P$.
Let $H_1 \subseteq N_1$ be the set of all hats. Our next goal is to reduce the problem to the case when $N_1=H_1$.
\\
\\
\sta{For every $v \in N_1 \setminus H_1$, $\alpha (N(v) \cap P) \geq 3$. \label{smallcases}}

Suppose that there is $v \in N_1 \setminus H_1$ with $\alpha(N(v) \cap P) \leq 2$.
Let $r$ be minimum and $s$ be maximum such that $v$ is adjacent to $p_r,p_s$.
Since $v \in N_1$, it follows that $r>2t$ and $s \leq k-2t$.
Since $\alpha(N(v) \cap P) \leq 2$, we deduce that
$N(v) \cap P \subseteq \set{p_r,p_{r+1},p_{s-1},p_s}$.
Let $R=p_{r-2t+1} \dd P \dd p_{r+2t+1}$  and let $S=p_{s-2t} \dd P \dd p_{s+2t}$.
Let $Z=R \cup S$. Then $|Z| \leq 2(4t+2) \leq  d- 6t$, and so there
exists $w \in N_1 \setminus N(Z)$.

Since $v,w$ are both in $N_1$, there is a path $Q$ from $v$ to $w$ with
$Q^* \subseteq B$. Let $i$ be minimum and $j$ be maximum such that $w$ is adjacent to $p_i,p_j$. Since $w \in N_1$, it follows that $i>2t$ and $j \leq k-2t$.

Suppose first that $r=s$. Since  $w$ is anticomplete to $R$, it follows that $p_i  \not \in R$. Now we get an $S_{t,t,t}$ with center $p_r$  two of whose paths are subpaths of   $R^*$ and the third is a subpath of $p_r \dd v \dd Q \dd w \dd p_i  \dd P \dd p_{i-t+3}$, a contradiction.

This proves that $r \neq s$, and since $v \not \in H_1$, it follows that
$s>r+1$. Now  we get  an $S_{t,t,t}$ with center $v$ whose   paths are
$v \dd p_r \dd R \dd p_{r-t+1}$, $v \dd p_s \dd S \dd p_{s+t-1}$ and
a subpath of $v \dd Q \dd w \dd p_i \dd P\dd p_{i-t+2}$, again a contradiction.
This proves \eqref{smallcases}.
\\
\\
\sta{There do not exist $1<i<j<\ell<a$ and $v \in N_1$ such that
$v$ has a neighbor in $P_i$ and a neighbor in $P_\ell$, and $v$
is anticomplete to $P_j$. \label{noskip}}
\begin{figure}[ht]
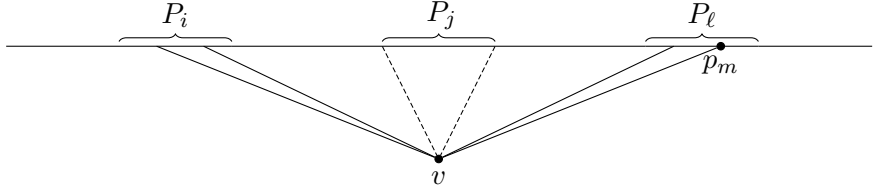

    \centering
    \tikzfig{d_separable_non_consecutive}
    \caption{Visualization for \eqref{noskip}.}
    \label{fig: no skip}
\end{figure}

Suppose such $i,j,\ell,v$ exist (see Figure~\ref{fig: no skip}).
Then $v \not \in H_1$. By \eqref{smallcases}
we may assume that $v$ has two non-adjacent neighbors in $p_1 \dd P \dd p_{2t(j-1)}$. It follows that there exist subpaths $Q,R$ with $|Q|=|R|=t$ of
$p_1 \dd P \dd p_{2t(j-1)+t}$ and anticomplete to each other such that
$v$ is adjacent to exactly one end of $Q$ and has no other neighbors in $Q$,
and $v$ is adjacent to exactly one end of $R$ and has no other neighbors in
$R$. Let $m$ be maximum such that $v$ is adjacent to $p_m$. Since
$v \in N_1$, it follows that $m \leq k-2t$. Now we get an $S_{t,t,t}$ with
center $v$ and path $v \dd Q$, $v \dd R$ and $v \dd p_m \dd P \dd p_{m+t-1}$,
a contradiction. This proves~\eqref{noskip}.
\\
\\
\sta{There exist $i,j \in \{2, \dots, a-1\}$ such that $N_1 \setminus H_1 \subseteq N(P_i) \cup N(P_j)$. \label{nonhats}}

Let $\mathcal{I} \subseteq \{2, \dots, a-1\}$ be such that $N_1 \setminus H_1 \subseteq \bigcup_{i \in \mathcal{I}}N(P_i)$ and with $|\mathcal{I}|$ minimum. We may assume that
$|\mathcal{I}| \geq 3$; let $i,j,\ell \in \mathcal{I}$ with $i<j<\ell$.
By the minimality of $|\mathcal{I}|$, there exist $v_i,v_\ell \in N_1 \setminus H_1$ such that
$v_i \subseteq N(P_i) \setminus N(P_j)$ and  $v_\ell \subseteq N(P_\ell) \setminus N(P_j)$. By \eqref{noskip}, we have that $v_i$ is anticomplete to
$\bigcup_{m \geq j}P_m$ and $v_\ell$ is anticomplete to $\bigcup_{m \leq j}P_m$.
Since both $v_i,v_\ell \in N_1$, there is a path $Q$ from $v_i$ to $v_\ell$ with
$Q^* \subseteq B$. Let $r$ be minimum and $s$ be maximum such that
$v_i$ is adjacent to $p_r,p_s$.
Then $s \leq 2t(j-1)$ and, by \eqref{smallcases}, $r+1<s$. Since $v_i \in N_1$, it follows that
$r > 2t$. Let $q$ be minimum such that $v_\ell$ is adjacent to $p_q$. Then
$q>2tj$.
 Now we get an $S_{t,t,t}$ with center $v_i$
and whose paths are $v_i \dd p_r \dd P \dd p_{r-t+1}$, $v_i \dd p_s \dd P \dd p_{s+t-1}$,
and a subpath of $v_i \dd Q \dd v_\ell  \dd p_q \dd P \dd p_{q-t+2}$, a contradiction.
This proves~\eqref{nonhats}.
\\
\\
By \eqref{nonhats}, there exist $i', j' \in \{2, \dots, a-2\}$ (possibly $i'=j'$) such that $N_1 \setminus H_1 \subseteq N(P_{i'}) \cup N(P_{j'})$.
Let $Y_2=P_{i'} \cup P_{j'}$. Then $|Y_1 \cup Y_2| < 10t$ and $N_1 \setminus H_1 \subseteq N(Y_1 \cup Y_2)$. Let $H_2=H_1 \setminus N(Y_1 \cup Y_2)$. By \eqref{N1},
$H_2 \neq \emptyset$.

From now on we  will use additional properties of $P$, so we no longer
use symmetry between its two ends.
Let $x$ be minimum such that there exists $h_0 \in H_2$ with $N(h_0) \cap P=\{p_x,p_{x+1}\}$.
Let $y$ be maximum  such that there exists $h_1 \in H_2$ with $N(h_1) \cap P=\{p_{y},p_{y+1}\}$; we refer to this later as the {\em  maximality of $h_1$}.
(See Figure~\ref{fig:h_0 h_1}.)

    \begin{figure}[ht]
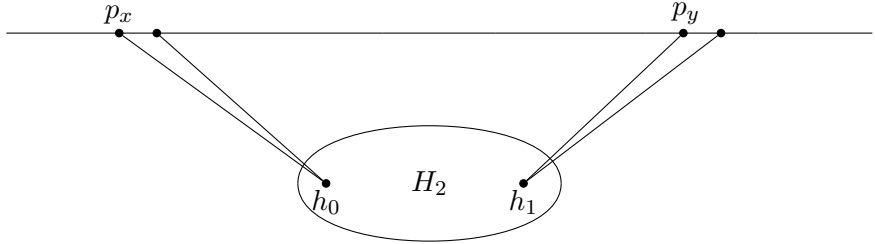

        \centering
    \tikzfig{d_separable_h_0_h_1}
    \caption{Visualization of $h_0$ and $h_1$.}
    \label{fig:h_0 h_1}
\end{figure}

For all $h \in H_2$, we define a subpath $P(h)$ of $P$ as follows: $P(h)=p_{i-2t-1} \dd P \dd p_{i+2t+2}$, where $N(h) \cap P=\{p_i,p_{i+1}\}$.
Let $H_3 = H_2\setminus N(P(h_0))$.

\sta{$H_3\neq \emptyset$. \label{H3}}

Suppose $H_3 = \emptyset$.
Let $Y = Y_1\cup Y_2\cup P(h_0)$.
Now $|Y| < 16t < d$ and $N\cup N[p_k]\subseteq N[Y]$, contrary to \eqref{goodY}.
This proves~\eqref{H3}.
\\
\\
Our next goal is to define, for every $h\in H_3$, a graph $G_h$, a triple $(z_1(h),z_2(h),z_3(h))$ of vertices of $G_h$, and an extended strip decomposition $\eta_h$ of $(G_h,\{z_1(h),z_2(h),z_3(h)\})$.
So let $h\in H_3$ and let $N(h) \cap P=\{p_i,p_{i+1}\}$.
Let $z_1(h)=p_{i-2t-1}$, $z_2(h)=p_{i+2t+2}$ and $z_3(h)=h$.
Write  $P_L(h)=p_1 \dd P \dd p_{i-2t-2}$ and
$P_R(h)=p_{i+2t+3} \dd P \dd p_k$.
We define
\[G_h' = (G \setminus N[Y_1 \cup Y_2 \cup P(h_0) \cup P(h)]) \cup (P_L(h)  \cup P_R(h) \cup (N(p_k) \cap B) \cup \{ z_1(h), z_2(h), z_3(h)\})\,.\]
For $i \in \{1,2,3\}$ write $z_i=z_i(h)$.
Then $z_1,z_2,z_3 \in G_h'$, and $B \subseteq G_h'$.
Let $G_h$ be the component of $G_h'$ containing $B$.
Then $z_3\in G_h$.
Since $p_k\in G_h'$ and $p_k$ has a neighbor in $B$, it follows that $p_k\in G_h$, and consequently $z_2\dd P \dd p_k\subseteq G_h$.
Since $h_0$ has a neighbor in $B$,
it follows that $h_0\in G_h$; and, since $h\in H_3$, we deduce that $p_1\dd P \dd z_1\subseteq G_h$. (See Figure~\ref{fig:Gh}.)

\begin{figure}[ht]
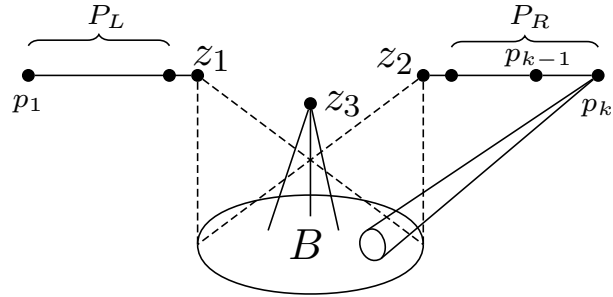

    \centering
    \scalebox{1.5}{\tikzfig{d_separable_gh} }
    \caption{Visualization of $G_h$ (here dashed lines represent non-edges).}
    \label{fig:Gh}
\end{figure}

\sta{$\{z_1,z_2,z_3\}$ is constricted in $G_h$. \label{constricted}}

Suppose there is a tree $T$ in $G_h$ such that $z_1,z_2,z_3 \in V(T)$.
We may choose $T$ minimal with this property; then either $T$ is
a subdivision of $K_{1,3}$ and $z_1,z_2,z_3$ are leaves on $T$,
or $T$ is a path with ends in $\{z_1,z_2,z_3\}$.  Since
$G_h \setminus \{z_1,z_2,z_3\}$ is anticomplete to $P(h)$, in both cases $T \cup P(h)$ contains
$S_{t,t,t}$, a contradiction. This proves~\eqref{constricted}.
\\
\\
By Theorem~\ref{stripdecomp}, there is a graph $H(h)$ such that
$(G_h, \{z_1,z_2,z_3\})$ admits a
faithful extended strip decomposition $\eta_h$ with pattern $H(h)$.

\sta{There exists an atom $A(h)$ of $\eta_h$ such that $w(A(h))>\frac{1}{2}$.
\label{bigatom}}

Suppose not.
Then by Lemma~\ref{lem:bigatom} applied to $G_h$ and $w$ (with $D = G_h$),
we deduce that there exists $Y_h \subseteq G_h$
with $|Y_h| \leq d-22t$, such that $N[Y_h]$ is a $w$-balanced separator in
$G_h$. It follows from the definition of $G_h$
that $N[Y_h \cup Y_1 \cup Y_2 \cup P(h_0) \cup P(h)]$ is a $w$-balanced separator in $G$,
which is a contradiction since $|Y_1 \cup Y_2 \cup P(h_0) \cup P(h)| < 22t$. This proves~\eqref{bigatom}.
\\
\\
Let $A(h)$ be as in \eqref{bigatom}.

\sta{At least one of the sets $P_L(h) \cap A(h)$, $P_R(h) \cap A(h)$
is empty. \label{notbothsides}}

Suppose not. Let $Q_1=z_1 \dd p_{i-2t-2} \dd P_L(h)$. Since $N[P]$ is a
$w$-balanced separator, there exists $m \in N(p_k) \cap B$;
let $Q$ be a path from $h$ to $m$ with $Q^* \subseteq B$.
Then $F=z_2 \dd p_{i+2t+3} \dd P_R(h) \dd p_k \dd m \dd Q \dd h$
is a path. Since $P_R(h) \cap A(h) \neq \emptyset$,
there exists $f \in A(h) \cap F$. It follows from the last bullet of the definition of an
extended strip decomposition that $z_2, z_3$ do not belong to any atoms, and so $f \not \in \{z_2,z_3\}$. Let $Q_2'$ be the subpath of $F$ from $z_2$ to $f$, and let $Q_3'$ be the subpath of $F$
from $z_3$ to $f$ (see  Figure~\ref{fig: notbothsides}).
Let $Q_2=Q_2' \setminus f$ and $Q_3=Q_3' \setminus f$.
Then $Q_1,Q_2,Q_3$ are pairwise disjoint and anticomplete to each other;
$z_i$ is an end of $Q_i$, and  $Q_i \cap N[A] \neq \emptyset$
for every $i \in \{1,2,3\}$, contrary to Lemma~\ref{threepaths}.
This proves~\eqref{notbothsides}.
\begin{figure}[ht]
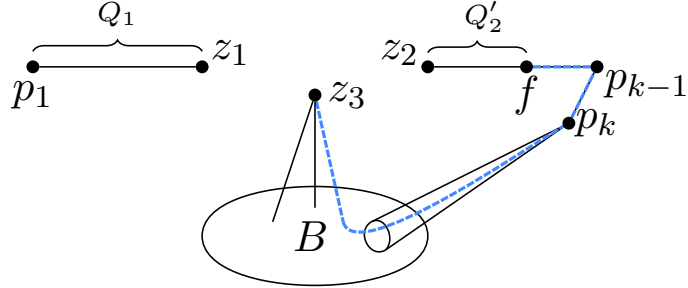

    \centering
    \scalebox{1.5}{\tikzfig{d_separable_notbothsides} }
    \caption{Visualization of $Q_1,Q_2'$ and $Q_3'$ (in blue).}
    \label{fig: notbothsides}
\end{figure}
\\
\\
Let $\delta(h)$ be the boundary of $A(h)$ in $\eta_h$ and let $\gamma(h)=\delta(h) \cap P$.

\sta{$|\gamma(h)| \leq 9$. \label{pathboundary}}

By Lemma~\ref{atomboundary} there exists $\Delta \subseteq G_h$ with
$|\Delta| \leq 3$ such that $\delta(h) \subseteq N[\Delta]$.
Since $N(P) \cap G_h \subseteq H_3$, for every $v \in \Delta$,
$|N[v] \cap P| \leq 3$. Consequently,
$|\delta(h) \cap P| \leq |N[\Delta] \cap P| \leq 9$, and \eqref{pathboundary} follows.
\\
\\
\sta{Let $Z \subseteq V(G)$ with $Y_1 \cup Y_2 \cup P(h_0) \cup P(h) \subseteq Z$  and such that $\delta(h) \subseteq N[Z]$. Let
$D \subseteq G \setminus N[Z]$ be connected with $w(D)>\frac{1}{2}$. Then
$D \subseteq A(h)$ and  there exists $v \in D \cap N(B)$.
\label{meet}}

Since $w(B)>\frac{1}{2}$, it follows that $B \cap D \neq\emptyset$.
Similarly, since $w(A(h))>\frac{1}{2}$, $A(h) \cap D \neq \emptyset$.
Since $Y_1 \cup Y_2 \cup P(h_0) \cup P(h) \subseteq Z$ and since $\delta(h) \subseteq N[Z]$, it follows that
$N_G(A(h)) \subseteq N[Z]$.
We deduce that  $D \subseteq A(h)$.
Since $p_k \in Y_1$,
and since $N[P]$ is a balanced separator in $G$, it follows that
$D \setminus B \neq \emptyset$. Since $D$ is connected, there exists
$v \in D \setminus B$ with a neighbor in $B$.
This proves~\eqref{meet}.
\\
\\
In view of \eqref{H3}, let $x'$ be minimum such that there exists $h_0'\in H_3$ with $N(h_0') \cap P=\{p_{x'},p_{x'+1}\}$; we refer to this as the {\em  minimality of $h_0'$}.

    \sta{$P_R(h_0') \cap A(h_0') \neq \emptyset$. \label{startright}}

Suppose that $P_R(h_0') \cap A(h_0') =\emptyset$.  Let $Y_3$ be a core of $\delta(h_0')$; choose $Y_3$ with
$|Y_3|$ minimum. By Lemma~\ref{atomboundary} $|Y_3| \leq 3$. Let $Y_4=\gamma(h_0')$; by \eqref{pathboundary} $|Y_4| \leq 9$.

Let $Z = Y_1 \cup Y_2 \cup Y_3 \cup Y_4 \cup P(h_0)\cup P(h_0')$.
We claim that $N[Z]$ is a balanced separator in $G$.
Suppose not, and let $D$ be a component of $G \setminus N[Z]$ with
$w(D) > \frac{1}{2}$. By \eqref{meet},  $D \subseteq A(h_0')$ and
there exists
$v \in D \setminus B$ with a neighbor in $B$. Then $v \in N[P]$;
let $v' \in P$ be a neighbor of $v$.
Since $D \subseteq A(h_0')$, we deduce that $v \in A(h_0')$,
and so $v' \in N[A(h_0')] \cap P$. Since
$Y_1 \cup Y_2 \cup P(h_0') \subseteq Z$, and by the minimality of
$h_0'$, we deduce that $v' \in P_R(h_0')$.
Since $P_R(h_0') \cap A(h_0') = \emptyset$, we conclude that
$v' \in N_{G_{h_0'}}(A(h_0')) \cap P \subseteq \delta(h_0') \cap P=\gamma(h_0')$.
But then $v' \in Y_4$, and so $v \in N[Z]$, contrary to the fact that
$v \in D$. This proves~\eqref{startright}.
\\
\\
\sta{$P_L(h_1) \cap A(h_1) \neq \emptyset$. \label{endleft}}

The proof is similar to the proof of \eqref{startright}.
Suppose that $P_L(h_1) \cap A(h_1) =\emptyset$.
Let $Y_3$ be a core of $\delta(h_1)$; choose $Y_3$ with
$|Y_3|$ minimum. By Lemma~\ref{atomboundary} $|Y_3| \leq 3$. Let $Y_4=\gamma(h_1)$; by \eqref{pathboundary} $|Y_4| \leq 9$.

Let $Z=Y_1 \cup Y_2 \cup P(h_0)\cup Y_3 \cup Y_4 \cup P(h_1)$.
We claim that $N[Z]$ is a balanced separator in
$G$.
Suppose not, and let $D$ be a component of $G \setminus N[Z]$ with
$w(D) > \frac{1}{2}$. By \eqref{meet},
$D \subseteq A(h_1)$ and there exists
$v \in D \setminus B$ with a neighbor in $B$. Then $v \in N[P]$;
let $v' \in P$ be a neighbor of $v$.
Since $D \subseteq A(h_1)$, we deduce that $v \in A(h_1)$,
and so $v' \in N[A(h_1)] \cap P$.
Since $Y_1 \cup Y_2 \cup P(h_0) \cup P(h_1) \subseteq Z$, and by the maximality of
$h_1$, we deduce that $v' \in P_L(h_1)$.
Since $P_L(h_1) \cap A(h_1) = \emptyset$, we conclude that
$v' \in N_{G_{h_1}}(A(h_1)) \cap P \subseteq \delta(h_1) \cap P=\gamma(h_1)$.
But then $v' \in Y_4$, and so $v \in N[Z]$, contrary to the fact that
$v \in D$. This proves~\eqref{endleft}.
\\
\\
By \eqref{notbothsides} and \eqref{startright} $P_L(h_0') \cap A(h_0') = \emptyset$.
In view of this, let $i$ be maximum such that there exist $h_2 \in H_3$ with
$P_L(h_2) \cap A(h_2) = \emptyset$  and $N(h_2) \cap P=\{p_i,p_{i+1}\}$.
By Lemma~\ref{atomboundary} there is a core $Z_2$ for $\delta(h_2)$
with $|Z_2| \leq 3$.
By \eqref{endleft}, $i<y$, and therefore, since $y\in H_3$, there exists $j>i$ such that there exists $h_3 \in H_3$ with $N(h_3) \cap P =\{p_j,p_{j+1}\}$; we may assume that $j$ is chosen minimum with this property.
Then
$P_L(h_3)  \cap A(h_3) \neq  \emptyset$, and therefore  by
\eqref{notbothsides} $P_R(h_3)  \cap A(h_3)  = \emptyset$.
By Lemma~\ref{atomboundary} there is a core $Z_3$ for $\delta(h_3)$
with $|Z_3| \leq 3$. Let
$Z=Y_1 \cup Y_2 \cup P(h_0) \cup Z_2 \cup Z_3 \cup P(h_2) \cup P(h_3) \cup \gamma(h_2) \cup \gamma(h_3)$. Then $|Z| < d$. To complete the proof, it remains to show that
$N[Z]$ is a $w$-balanced separator in $G$.

Suppose not, and let $D$ be a component of $G \setminus N[Z]$ with
$w(D) > \frac{1}{2}$. By \eqref{meet},
$D \subseteq A(h_2) \cap A(h_3)$ and there exists
$v \in D \setminus B$ with a neighbor in $B$. Then $v \in N[P]$, and therefore
$v \in H_3$; let $v' \in P$ be a neighbor of $v$.
Since $D \subseteq A(h_2) \cap A(h_3)$, we deduce that $v \in A(h_2) \cap A(h_3)$, and so $v' \in N[A(h_2)] \cap N[A(h_3)] \cap P$.

It follows from the choice of $h_3$ that
$N(D) \cap P \subseteq P_L(h_2) \cup P_R(h_3) \cup P(h_2) \cup P(h_3)$,
and therefore $v' \in  P_L(h_2) \cup P_R(h_3) \cup P(h_2) \cup P(h_3)$.
Since $P_L(h_2) \cap A(h_2) = \emptyset$ and $P_R(h_3) \cap A(h_3)=\emptyset$,
and $Y_1 \cup Y_2 \cup P(h_0)\cup P(h_2) \cup P(h_3)  \subseteq Z$,
we deduce that $v' \not \in A(h_2) \cap A(h_3)$, and therefore either $v' \in N_{G_{h_2}}(A(h_2)) \cap P\subseteq \delta(h_2) \cap P=\gamma(h_2)$ or $v' \in N_{G_{h_3}}(A(h_3)) \cap P \subseteq \delta(h_3) \cap P=\gamma(h_3)$.
But then $v' \in \gamma(h_2) \cup \gamma(h_3)$, and so $v \in N[Z]$, contrary to the fact that $v \in D$.
\end{proof}

\section{Large stable sets in neighborhoods}
\label{sec:alpha_of_neighborhood}
For positive integers $a,b$ let $R(a,b)$ be the smallest integer $R$ such that every graph on $R$ vertices  contains either a stable set of size $a$ or a clique of size $b$.
The $2$-subdivision of a graph $H$, denoted by $H^{(2)}$, is the graph obtained by subdividing each edge in $H$ twice (so each edge of $H$ is replaced by a three-edge path). In particular, $K_{\gamma}^{(2)}$ is the $2$-subdivision of the complete graph on $\gamma$ vertices.

Consider a  graph $G$ that is $K_{\gamma}^{(2)}$-free and $K_{t,t}$-free. We will show that for every subset $Y \subseteq V(G)$ with large independence number, the set of vertices $Z$ with reasonably large independent sets in their neighborhoods in $Y$ has small independence number, as follows:

\begin{theorem}\label{few big independent neighborhoods in big independent set}
Let $C,\gamma, t \in \nat$ such that $C,\gamma \geq 2$, and let $G$ be a  $\{K_{\gamma}^{(2)},K_{t,t}\}$-free graph. Let $Y \subseteq V(G)$. Define
\[Z = \left\{z \in V(G)\colon \alpha(N(z) \cap Y) \geq \frac{\alpha(Y)}{C}\right\}\,.\]
Then
\[\min(\alpha(Y),\alpha(Z)) \leq (512C)^{\gamma^{2t}}\,.\]
\end{theorem}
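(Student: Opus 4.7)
The plan is to argue by contradiction: assume both $\alpha(Y)>N$ and $\alpha(Z)>N$ where $N:=(512C)^{\gamma^{2t}}$, and construct an induced $K_\gamma^{(2)}$ in $G$, contradicting the $K_\gamma^{(2)}$-freeness. First, fix an independent set $J\subseteq Z$ with $|J|>N$ and, for each $z\in J$, an independent set $I_z\subseteq N(z)\cap Y$ with $|I_z|\ge \alpha(Y)/C$. The fundamental consequence of $K_{t,t}$-freeness I would exploit throughout is: for any distinct $z_1,\dots,z_t\in J$,
\[
|I_{z_1}\cap\cdots\cap I_{z_t}|\ \le\ t-1,
\]
for otherwise $t$ common elements together with $\{z_1,\dots,z_t\}$ would form an induced $K_{t,t}$: $J$ is independent, $I_{z_1}$ is independent, and every cross-edge is present by construction. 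This is a strong bounded-VC-dimension-type restriction on the set system $\{I_z:z\in J\}$.

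The proof would then proceed as a sunflower-plus-cleaning argument. Thinking of $\{I_z:z\in J\}$ as a set system in the universe $Y$, the intersection bound above, together with the Erd\H{o}s--Ko--Rado sunflower lemma applied to a truncation of each $I_z$ to a common size, yields $\gamma$ vertices $m_1,\dots,m_\gamma\in J$ whose sets $I_{m_i}$ share a common core of size at most $t-1$, with large pairwise-disjoint petals $P_i:=I_{m_i}\setminus\mathrm{core}$. The $m_i$ will be the main vertices of the would-be $K_\gamma^{(2)}$. A secondary cleaning of each $P_i$, again using $K_{t,t}$-freeness to control the ``bad'' sets $P_i\cap\bigcap_{k\in K}N(m_k)$ for each $t$-subset $K\subseteq\{m_k:k\ne i\}$ (each of size at most $t-1$ by the same reasoning applied inside $P_i$), then lets us discard all $y\in P_i$ having too many neighbors among the other main vertices, leaving a large clean set $P_i'\subseteq P_i$ whose members have no forbidden adjacencies to other $m_k$. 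Finally, for each pair $\{i,j\}$ one selects an edge $y^i_{ij}y^j_{ij}$ between $P_i'$ and $P_j'$, chosen greedily with respect to earlier pairs to enforce the non-adjacencies among subdivision vertices; these edges together with the $m_i$ form the desired induced $K_\gamma^{(2)}$.

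The exponent $\gamma^{2t}$ arises naturally from tracking the cleaning costs. The sunflower extraction, needed to select $\gamma$ main vertices from a family with pairwise-intersection bound $t-1$, costs a factor on the order of $(tC)^{O(\gamma)}$ in the exponent. The secondary cleaning of each petal handles the $O(\gamma)$ main-vertex pairings $\{i,k\}$ through a K\H{o}v\'ari--S\'os--Tur\'an-type bound that costs a power of $t$ per pairing, and the greedy subdivision-edge selection over the $\binom{\gamma}{2}$ pairs incurs a similar cost per pair. Summing these losses across the $\sim \gamma^2$ operations, each costing a factor of $(\text{base})^{O(t)}$ in the exponent, accumulates to an exponent proportional to $\gamma^{2t}$; the constant $512$ absorbs the fixed multiplicative constants of the sunflower lemma and the various K\H{o}v\'ari--S\'os--Tur\'an estimates.

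The main obstacle will be guaranteeing the final step: that between any two cleaned petals $P_i',P_j'$ an edge exists, and moreover that one can choose these $\binom{\gamma}{2}$ edges consistently with the non-adjacency constraints across pairs. A priori $K_{t,t}$-freeness only upper-bounds edges between two independent sets, not lower-bounds them; the existence of the needed edge is forced indirectly, by showing that in its absence one can find an even larger sub-instance to which the same extraction reapplies, and iterating this backtracking a bounded number of times with a larger initial starting pool. Managing this backtracking so that the cumulative loss fits within the stated exponent $\gamma^{2t}$, while simultaneously enforcing the pair-wise non-adjacency of subdivision vertices, is the technical heart of the argument.
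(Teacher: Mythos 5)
The proposal has a genuine gap exactly where you flag it, and the fix you sketch does not work.

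Your plan gets $\gamma$ centers $m_1,\dots,m_\gamma$ with large pairwise-disjoint petals $P'_i$, and then must find, for each pair $\{i,j\}$, an edge between $P'_i$ and $P'_j$ to serve as the two subdivision vertices. As you note, $K_{t,t}$-freeness only bounds the number of edges between two stable sets from above; it gives no reason whatsoever for a single edge to exist. The proposed repair -- ``if a needed edge is absent, find a larger sub-instance and re-extract, with bounded backtracking'' -- is not an argument. It does not explain what structural contradiction arises when the edges are missing, nor why the backtracking terminates, nor how the losses stay inside the target exponent. Without that, the proof does not close.

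The paper resolves this not by forcing edges but by a Ramsey dichotomy. It builds a graph $\Gamma$ on the cleaned family whose vertices are the candidate centers and whose edges record that two petals have a \emph{large matching} between them. If $\Gamma$ has a clique of size $\gamma$, the matchings allow one to pick, greedily and pair by pair, fresh subdivision vertices that avoid all previously used vertices and their bounded-degree back-adjacencies, yielding the induced $K_\gamma^{(2)}$. If instead $\Gamma$ has a large stable set, the absence of large matchings means (via K\H onig's theorem) one can delete a small set of vertices so that the corresponding petals become pairwise anticomplete; their union is then a stable subset of $Y$ larger than $\alpha(Y)$, a contradiction. It is precisely this second branch that supplies the lower-bound information you were missing. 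The case where ``edges between petals do not exist'' is not an obstacle to route around -- it is the other horn of the dichotomy, and it terminates in a counting contradiction, not in backtracking.

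Two further structural differences are worth noting. First, the paper does not use the sunflower lemma at all; its cleaning is done by a degeneracy argument to pick an ``almost independent'' subfamily $I$ with all cross-adjacencies small. Second, and more importantly, the paper's cleaning step itself relies on an induction on the index $i$ of the forbidden $K_{i,t}$ with respect to $Y$: the bound on the set of vertices with too many neighbors in $J''(z)$ (the analogue of your ``bad'' sets) is exactly the inductive hypothesis applied inside $N(z)\cap Y$, where adding $z$ promotes a $K_{i-1,t}$ to a $K_{i,t}$. Your one-shot sunflower plan would need an independent substitute for this recursive self-improvement; the $t$-wise intersection bound you state is the starting observation, not a replacement for it, since the sunflower lemma controls pairwise rather than $t$-wise intersections.

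So the proposal is on a recognizably adjacent track (same forbidden-configuration target, same flavor of cleaning, same greedy endgame) but is missing the Ramsey dichotomy that is the actual mechanism closing the argument, and it replaces the paper's induction with a sunflower step that does not provide the same leverage. The step you yourself identify as ``the technical heart'' is exactly where the gap lies, and it is not a technicality.
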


A pair $(A,B)$ is a {\em $K_{s,t}$ in $G$} if $|A| = s$, $|B| = t$ and $A,B$ are disjoint stable sets in $G$ that are complete to each other. (So $A \cup B$ is a  $K_{s,t}$ in $G$.)  Let $Y \subseteq V(G)$. We say that $(A,B)$ is a {\em $K_{s,t}$ in $G$ with respect to $Y$} if $(A,B)$ is a $K_{s,t}$ in $G$ and $B \subseteq Y$. We say that $G$ is {\em $K_{s,t}$-free with respect to $Y$} if there is no $K_{s,t}$ with respect to $Y$. We start with a lemma.

\begin{lemma}\label{alpha-lemma-technical}
Let $G$ be a graph and let $C,\gamma, s,t \in \NN$ such that  $1 \leq s \leq t$ and $C,\gamma \geq 2$. For $i \in \{1,\ldots,s\}$, define
\[c_i = (8^{s-i} \cdot C)^{\gamma^{s-i}}\,,\]
\[f(i) = f_{s,t,\gamma,C}(i) = t(C \cdot 8^s)^{2i\gamma^s}\,.\]
Let $Y \subseteq V(G)$ and let
\[Z = \left\{z \in V(G)\colon \alpha(N(z) \cap Y) \geq \frac{\alpha(Y)}{c_i}\right\}\,.\]
Assume that  the following two conditions hold:
\begin{itemize}
\item[(a)] $G$ is $K_\gamma^{(2)}$-free and $K_{i,t}$-free with respect to $Y$.
\item[(b)] $\alpha(Y) > f(i)$.
\end{itemize}
Then $\alpha(Z) < f(i)$.
\end{lemma}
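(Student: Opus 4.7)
The plan is to proceed by induction on $i$.

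\emph{Base case} ($i=1$). The assumption that $G$ is $K_{1,t}$-free with respect to $Y$ is exactly the statement that $\alpha(N(z)\cap Y) < t$ for every $z \in V(G)$. A direct comparison (using $\gamma\ge 2$ and $s \geq 1$) shows $f(1)/c_1 = t(C\cdot 8^s)^{2\gamma^s}/(8^{s-1}C)^{\gamma^{s-1}} \geq t$, so $\alpha(Y)/c_1 > f(1)/c_1 \geq t$. This forces $Z = \varnothing$, and hence $\alpha(Z)=0<f(1)$.

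\emph{Inductive step.} Fix $i\ge 2$ and assume the lemma for $i-1$. Suppose, for contradiction, that $\alpha(Z) \geq f(i)$; fix a stable set $Z_0 \subseteq Z$ with $|Z_0|=f(i)$, and for each $z \in Z_0$ fix a stable set $S_z \subseteq N(z)\cap Y$ with $|S_z| \geq \alpha(Y)/c_i$. The aim is to produce $Y' \subseteq Y$ and a stable set $Z_1 \subseteq Z_0$ such that (i) $\alpha(Y')>f(i-1)$; (ii) $G$ is $K_{i-1,t}$-free with respect to $Y'$; and (iii) $|Z_1|\ge f(i-1)$ and $\alpha(N(z)\cap Y') \geq \alpha(Y')/c_{i-1}$ for every $z \in Z_1$. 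Once (i)--(iii) are established, the inductive hypothesis applied to $Y'$ gives $\alpha(Z')<f(i-1)$ for the corresponding set $Z'$, contradicting (iii) since $Z_1\subseteq Z'$ is stable of size at least $f(i-1)$. The parameter identities $f(i)/f(i-1)=(C\cdot 8^s)^{2\gamma^s}$ and $c_{i-1}=8^{\gamma^{s-i+1}} c_i^{\gamma}$ are what provide the slack needed to absorb the losses in the averaging steps below.

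The natural candidate is $Y' = S_{z^*}$ for a well-chosen $z^* \in Z_0$: then $|Y'|\ge\alpha(Y)/c_i > f(i-1)$ (the last inequality follows from the parameter identities above), so (i) holds. The key ``one-step lifting'' observation relevant to (ii) is that if $(A,B)$ is a $K_{i-1,t}$ pair with $B \subseteq Y' \subseteq N(z^*)$ and $z^* \notin A \cup N(A)$, then $(A \cup \{z^*\},\, B)$ is a $K_{i,t}$ with respect to $Y$, contradicting (a). So either we obtain an immediate contradiction from such a pair, or every $K_{i-1,t}$ pair with respect to $Y'$ has $A$ meeting $\{z^*\}\cup N(z^*)$; in the extreme case where no such pair exists, (ii) holds.

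\emph{Main obstacle.} Simultaneously guaranteeing (ii) and (iii) -- that is, picking $z^*$ both ``clean'' enough that (ii) is forced by the lifting and ``popular'' enough that many $z\in Z_0$ inherit large stable sets in $N(z)\cap Y'$ -- is the crux. I expect to handle both via an averaging argument against a maximum stable set $I \subseteq Y$: maximality forces $|I \cap N[S_z]| \geq |S_z| \geq \alpha(Y)/c_i$ for every $z \in Z_0$ (else $(I\setminus N[S_z])\cup S_z$ would be a larger stable set), so double-counting over $I\times Z_0$ produces a vertex $v^* \in I$ with $v^* \in N[S_z]$ for at least $|Z_0|/c_i$ values of $z$. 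A case split -- whether $v^* \in S_z$ for most such $z$ (so $v^* \in N(z)$), or $v^*$ only has a neighbor in $S_z$ -- either pinpoints a large stable subset of $Z_0$ lying in $N(v^*)$, on which one can take $Y' = N(v^*)\cap Y$ (possibly refined to a stable subset) and verify (ii) and (iii), or, by iterating the averaging, produces $\gamma$ hubs $z_1,\dots,z_\gamma\in Z_0$ together with matching adjacencies between their $S_{z_j}$'s that realise an induced $K_\gamma^{(2)}$, contradicting (a). The $K_\gamma^{(2)}$-free hypothesis is thus precisely what rules out the pathological branch where the selection procedure fails; the parameter gaps in $c$ and $f$ are designed so that the multiplicative losses (of order $c_i$) at each averaging or case-split step still leave $|Z_1|\ge f(i-1)$, as required.
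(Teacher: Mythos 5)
Your base case coincides with the paper's, and the ``one-step lifting'' observation (extending a $K_{i-1,t}$ with respect to $Y'\subseteq N(z^*)$ to a $K_{i,t}$ with respect to $Y$ by adjoining $z^*$) is indeed the engine behind the paper's inductive step. However, the way you propose to deploy it has genuine gaps, and the overall shape of your argument diverges from a working proof at exactly the point you flag as ``the crux.''

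The essential difficulty is that you want to run the induction \emph{once}, on a single well-chosen $Y'=S_{z^*}$, together with a stable set $Z_1\subseteq Z_0$ of size at least $f(i-1)$ for which each $z\in Z_1$ has $\alpha(N(z)\cap Y')\geq \alpha(Y')/c_{i-1}$. You give no mechanism for (iii): membership of $z$ in $Z$ says something about $N(z)\cap Y$, not about $N(z)\cap S_{z^*}$, and there is no reason the stable sets $S_z$ should cluster inside a single $S_{z^*}$. Your averaging argument against a maximum stable set $I\subseteq Y$ produces a vertex $v^*\in I$ contained in $N[S_z]$ for many $z\in Z_0$; but taking $Y'=N(v^*)\cap Y$ breaks (ii) — there is no reason $v^*$ is anticomplete to the side $A$ of an arbitrary $K_{i-1,t}$ with respect to $Y'$ — and still does not give (iii). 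Finally, the $K_\gamma^{(2)}$ branch is only sketched, and the real obstacle there is \emph{induced-ness}: to build an induced $K_\gamma^{(2)}$ one must prevent the internal vertices of distinct subdivision paths from being adjacent, which requires substantial preprocessing.

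The paper's proof handles both issues by applying the inductive hypothesis not once but to \emph{every} $z\in Z$ (after passing to $G' = G\setminus(N[z]\setminus J''(z))$, which is what makes the lifting of $K_{i-1,t}$ pairs unconditional). This yields a low-in-degree digraph on $Z$, so a degeneracy argument extracts $I\subseteq Z$ with $|I|=(8c_i)^{\gamma-1}$ such that $|N(z)\cap J''(z')|$ is small for all $z\ne z'\in I$. After two further cleaning steps that make the sets $J(z)$ pairwise disjoint and pairwise nearly anticomplete, the auxiliary graph $\Gamma$ on $I$ (edges given by large matchings between $J(z)$ and $J(z')$) is attacked by Ramsey: a stable set of size $4c_i$ in $\Gamma$ would, via K\H{o}nig, force $\alpha(Y)\leq f(i)$, contradicting (b); a clique of size $\gamma$ in $\Gamma$ would yield an induced $K_\gamma^{(2)}$, thanks precisely to the cleaning that controls stray adjacencies. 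Your proposal anticipates the lifting observation and the role of $K_\gamma^{(2)}$-freeness, but it is missing the degeneracy-then-Ramsey aggregation, the cleaning steps that make the $K_\gamma^{(2)}$ induced, and the ``too-large stable set in $Y$'' alternative that discharges the other Ramsey branch; these are the core of the argument and cannot be replaced by a single averaging step against a maximum stable set in $Y$.
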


\begin{proof}
We proceed by induction on $i$. Let $i = 1$. Suppose $Y \subseteq G$ satisfies conditions (a) and (b) above. So $\alpha(Y) > f(1) > c_1 t$. Since $G$ is $K_{1,t}$-free with respect to $Y$, it follows that for every $z \in V(G)$, $\alpha(N(z) \cap Y) < t < \frac{\alpha(Y)}{c_1}$. We deduce  that $Z$ is empty, so $\alpha(Z) < 1 < f(1)$, as required.

Now let $i \in \{2,\ldots,s\}$ and assume inductively that the result holds for $i-1$. Assume that  $Y$ satisfies conditions (a) and (b) above. We need to show that $\alpha(Z) < f(i)$.

Suppose not. By taking a subset of $Z$ if necessary, we may assume that $\alpha(Z) = |Z| = f(i)$ (and in particular $Z$ is an independent set). For each $z \in Z$, let $J''(z)$ be an independent set of size $\lceil \frac{\alpha(Y)}{c_i} \rceil$ in $N(z) \cap Y$.  Write
\[d=\frac{1}{c_{i-1}} \left\lceil \frac{\alpha(Y)}{c_i}\right\rceil\,.\]

We now perform some cleaning steps, modifying $Z$ and $J''(z)$.

\sta{Let $z \in Z$, and let $F(z)$ be the set of all vertices $z'\in V(G) \setminus N[z]$ such that $|N(z') \cap J''(z)| \geq d$. Then $\alpha(F(z)) < f(i-1)$. \label{degree}}

Let $G'= G \setminus (N[z]\setminus J''(z))$ and let $Y' = J''(z)$.  If $(A,B)$ is a $K_{{i-1},t}$ with respect to $Y'$ in $G'$,
then $(A \cup \{z\},B)$ is a $K_{i,t}$ with respect to $Y$ in $G$,
and therefore,  $Y'$ satisfies condition (a) with $i-1$ in $G'$.
Further, for every $i \geq 2$, we have that
\[|Y'|=\left\lceil \frac{\alpha(Y)}{c_i} \right\rceil > \frac{f(i)}{c_i} \geq f(i-1)\,,\]
and so $Y'$  satisfies condition (b) with $i-1$ in $G'$.
Let $Z' = \{z \in V(G')\colon \alpha(N(z) \cap Y') \geq d\}$.
It follows inductively that $\alpha(Z') < f(i-1)$. Since $F(z) \subseteq Z'$,
\eqref{degree} follows.
\\
\\
\sta{\label{claim: cleaning 1}There exists $I \subseteq Z$ with $|I| = (8c_i)^{\gamma-1}$ such that for all distinct $z,z' \in I$, $|N(z) \cap J''(z')| < d$.}

Let $K$ be a directed graph with vertex set $Z$ where $(z_1,z_2) \in E(K)$ if and only if  $|N(z_1) \cap J''(z_2)|  \geq d$.
Since $Z$ is a stable set, it follows from \eqref{degree} that for all $z \in Z$, the indegree of $z$ in $K$ is less than $f(i-1)$.
By an easy degeneracy argument (see, for example, Lemma 5.2 of \cite{TW10})
$K$ contains an independent set $I$ of size at least \[\frac{f(i)}{2f(i-1)} = \frac{1}{2}(C\cdot8^s)^{2\gamma^s} =
\frac{1}{2}(C\cdot 8^{s-i})^{\gamma^{s-i}\cdot2\gamma^{i}} 8^{2i\gamma^s} = \frac{1}{2} c_i^{2\gamma^i} 8^{2i\gamma^s}
\geq \frac{1}{2} (8c_i)^{\gamma} \geq (8c_i)^{\gamma-1}\,.\]
This proves (\ref{claim: cleaning 1}).
\\
\\
Let $I$ be as in \eqref{claim: cleaning 1}.

\sta{\label{claim: cleaning 2} For every $z \in I$ there exists $J'(z) \subseteq J''(z)$ with  $|J'(z)| \geq |J''(z)| - |I| d$ such that for all distinct $z,z' \in I$, $J'(z) \cap J'(z') = \emptyset$.}

Let  $z \in I$.  It follows from the definition of $I$ that
for every  $z' \in I$ with $z' \neq z$, we have  that $|N(z') \cap J''(z)| < d$. Define   \[J'(z) = J''(z) \setminus \bigcup_{z' \in I \setminus \{z\}} N(z')\,.\]
It then follows that $|J'(z)| \geq |J''(z)| - |I|d$ for all $z \in I$, and the sets $J'(z)$ are pairwise disjoint. This proves (\ref{claim: cleaning 2}).
\\
\\
\sta{\label{claim: J size} For every $z \in I$ there  exists $J(z) \subseteq J'(z)$ with $|J(z)| \geq |J''(z)| - |I| \cdot \left(f(i-1) + d\right)$ such that for every vertex $v \in \bigcup_{z' \in I} J(z')$ we have
$|N(v) \cap J(z)|<d$.}

Let $z \in I$. Let $F'(z)=\bigcup_{z' \in I \setminus \{z\}} \left(J'(z) \cap F(z')\right)$.
By \eqref{degree} $\alpha(F'(z)) <|I|f(i-1)$. Define
\[J(z)=J'(z) \setminus F'(z)\,.\]
Now $|J(z)| \geq |J''(z)| - |I| \cdot \left(f(i-1) + d\right)$  as required.
This proves~\eqref{claim: J size}.
\\
\\
By known upper bounds on Ramsey numbers \cite{ramsey}, $|I| \geq R(4c_i, \gamma)$.

Let $\Gamma$ be a graph with vertex set $I$, where two distinct vertices $z, z' \in I$ are adjacent
if and only if there is a matching of size at least $m = 2\gamma^2 d$ between $J(z)$ and $J(z')$. Since by \eqref{claim: cleaning 1} and  Ramsey   theorem   \cite{ramsey}  $|I| \geq R(4c_i,\gamma)$,  it follows that
$\Gamma$ contains a stable set of size $4c_i$ or a clique of size $\gamma$.
We will finish the proof by showing that both cases lead to a contradiction.

\sta{\label{claim: no red clique}$\Gamma$ has no  stable set of size $4c_i$.}

 Suppose there is a stable set $S$ in $\Gamma$ of size $4c_i$.
 Let $z,z' \in S$ be distinct. Since there is no   matching of size $m$ between $J(z)$ and $J(z')$,
 by K\"{o}nig's Theorem \cite{Konig} there exists $X_{zz'} \subseteq J(z) \cup J(z')$ with $|X_{zz'}|<m$ such that $J(z) \setminus X_{zz'}$ is anticomplete to
 $J(z') \setminus X_{zz'}$. Let $X=\bigcup_{z \neq z' \in S}X_{zz'}$.
Then $J=\bigcup_{z \in S} J(z) \setminus X$ is a stable set and
\[|X| \leq (4c_i)^2m = 16c_i^2\frac{2\gamma^2}{c_{i-1}} \left\lceil \frac{\alpha(Y)}{c_i}\right\rceil\,.\]
By \eqref{claim: J size}
\[|J| \geq 4c_i(\left\lceil \frac{\alpha(Y)}{c_i}\right\rceil - |I|\left(f(i-1) + \frac{1}{c_{i-1}} \left\lceil \frac{\alpha(Y)}{c_i}\right\rceil\right))-|X| \geq \]
\[4c_i\left(\left\lceil \frac{\alpha(Y)}{c_i}\right\rceil - |I|\left(f(i-1) + \frac{1}{c_{i-1}} \left\lceil \frac{\alpha(Y)}{c_i}\right\rceil\right) - \frac{8\gamma^2 c_i}{c_{i-1}} \left\lceil \frac{\alpha(Y)}{c_i}\right\rceil \right)\,.\]

Since $J \subseteq Y$, it follows that  $|J| \leq \alpha(Y)$.
Simplifying,
\[\left\lceil\frac{\alpha(Y)}{c_i}\right\rceil \left(4c_i - \frac{4c_i|I|}{c_{i-1}} - \frac{8\gamma^2 c_i}{c_{i-1}}\right) \leq 4c_i|I|f(i-1) + \alpha(Y)\,.\]

Note that
\[\frac{c_{i-1}}{4} = \frac{(8^{s-i+1}C)^{\gamma^{s-i+1}}}{4} = \frac{(8^{s-i}C)^{\gamma^{s-i+1}} 8^{\gamma^{s-i+1}}}{4} \geq \frac{(8c_i)^{\gamma}}{4} \geq (8c_i)^{\gamma-1} = |I|\]
and
\[c_{i-1} = (8^{s-i+1}C)^{\gamma^{s-i+1}} \geq (8C)^{\gamma} \geq 16^\gamma \geq 8\gamma^2\,.\]
Therefore, $\frac{4|I|}{c_{i-1}} \leq 1$ and
$\frac{8\gamma^2}{c_{i-1}} \leq 1$.
It follows that
\[\left\lceil\frac{\alpha(Y)}{c_i}\right\rceil \left(4c_i - c_i - c_i\right) \leq 4c_i|I|f(i-1) + \alpha(Y)\,.\]
We may drop the ceiling signs and rearrange to obtain \[\alpha(Y) \leq 4 c_i |I| f(i-1) = \frac{1}{2} (8c_i)^{\gamma} f(i-1) \leq f(i)\,.\]
This is a contradiction as $\alpha(Y) > f(i)$, which proves (\ref{claim: no red clique}).

\sta{\label{claim: no blue clique}$\Gamma$ has no clique of size $\gamma$.}
 Suppose there exists clique $K$ of size $\gamma$ in $\Gamma$. We will obtain a contradiction by constructing a $K_{\gamma}^{(2)}$ in $G$.
Write $K= \{z_1,\ldots,z_{\gamma}\}$. For each $i,j \in [\gamma]$ with $i < j$, we will find a vertex $u \in J(z_i)$ and a vertex $v \in J(z_j)$ such that $u$ is adjacent to $v$,  so we may connect $z_i$ to $z_j$ via the path $z_i \dd u \dd v \dd  z_j$. We will ensure that the choice of $u$ and $v$ for each $i$ and $j$ is such that the resulting graph is a  $K_{\gamma}^{(2)}$ in $G$.

Loop through the pairs $(i,j) \in [\gamma]^2$ with $i < j$. Fix some $i,j$. We have previously connected fewer than $\binom{\gamma}{2}$ pairs $(i',j')$. For each such previously connected pair $(i',j')$, we chose  $u' \in J(z_{i'})$ and $v' \in J(z_{j'})$. It follows that so far  we have used fewer than $\gamma(\gamma-1)$ vertices from $\bigcup_{i' \in K} J(z_{i'})$; denote the set of these previously used vertices by $X$.
By $\eqref{claim: J size}$,
for each  $v \in X$,
$|N(v) \cap J(z_i)|<d$ and $|N(v) \cap J(z_j)|<d$.
Since $|X| < \gamma(\gamma-1)$,
and since there is a matching of size $m$ between $J(z_i)$ and $J(z_j)$,
there exist  $u \in J(z_i) \setminus X$ and $v \in J(z_j) \setminus X$
such that $u$ is adjacent to $v$. Connect $z_i$ to $z_j$ using $u$ and $v$ via the path $ z_i \dd u \dd v \dd z_j$. This gives us a $K_{\gamma}^{(2)}$, a contradiction that proves (\ref{claim: no blue clique}).
\end{proof}

\begin{proof}[Proof of Theorem~\ref{few big independent neighborhoods in big independent set}]
Let $i = s = t$, $Z = \left\{z \in V(G)\colon \alpha(N(z) \cap Y) \geq \frac{\alpha(Y)}{C}\right\}$ and let $f(i)$ be defined as in Lemma~\ref{alpha-lemma-technical}. Note that
\begin{align*}
t(C\cdot8^t)^{2t\gamma^{t}}
&\leq (2C\cdot8^t)^{2t\gamma^{t}}\\
&\leq (2C)^{2t\gamma^t} \cdot 2^{6t^2 \gamma^t}\\
&\leq (2C)^{\gamma^{2t}} \cdot 2^{8 \gamma^{2t}}\\
&= (512C)^{\gamma^{2t}}\,.
\end{align*}

We may assume
that $\alpha(Y)>(512C)^{\gamma^{2t}} \geq t(C\cdot8^t)^{2t\gamma^{t}} = f(t)$ as otherwise, there is nothing to show.
Since $G$ is $\{K_{\gamma}^{(2)},K_{t,t}\}$-free and $\alpha(Y) >f(t)$, Lemma~\ref{alpha-lemma-technical} applies. It follows that $\alpha(Z) < f(t) \leq (512C)^{\gamma^{2t}}$.
\end{proof}

\section{The layered sets argument}
\label{sec:layeredsets}
We start with a few more definitions. Let $\mathcal{C}$ be a class of graphs and
let $k$ be a positive integer. We say that $\mathcal{C}$  is {\em $k$-breakable}
if $\mathcal{C}$ is closed under taking induced subgraphs and every
graph in $\mathcal{C}$ is $k$-breakable.
Next we define a more refined version of balanced separators.
Let $\eps\in(0,1]$, $G$ be a graph and $w$ be a weight function on $G$. Let $(S, C)$ be a pair of subsets of $V(G)$ and let $B$ be a component of $G\setminus C$ with maximum weight.
The pair $(S,C)$ is said to be a {\em $(w,\eps)$-boosted separator} of $G$ if $w(B)\leq 1/2$ or if $S\cap B$ is a $(w,\eps)$-balanced separator of $B$.
We call $C$ the {\em boosting set} of $(S,C)$.
A set $X\subseteq V(G)$ is said to be a {\em core} of $(S,C)$ if $S\subseteq N[X]$. Let $k\in \nat$, $f\colon \nat\rightarrow\reel_{\geq0}$. A graph $G$ is said to be {\em $(k,f,\eps)$-breakable} if for every weight function $w$, there exists a $(w,\eps)$-boosted separator $(S,C)$ with a core of size at most $k$ and $\alpha(C)\leq f(|V(G)|)$.  A class $\mathcal{C}$ of graphs is
$(k,f,\eps)$-breakable if $\mathcal{C}$ is closed under taking induced subgraphs and every graph in $\mathcal{C}$ is $(k,f, \eps)$-breakable.
If $f(n)$ is a constant function
with $f(n)=c$, we say that $G$  (or $\mathcal{C}$) is  {\em $(k,c,\eps)$-breakable}.

The goal of this section is to prove the following:
\begin{theorem}\label{thm : boosted construction result}
For all positive integers $k,\gamma, t,\lambda$ there exists an integer $c=c(k,\gamma, t, \lambda)$
with the following properties.  Let
$f\colon \nat\rightarrow\reel_{\geq0}$ and $\eps\in(0,1]$. Let $\mathcal{C}$ be a
$(k,f,\eps)$-breakable class of graphs, and let $G \in \mathcal{C}$ be
a $\{K_\gamma^{(2)}, K_{t,t}\}$-free  graph on $n\geq2$ vertices. Let $w$ be a weight function on $G$.
Then there exist $C,S_1,\dots,S_{\lceil \log n \rceil} \subseteq V(G)$ where
\begin{enumerate}[(I)]
\item \label{small alpha} $\alpha(C) \leq c (f(n) \log n + \log^2 n)$,
\item \label{boosted sep} for all $i$, $(S_i,C)$ is a $(w,\eps)$-boosted separator of $G$ with a core $X_i$ of size less than $k$,
\item \label{almost disjoint} if there is a component $G'$ of $G\setminus C$ with $w(G')>\frac{1}{2}$, then no vertex of $G'$ is contained in more than $\frac{3\log n}{k\lambda}$ of the sets $S_1,\ldots,S_{\lceil \log n \rceil}$, and
\item \label{almost disjoint cores} for every $j\in \{1,\ldots, \lceil \log n \rceil\}$, no vertex of $X_j$ is contained in more than $\frac{3\log n}{k\lambda}$ of the sets $S_i$ with $i<j$.
\end{enumerate}

\end{theorem}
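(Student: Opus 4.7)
The plan is a layered-sets argument: build $C$ and $S_1,\ldots,S_m$ (with $m=\lceil\log n\rceil$) iteratively. For the bookkeeping, track the layers
\[L_j^i=\{v\in V(G):|\{r\le j:v\in S_r\}|\ge i\}\,,\]
which satisfy $L_j^0=V(G)$ and $L_{j+1}^i=L_j^i\cup(L_j^{i-1}\cap S_{j+1})$. The goal is to keep $\alpha(L_j^i)$ decaying geometrically in $i$ with only slow growth in $j$; then, setting $D=\lfloor 3\log n/(k\lambda)\rfloor+1$, the top layer $L_m^D$ will be empty, and both (\ref{almost disjoint}) and (\ref{almost disjoint cores}) follow at once, since no vertex of $G$ (hence no vertex of any component of $G\setminus C$ and no vertex of any $X_j$) can then lie in more than $3\log n/(k\lambda)$ of the $S_i$.

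At step $j+1$, first run a \emph{cleaning} phase. Fix $C^\ast=C^\ast(k,\lambda)\ge 2k\lambda$ and set $M=(512C^\ast)^{\gamma^{2t}}$. For each $i\in\{1,\ldots,D\}$, Theorem~\ref{few big independent neighborhoods in big independent set} applied to $Y=L_j^{i-1}$ gives that
\[Z_j^i=\{v\in V(G):\alpha(N(v)\cap L_j^{i-1})\ge \alpha(L_j^{i-1})/C^\ast\}\]
satisfies $\min(\alpha(L_j^{i-1}),\alpha(Z_j^i))\le M$; set $F_j=\bigcup_{i=1}^{D}Z_j^i$, so $\alpha(F_j)\le DM$. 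Next, apply $(k,f,\eps)$-breakability to the induced subgraph $G\setminus(C_j\cup F_j)\in\mathcal{C}$ with the restricted weight function, producing a $(w,\eps)$-boosted separator $(S_{j+1},C_{j+1}^{\mathrm{bk}})$ with core $X_{j+1}\subseteq V(G)\setminus(C_j\cup F_j)$ of size less than $k$ and $\alpha(C_{j+1}^{\mathrm{bk}})\le f(n)$. Update $C_{j+1}=C_j\cup F_j\cup C_{j+1}^{\mathrm{bk}}$. Because $X_{j+1}\cap F_j=\emptyset$ and $S_{j+1}\subseteq N[X_{j+1}]$,
\[\alpha(L_{j+1}^i)\le\alpha(L_j^i)+k\cdot\alpha(L_j^{i-1})/C^\ast\le\alpha(L_j^i)+\tfrac{1}{2\lambda}\alpha(L_j^{i-1})\,.\]
Iterating this recurrence gives $\alpha(L_m^D)\le n\binom{m}{D}(2\lambda)^{-D}<1$, so $L_m^D=\emptyset$.

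Condition (\ref{boosted sep}) holds because a $(w,\eps)$-boosted separator of $G\setminus(C_j\cup F_j)$ remains a $(w,\eps)$-boosted separator relative to the final larger boosting set $C$: enlarging $C$ only shrinks the big component, and the subcomponents into which $S_{j+1}$ breaks it remain of weight $\le\eps$. For (\ref{small alpha}),
\[\alpha(C)\le\sum_{j=0}^{m-1}\bigl(\alpha(F_j)+\alpha(C_{j+1}^{\mathrm{bk}})\bigr)\le m\bigl(DM+f(n)\bigr)\le c\bigl(f(n)\log n+\log^2 n\bigr)\]
for an appropriate $c=c(k,\gamma,t,\lambda)$, since $M$ depends only on $k,\gamma,t,\lambda$ and $D=O(\log n)$.

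The main obstacle will be the parameter balance: the cleaning set $F_j$ must absorb every vertex whose neighborhood intersects some layer $L_j^{i-1}$ in a large stable set while its own $\alpha$ stays $\le DM$, and the geometric decay rate $1/(2\lambda)$ per layer must beat the $\binom{m}{D}$ growth factor by the time we reach the top layer. This is exactly the balance provided by Theorem~\ref{few big independent neighborhoods in big independent set}, applicable because of the $\{K_\gamma^{(2)},K_{t,t}\}$-free hypothesis, and tuned by choosing $C^\ast$ large enough (roughly $\ge 2k\lambda$).
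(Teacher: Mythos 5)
Your overall architecture is the same ``layered sets'' argument the paper uses (global cleaning via Theorem~\ref{few big independent neighborhoods in big independent set}, then apply breakability to what remains, track layers $L_j^i$), so the plan is sound. But there are two concrete gaps, the first of which is fatal as written.

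\emph{The decay constant is far too small.} You set $C^\ast\ge 2k\lambda$, giving a per-layer decay of $q=k/C^\ast\le 1/(2\lambda)$, and claim $\alpha(L_m^D)\le n\binom{m}{D}(2\lambda)^{-D}<1$ with $m=\lceil\log n\rceil$ and $D=\lfloor 3\log n/(k\lambda)\rfloor+1$. This inequality is false for large $n$. Since $\binom{m}{D}\ge (m/D)^D\approx (k\lambda/3)^D$, you get
\[
n\binom{m}{D}(2\lambda)^{-D}\;\gtrsim\; n\,\Bigl(\frac{k\lambda}{6\lambda}\Bigr)^{D}\;=\;n\,\Bigl(\frac{k}{6}\Bigr)^{D},
\]
which is at least $n$ whenever $k\ge 6$, and even for small $k$ the correct estimate $\binom{m}{D}\approx (ek\lambda/3)^D$ shows the product blows up unless $q$ is exponentially small in $k\lambda$. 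The point is that $\binom{m}{D}$ itself grows like $n^{\Theta(\log\log n /(k\lambda))}$, so a decay rate that is merely inverse-polynomial in $\lambda$ cannot beat it. The paper takes $d=k\cdot 2^{\lambda k}$, i.e.\ $q=2^{-\lambda k}$, which gives $q^{D-1}\le n^{-3}$ and kills the $n\cdot\binom{m}{D}\le n\cdot 2^m\le 2n^2$ prefactor. Your parenthetical ``roughly $\ge 2k\lambda$'' massively underestimates what is needed; $C^\ast$ must be at least on the order of $k\,2^{k\lambda}$.

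\emph{The cleaning step does not bound $\alpha(F_j)$.} Theorem~\ref{few big independent neighborhoods in big independent set} only guarantees $\min(\alpha(L_j^{i-1}),\alpha(Z_j^i))\le M$. You then write ``set $F_j=\bigcup_i Z_j^i$, so $\alpha(F_j)\le DM$,'' but this requires $\alpha(Z_j^i)\le M$ for every $i$, which the theorem does not give: when $\alpha(L_j^{i-1})\le M$, the set $Z_j^i$ (by definition, $\{v:\alpha(N(v)\cap L_j^{i-1})\ge\alpha(L_j^{i-1})/C^\ast\}$) can have large stability number. The paper resolves exactly this by a case split in Algorithm~\ref{algo: Layered sets}: if $\alpha(L_j^{i-1})\le T$ it deletes the \emph{entire layer} $L_j^{i-1}$ rather than $Z_j^{i}$, so that in either case the deleted set has stability number at most $T$, and so that the recurrence step can be argued cleanly (the layer contributing to $L_{j+1}^i$ has either been removed, or is large enough that the $Z$-cleaning bound applies). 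You need this same dichotomy.

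Two smaller points: the recurrence should read $\alpha(L_{j+1}^i)\le\alpha(L_j^i)+\sum_{v\in X_{j+1}}\max\!\bigl(\alpha(N(v)\cap L_j^{i-1}),1\bigr)$ to account for $X_{j+1}$ itself landing in $L_j^{i-1}$ (the $\max(\cdot,1)$ is what the paper uses, and it needs $\alpha(L_j^{i-1})>C^\ast$ or similar, which the case split also supplies); and the small-$n$ regime, where $D$ could be $0$ or $1$ and conclusion~(\ref{almost disjoint}) essentially forces $S_i=\emptyset$, should be handled separately (the paper takes $C=G$, $S_i=\emptyset$ when $n<2^{2k\lambda}$).

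Once you replace $2k\lambda$ by $k\,2^{\lambda k}$ and add the ``remove the whole layer when it is already small'' branch, your proposal becomes essentially the paper's proof.
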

To prove this theorem, we give an algorithm that outputs a set of $(w,\eps)$-boosted separators and prove that they have the required proprieties.

The following definition is key  in the  proof.
Let $G$ be a graph and $\mathcal{F}$ be a family of sets with ground set $V(G)$. Then the {\em $i^{th}$ layer} of $\mathcal{F}$ in $G$ is
\[L^i(G, \mathcal{F}) =  \{v \in V(G)\colon  |\{F \colon  F\in \mathcal{F}, v \in F\}| \geq i\}\,.\]

\begin{proof}[Proof of Theorem \ref{thm : boosted construction result}]
We may assume that $n\geq 2^{2k\lambda}$ as otherwise the theorem holds trivially with $c = 2^{2k\lambda}$, $C =G$ and $S_1=\dots =S_{\lceil\log n\rceil} =\mt$.

Let $d = k 2^{\lambda k}$ and $T=(512d)^{\gamma^{2t}}$.
Consider Algorithm \ref{algo: Layered sets}.

 \begin{algorithm}[h]
\begin{algorithmic}[1]
\State  $G_0 = G$
    \For{$j = 1,\dots, \lceil \log n \rceil$}
	    \State {$(S_j,Y_j) \coloneqq $ a $(w,\eps)$-boosted separator of $G_{j-1}$ where $S_j=N[X_j]$, $|X_j| < k$, and $\alpha(Y_j) \leq f(n)$}\\
        \Comment{\textit{Such a pair $(S_j,Y_j)$ exists since $\mathcal{C}$ is $(k,f,\eps)$-breakable.}}
        \For{$i = 1,\dots,j+1$}
            \State $L_j^i \coloneqq  L^i(G_{j-1},\set{S_1,\dots,S_j})$
            \State $Z_j^i \coloneqq  \set{v \in V(G_{j-1})\colon \alpha(N(v) \cap L^i_j) \geq \frac{\alpha(L^j_i)}{d} }$
            \If {$\alpha(L_j^i) \leq T$}
                \State $C_j^i \coloneqq  L_j^i$
            \Else
                \State $C_j^i \coloneqq  Z_j^i$
            \EndIf
        \EndFor
        \State $C_j = Y_j \cup \bigcup_{i=1}^j C_j^i$
        \State {$G_{j} \coloneqq $ a maximum weight component of $G_{j-1}\setminus C_j$}
	\EndFor
\State $C = \bigcup_{j=1}^{\log n}C_j$
\State \Return $(S_1,C),\dots,(S_{\lceil \log n \rceil},C)$
\end{algorithmic}
\caption{Layered Sets Algorithm}
\label{algo: Layered sets}
\end{algorithm}

Since for all $j\in \{1,\ldots,\lceil \log n\rceil\}$, $(S_j,Y_j)$ is a $(w,\eps)$-boosted separator of $G_{j-1}$, and $G_{j}$ is a component of $G_{j-1}\setminus C_{j}$, and
$Y_j\subseteq C_j\subseteq C$,
it follows that each $(S_j,C)$ is a $(w,\eps)$-boosted separator of $G$, and so \ref{boosted sep} holds.

\sta{\label{claim: small Level boosted}For all $1 \leq i \leq j\le \lceil \log n\rceil$,
\[\alpha(L_j^i) \leq \frac{2^{j-1}}{2^{\lambda k(i-1)}} n\,.\]}

 We proceed by induction on $j$.
Whenever $i=1$, the bound is trivial. When $j = 1$, we must also have that $i = 1$. So the base case holds. So now assume $2 \leq i \leq j$.
If $i = j$, then $L^{i}_{j-1} = \emptyset$.
If $i < j$, by the induction hypothesis, $\alpha(L^{i}_{j-1}) \leq \frac{2^{j-2}}{2^{\lambda k(i-1)}} n$. Since $L_j^i \subseteq L_{j-1}^i \cup (L^{i-1}_{j-1} \cap S_j)$, it suffices to show that $\alpha(L^{i-1}_{j-1} \cap S_j) \leq \frac{2^{j-2}}{2^{\lambda k(i-1)}}n$ as well. There are two possible cases.

\begin{itemize}
\item {\em Case 1: $\alpha(L^{i-1}_{j-1}) \leq T$}. In this case  $C^{i-1}_{j-1} = L^{i-1}_{j-1} \subseteq C_{j-1}$ is removed from $G_{j-2}$ when forming $G_{j-1}$, and so none of the vertices in $L^{i-1}_{j-1}$ belong to $G_{j-1}$.
Therefore, $L^{i-1}_{j-1} \cap S_j = \mt$, so we are done.
\item {\em Case 2: $\alpha(L^{i-1}_{j-1}) > T$}.
In this case we have that $C^{i-1}_{j-1} = Z^{i-1}_{j-1} \subseteq C_{j-1}$ is removed from $G_{j-2}$ when forming $G_{j-1}$.
It follows that for every vertex $v \in G_{j-1}$,
\[\alpha(N(v) \cap L^{i-1}_{j-1}) < \frac{\alpha(L^{i-1}_{j-1})}{d} \leq \frac{2^{j-2}}{d 2^{\lambda k(i-2)}} n.\]

Since $L^{i-1}_{j-1} \cap S_j \subseteq (X_j \cap L^{i-1}_{j-1}) \cup (\bigcup_{v \in X_j} N(v) \cap L^{i-1}_{j-1})$, we deduce
\[ \alpha(L^{i-1}_{j-1} \cap S_j) \leq \sum_{v \in X_j} \max(\alpha(N(v) \cap L^{i-1}_{j-1}),1)\,.\]
Therefore
\[\alpha(L^{i-1}_{j-1} \cap S_j)  \leq k \frac{\alpha(L^{i-1}_{j-1})}{d}  \leq \frac{2^{j-2}}{2^{\lambda k(i-1)}} n\,.\]
\end{itemize}
This completes the induction and proves \eqref{claim: small Level boosted}.
To prove that \ref{almost disjoint} holds,  let
 $i > 1 + \frac{1+2\log n}{k\lambda}$ and $j=\lceil \log n \rceil$. By \eqref{claim: small Level boosted}, $\alpha(L^i_{j}) \leq \frac{2^{\log n}}{2^{\lambda k(i-1)}} n < 1$, and therefore $|L^i_{j}|=0$.
Since, $  \frac{3\log n}{k\lambda} \geq 1 + \frac{1+2\log n}{k\lambda}$, this proves that  \ref{almost disjoint} holds.

An immediate consequence of \eqref{claim: small Level boosted} and the same calculation as the proof of \ref{almost disjoint} is that for every $j$ no vertex of $G_j$ is in more than $\min\set{j,\frac{3\log n}{k\lambda}}$ of the sets $S_1,\dots, S_j$, which proves \ref{almost disjoint cores}.

Finally, we have
    \begin{align*}
        \alpha(C) = \alpha\left(\bigcup_{j=1}^{\log n}C_j\right)
        &= \alpha\left( \bigcup_{j=1}^{\log n} Y_j \cup \bigcup_{j=1}^{\log n}\bigcup_{i=1}^j C^i_j\right)\\
        &\leq \sum_{j=1}^{\log n} \alpha(Y_j) + \sum_{j=1}^{\log n}\sum_{i=1}^j \alpha\left(C_j^i\right)\\
        &\leq f(n)\log n  + \log(n)^2 T\\
        & = f(n)\log n  + \log(n)^2 (512k 2^{\lambda k})^{\gamma^{2t}}\,,
\end{align*}
where the second inequality follows from  Theorem~\ref{few big independent neighborhoods in big independent set}.
Setting $c = (512k 2^{\lambda k})^{\gamma^{2t}}$ proves that \ref{small alpha} holds.

\end{proof}

\section{Improving the separators}
\label{sec: better seps}

\subsection{Improving the Separating Power}
\label{sec: improving separator}
The goal of this section is to ``boost'' the separators given by Theorem \ref{thm:domsep} to $(w,\eps)$-boosted separators without changing the size of the core and while introducing a boosting set with a somewhat small stability number. This, however, will be at the cost of forbidding $K_{t,t}$.
The idea of boosting the separators without increasing the size of the core is inspired by an argument of Gartland et al.~(\cite{GartlandLMPPR24}, Section~4).

The existence of $1/2$-balanced separators with small cores allows us to obtain $1/2^i$-balanced separators with relatively small cores.
\begin{lemma}[Analogue of Lemma 2 of \cite{gartland_independent_2020}]
\label{lemma : 1/4 separator}
Let $k\ge 2$ be an integer and let
 $\mathcal{C}$ be a $k$-breakable class of graphs.
    Let $G \in \mathcal{C}$ and let $w$ be a weight function on $G$.
    Then, for every positive integer $i$ there exists $X \subseteq V(G)$ with $|X| < 2^{i+1}(k-1)$,
    such that $N[X]$ is a   $(w,1/2^i)$-balanced separator of $G$.
\end{lemma}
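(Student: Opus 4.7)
The plan is to proceed by induction on $i$. The base case $i=1$ is immediate from the definition of $k$-breakability: there is a $w$-balanced (i.e., $(w,1/2)$-balanced) separator $N[X]$ with $|X|<k$, and for $k\ge 2$ we have $k\le 2(k-1)+1 \le 4(k-1) = 2^{i+1}(k-1)$, so the size bound holds.

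For the inductive step, assume the statement for $i$, and let $X_i$ be a witness, so $|X_i|<2^{i+1}(k-1)$ and every component of $G\setminus N[X_i]$ has $w$-weight at most $1/2^i$. Call a component \emph{heavy} if its weight exceeds $1/2^{i+1}$. Since the weights of all components of $G\setminus N[X_i]$ sum to at most $1$, there are strictly fewer than $2^{i+1}$ heavy components; call them $D_1,\dots,D_m$. For each $D_j$ I form the normalized weight function $w_j$ on $G[D_j]$ defined by $w_j(v)=w(v)/w(D_j)$. Since $\mathcal{C}$ is closed under induced subgraphs, $G[D_j]\in \mathcal{C}$, so by $k$-breakability there is $Y_j\subseteq D_j$ with $|Y_j|<k$ such that $N_{G[D_j]}[Y_j]$ is a $w_j$-balanced separator of $G[D_j]$; equivalently, every component of $G[D_j]\setminus N_G[Y_j]$ has $w$-weight at most $w(D_j)/2\le 1/2^{i+1}$.

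I then set $X_{i+1}=X_i\cup Y_1\cup\cdots\cup Y_m$. The size bound follows:
\[
|X_{i+1}|\le |X_i|+m(k-1)<2^{i+1}(k-1)+2^{i+1}(k-1)=2^{i+2}(k-1).
\]
To check that $N[X_{i+1}]$ is a $(w,1/2^{i+1})$-balanced separator, consider any component $C$ of $G\setminus N[X_{i+1}]$. Since $N[X_i]\subseteq N[X_{i+1}]$, $C$ is contained in a component $D$ of $G\setminus N[X_i]$. If $D$ is not heavy, then $w(C)\le w(D)\le 1/2^{i+1}$. If $D=D_j$ for some $j$, then because $N_G[Y_j]\subseteq N[X_{i+1}]$, $C$ lies inside a component of $G[D_j]\setminus N_G[Y_j]$, which has $w$-weight at most $1/2^{i+1}$ by construction.

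The argument is almost entirely bookkeeping; there is no real obstacle. The only subtle point is making sure we handle the distinction between $N_{G[D_j]}[Y_j]$ and $N_G[Y_j]$ correctly when verifying the separator condition, which is clean because deleting a larger neighborhood can only break components further into smaller pieces.
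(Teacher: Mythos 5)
Your proof is correct and follows essentially the same inductive strategy as the paper's: split the components of $G\setminus N[X_i]$ that are still too heavy, apply $k$-breakability to each with a rescaled weight function, and union the resulting cores. The only cosmetic differences are the direction of the step (you go from $i$ to $i+1$; the paper from $i-1$ to $i$) and the rescaling (you normalize by $w(D_j)$, the paper multiplies $w$ by $2^{i-1}$); both give the same bound.
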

\begin{proof}We proceed by induction on $i$. If $i=1$, the result follows from
the fact that $G$ is $k$-breakable. If $i>1$, the induction hypothesis gives us a set $X_0$ with  $|X_0|< 2^{i}(k-1)$ such that $N[X_0]$ is a
$(w,1/2^{i-1})$-balanced separator in $G$.  Let $\mathcal{D}$ be the set of
all components $D$ of  $G\setminus N[X_0]$ such that $w(D)\geq 1/2^i$.
Then $|\mathcal{D}| \leq 2^i$.
Let $D \in \mathcal{D}$ and let $w'(x)=2^{i-1}w(x)$ for all $x\in D$.
Then $w'(D)\leq 1$.
Since $\mathcal{C}$ is $k$-breakable, it follows that $D$ is $k$-breakable, and so
there exists $X(D) \subseteq D$ such that $|X(D)|\leq k-1$ and
$N[X(D)]$ is a
$(w',1/2)$-balanced separator in $D$.  Now let
$X=X_0 \cup \bigcup_{D \in \mathcal{D}}X(D)$.
Then
\[|X| \leq |X_0| + \sum_{D\in \mathcal{D}}|X(D)| < 2^{i}(k-1) + 2^i (k-1) = 2^{i+1}(k-1)\]
and $N[X]$ is a $(w,1/2^i)$-separator of $G$.
\end{proof}

\begin{corollary}
\label{coro: separators are boosted separators }
Let $k\ge 2$ be an integer.
Then, for every positive integer $i$, every $k$-breakable class of graphs is $\left(2^{i+1}(k-1),0,1/2^i\right)$-breakable.
\end{corollary}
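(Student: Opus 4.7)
The plan is to prove this corollary by a direct, essentially one-step, reduction to Lemma~\ref{lemma : 1/4 separator}. Unwinding the definitions, I need to show that for every $G \in \mathcal{C}$ and every weight function $w$ on $G$, there is a pair $(S,C)$ of subsets of $V(G)$ with a core $X$ of size at most $2^{i+1}(k-1)$ such that $\alpha(C) = 0$ (equivalently, $C = \emptyset$), and $(S,C)$ is a $(w,1/2^i)$-boosted separator of $G$. With $C = \emptyset$, the ``boosted'' condition simply asks that if $B$ is a maximum-weight component of $G$ and $w(B) > 1/2$, then $S \cap B$ is a $(w,1/2^i)$-balanced separator of $B$.

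The first step is to handle the trivial case: let $B$ be a maximum-weight component of $G$; if $w(B) \le 1/2$, then $(S,C) = (\emptyset,\emptyset)$ works, with $X = \emptyset$ as a core of size $0 < 2^{i+1}(k-1)$. Otherwise, $w(B) > 1/2$, and this is where the real work happens. Since $\mathcal{C}$ is $k$-breakable (and, in particular, closed under induced subgraphs), the induced subgraph $B$ lies in $\mathcal{C}$, and so Lemma~\ref{lemma : 1/4 separator} applies to $B$ with the weight function obtained by restricting $w$ to $V(B)$. This yields a set $X \subseteq V(B)$ with $|X| < 2^{i+1}(k-1)$ such that $N_B[X]$ is a $(w,1/2^i)$-balanced separator of $B$.

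I would then define $S = N_G[X]$ and $C = \emptyset$. Since $B$ is a connected component of $G$ and $X \subseteq V(B)$, the set $X$ has no neighbors in $V(G)\setminus V(B)$, so $S \cap B = N_G[X] \cap V(B) = N_B[X]$. Therefore $S \cap B$ is a $(w,1/2^i)$-balanced separator of $B$, and $(S,C)$ is a $(w,1/2^i)$-boosted separator of $G$. Clearly $X$ is a core of $(S,C)$ with $|X| \le 2^{i+1}(k-1)$, and $\alpha(C) = \alpha(\emptyset) = 0$. This exhibits $G$ as $(2^{i+1}(k-1),0,1/2^i)$-breakable, and hereditariness of $\mathcal{C}$ transfers the property to the whole class.

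I do not expect any serious obstacle here; the content of the corollary is really packaged in Lemma~\ref{lemma : 1/4 separator}, and the only subtlety is the bookkeeping that restricting attention to the maximum-weight component $B$ is precisely what the definition of a boosted separator with empty boosting set allows, so that the core size bound can be matched exactly to what the lemma provides.
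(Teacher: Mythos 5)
Your proof is correct and relies on the same ingredient as the paper's proof, namely Lemma~\ref{lemma : 1/4 separator} together with the observation that a boosted separator with empty boosting set is exactly a balanced separator of the heaviest component. The only cosmetic difference is that the paper applies Lemma~\ref{lemma : 1/4 separator} directly to $G$ to get a $(w,1/2^i)$-balanced separator $N[X]$ of all of $G$ and then notes $(N[X],\emptyset)$ is automatically boosted, whereas you restrict to the heaviest component $B$ first; both are valid and equally short.
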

\begin{proof}
    Let  $G \in \mathcal{C}$ and let $w$ be a weight function on $G$. Let $X \subseteq G$ with $|X|\leq 2^{i+1}(k-1)$ be a core of a $(w,1/2^i)$-separator guaranteed by Lemma \ref{lemma : 1/4 separator}.
    Then $(N[X],\mt)$ is a $(w,1/2^i)$-boosted separator of $G$.
\end{proof}

Let $G$ be a graph and $\eps\in(0,1]$.
We now introduce new terminology that will be useful in the next two lemmas and in which we will suppress the dependencies on $\eps$ to make the notation more concise.
Let $w$ be a weight function on $G$. Let $X \subseteq G$ be such that $|X| < k$ and $N[X]$ is a $(w, \frac{1}{2})$-balanced separator in $G$.
We will say that a connected component $B$ of $G$ is {\em big} if $w(B)>\eps$.
We denote by $\mathcal{B}(G)$ the set of big components of $G$.
Let $\beta \ge 0$, let $B$ be a big component of $G\setminus N[X]$, and let  $x\in X$.
We  call the pair $(x,B)$ {\em $(X,\beta)$-problematic} if $\alpha(N(x) \cap N(B)) \geq \beta$.
Let $\mathcal{P}(G,X,\beta)$ be the set of $(X,\frac{3\beta}{4})$-problematic pairs of $G$.
Let
\[W(G,X,\beta)=\sum_{(x,B)\in \mathcal{P}(G,X,\beta)} w(B)\,.\]

Let $\beta(G,X)$ be the maximum value of $\beta$ for which an $(X,\beta)$-problematic pair of $G$ exists (with $\beta(G,X) = 0$ if no big component of $G\setminus N[X]$ exists or $X = \emptyset$).

\begin{lemma} \label{lem : boosting one step} Let $k,t\in \nat$ with $k\ge 2$ and $\eps\in(0,1]$.
Let $\eta= c(16(k-1),3t+1,t,6t)$ be as in Theorem~\ref{thm : boosted construction result}.
Let $\mathcal{C}$ be a $k$-breakable class of graphs, and let
$G \in \mathcal{C}$ be  an $\set{S_{t,t,t},K_{t,t}}$-free graph on $n$ vertices, where $n\ge 2$.
Let $w$ be a weight function on $G$. Let $X \subseteq G$ be such that $|X| < k$ and $N[X]$ is a $(w, \frac{1}{2})$-balanced separator for $G$.
Let $\beta = \beta(G,X)> 4\eta\log^2n$ and let $\beta'\in [\beta,\frac{4}{3} \beta]$.
Then, there exists $C\subseteq V(G)\setminus X$, such that $\alpha(C) \leq \eta \log^2n$ and $W(G\setminus C,X ,\beta')\leq W(G, X ,\beta')-\eps$.
\end{lemma}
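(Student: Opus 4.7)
The plan is to apply Theorem~\ref{thm : boosted construction result} to an induced subgraph $H$ of $G$ equipped with a weight function concentrated on a maximum independent set $Y_0$ of the set $Y := N(x^*)\cap N(B^*)$, where $(x^*,B^*)$ is a pair witnessing the supremum $\beta = \beta(G,X)$. Because $\beta$ is this extremal value, $\alpha(Y)=\beta$ and we may fix $Y_0\subseteq Y$ with $|Y_0|=\beta$. A convenient observation is that $Y\cap X = \emptyset$: for any $x\in X$ one has $N(x)\subseteq N[X]$, whereas $B^*\cap N[X]=\emptyset$, so $x$ has no neighbor in $B^*$ and hence $x\notin N(B^*)\supseteq Y$. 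Thus, if we set $H := G[B^*\cup Y]$, then $V(H)\cap X = \emptyset$, so any set produced inside $V(H)$ automatically lies in $V(G)\setminus X$.

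The parameter choices are designed to line up with Theorem~\ref{thm : boosted construction result}. First, $K_{3t+1}^{(2)}$ contains $S_{t,t,t}$ as an induced subgraph (starting at a single $K_{3t+1}$-vertex and chaining three edge-disjoint subdivided-edge walks uses only $1+3\lceil t/3\rceil\le 3t+1$ original vertices, and the chained construction is easily checked to be induced and pairwise anticomplete), so $G$ is $\{K_{3t+1}^{(2)},K_{t,t}\}$-free. Next, by Corollary~\ref{coro: separators are boosted separators } applied with $i=3$, the class $\mathcal{C}$ is $(16(k-1),0,1/8)$-breakable. Thus Theorem~\ref{thm : boosted construction result} applies to $(H,w')$---where $w'$ is uniform on $Y_0$---with parameters $16(k-1)$, $3t+1$, $t$, $6t$, $f\equiv 0$, $1/8$, and its associated constant is exactly $\eta=c(16(k-1),3t+1,t,6t)$. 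The output is a set $C\subseteq V(H)\subseteq V(G)\setminus X$ with $\alpha(C)\le \eta\log^2|V(H)|\le \eta\log^2 n$ and $(w',1/8)$-boosted separators $(S_i,C)$ satisfying the almost-disjointness properties of the theorem.

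The claim is that this $C$ decreases $W$ by at least $\eps$. It suffices to show that no big component $B'\subseteq B^*$ of $(G\setminus C)\setminus N[X]$ makes $(x^*,B')$ a $(X,\tfrac{3\beta'}{4})$-problematic pair in $G\setminus C$: the pair $(x^*,B^*)$ contributes $w(B^*)>\eps$ to $W(G,X,\beta')$, and every other pair's contribution can only decrease upon deletion of $C$ (removing vertices can only shrink $\alpha(N(x)\cap N(B))$ and can only fragment $B$), so the total drop is at least $w(B^*)$. The verification splits on the maximum-$w'$ component $\mathcal{H}^*$ of $H\setminus C$. In the easy case $w'(\mathcal{H}^*)\le 1/2$, every component $\mathcal{K}$ of $H\setminus C$ satisfies $|Y_0\cap\mathcal{K}|\le \beta/2$; for any independent $I\subseteq Y\cap\mathcal{K}$, the set $I\cup(Y_0\setminus(C\cup\mathcal{K}))$ is also independent in $G$ (since $H$ is induced and the two blocks lie in distinct components of $H\setminus C$, hence are anticomplete in $G$), so $|I|+|Y_0|-|Y_0\cap C|-|Y_0\cap\mathcal{K}|\le\alpha(Y)=\beta$. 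Combining with $|Y_0\cap C|\le\alpha(C)\le\eta\log^2 n<\beta/4$ (using the hypothesis $\beta>4\eta\log^2 n$) yields $|I|<\beta/4+\beta/2=\tfrac{3\beta}{4}\le\tfrac{3\beta'}{4}$. Since every $y\in Y\setminus C$ with a neighbor in $B'$ lies in the component of $H\setminus C$ containing $B'$, we conclude $\alpha(N_{G\setminus C}(x^*)\cap N_{G\setminus C}(B'))<\tfrac{3\beta'}{4}$.

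The hard case $w'(\mathcal{H}^*)>1/2$ is the main obstacle, and the plan is to rule it out by constructing a forbidden induced subgraph. Here more than $\beta/2>2\eta\log^2 n$ vertices of $Y_0$ survive in the single connected subgraph $\mathcal{H}^*$; moreover, each $S_i\cap\mathcal{H}^*$ is a $(w',1/8)$-balanced separator of $\mathcal{H}^*$, while each vertex of $\mathcal{H}^*$ meets at most $\tfrac{3\log|V(H)|}{96(k-1)t}$ of the $S_i$'s. The idea is to run a Ramsey/pigeonhole argument on this nearly-disjoint family of balanced separators, in the same spirit as the end-game of the sketched proof of Theorem~\ref{thm_claw_treealpha}, to extract three pairwise-anticomplete induced paths of length $t$ inside $\mathcal{H}^*$, each starting with an edge from $x^*$ to a distinct $y\in Y_0\cap\mathcal{H}^*$, producing an induced $S_{t,t,t}$ centered at $x^*$ (or, should the extraction instead force a dense bipartite configuration, an induced $K_{t,t}$), contradicting the $\{S_{t,t,t},K_{t,t}\}$-freeness of $G$. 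Carrying out this Ramsey-style extraction is the technical heart of the argument.
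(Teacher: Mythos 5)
Your setup closely mirrors the paper's: both proofs pick a problematic pair $(x^*,B^*)$ achieving $\beta$, take a maximum independent set $Y_0$ (the paper's $I$) of size $\beta$ inside $N(x^*)\cap N(B^*)$, put the uniform weight $w'$ on $Y_0$, invoke Corollary~\ref{coro: separators are boosted separators } with $i=3$ to get $(16(k-1),0,1/8)$-breakability, and feed everything through Theorem~\ref{thm : boosted construction result} with $\gamma=3t+1$, $\lambda=6t$. (The paper runs the theorem on $G\setminus X$ while you run it on the induced subgraph $H=G[B^*\cup Y]$; since $H\in\mathcal{C}$ and $Y\cap X=\emptyset$, both choices are legitimate, and your bookkeeping in the ``easy case'' correctly uses that two vertices in distinct components of $H\setminus C$ are non-adjacent in $G$ because $H$ is induced.) Your easy-case inequality reproduces the content of the paper's claim \smallref{claim weight reduction}, and your justification of $S_{t,t,t}$-free $\Rightarrow K_{3t+1}^{(2)}$-free is sound.

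However, there is a genuine gap: the ``hard case'' ($w'(\mathcal{H}^*)>\tfrac12$) is only sketched, and you explicitly defer it (``Carrying out this Ramsey-style extraction is the technical heart of the argument''). This is precisely the content of the paper's claim \smallref{claim 1/2 separator}, and it is the nontrivial part of the lemma. The paper's argument there is not really Ramsey-theoretic in the sense you gesture at: it samples three vertices $a_1,a_2,a_3$ of the big component at random according to $w'$, uses the $1/8$-balance of each $S_i$ to show that with probability at least $3/8$ a given $S_i$ separates the triple, then by linearity of expectation fixes a triple that is separated by at least $\tfrac{3\log n}{8}$ of the $S_i$'s; the almost-disjointness property of Theorem~\ref{thm : boosted construction result} (no vertex meets more than $\tfrac{3\log n}{32t(k-1)}$ of the $S_i$'s) then forces any path between two of the $a_i$'s to have at least $4t$ vertices. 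Taking shortest paths and truncating to $t$-vertex initial segments $Q_{ij}$ near each $a_i$ yields three pairwise anticomplete paths meeting $N[x^*]$ only at $a_i\in Y_0$, and since $B^*$ avoids $N[X]\supseteq N(x^*)$ the set $\{x^*\}\cup Q_{12}\cup Q_{23}\cup Q_{31}$ is an induced $S_{t,t,t}$, a contradiction. Your sketch names the right target ($S_{t,t,t}$ centered at $x^*$ with arms launching from distinct $y\in Y_0$) but does not supply the probabilistic separation argument nor the distance lower bound, and the fallback ``or an induced $K_{t,t}$'' is not needed in this step (the $K_{t,t}$-freeness is consumed earlier, inside Theorem~\ref{thm : boosted construction result}). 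Until this case is carried out, the proof is incomplete.
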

\begin{proof}
    Let $(x,B)$ be an $(X,\beta)$-problematic pair. Let  $I \subseteq N(x) \cap N(B)$
be a stable set of size $\beta$ (see Figure~\ref{fig:boosted separator}).
\begin{figure}[h]
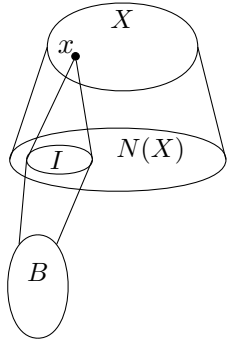

    \centering
    \tikzfig{boosted_separator}
      \caption{Drawing to keep in mind for the proof of Lemma \ref{lem : boosting one step}.}
    \label{fig:boosted separator}
\end{figure}

Define a new  weight function $w'\colon V(G) \rightarrow [0,1]$ where $w'(v) = \frac{1}{|I|}$ if $v \in I$ and $w'(v)=0$ otherwise.
By Corollary~\ref{coro: separators are boosted separators }, $G$ is   $(16(k-1),0,1/8)$-breakable.
Note that since $G$ is $S_{t,t,t}$-free, $G$ is also $K_{3t+1}^{(2)}$-free.
Applying Theorem~\ref{thm : boosted construction result} to $G\setminus X$ with $\gamma = 3t+1$ and $\lambda=6t$, we obtain sets  $C,S_1,\dots,S_{\lceil \log n \rceil } \subseteq V(G)$ with  $\alpha(C) \leq \eta \log^2n$ where $(S_i,C)$ is a $(w',1/8)$-boosted separator of $G\setminus X$ with a core of size less than $16(k-1)$ for every $ i$.
Moreover, if there is a component $G'$ of $G\setminus (X\cup C)$ with $w'(G')>\frac{1}{2}$, then no vertex of $G'$ is in more than $\frac{\log n}{32t(k-1)}$ of the sets $S_i$.

\sta{\label{claim 1/2 separator}$C$ is a $(w', 1/2)$-balanced separator in $G\setminus X$.}

Suppose not, and let  $G'$ be the unique
connected component of $G \setminus (X\cup C)$ with $w'(G') > \frac{1}{2}$.
Let $a_1,a_2,a_3\in V(G')$. We say that  $a_1a_2a_3$ is  {\em separated by $S_i$}
if for every component $D$ of $G' \setminus S_i$, $|D \cap \{a_1,a_2,a_3\}| \leq 1$.
Randomly choose three vertices $a_1,a_2, a_3$  of $G'$ using the normalized function of $w'$ on $G'$ as a probability distribution. Let $i \in \{1, \dots, \lceil \log n \rceil\}$.
Since $\frac{w'(D)}{w'(G')}\leq\frac{1}{8w'(G')} \leq \frac{1}{4}$ for every component $D$ of $G' \setminus S_i$, the probability that $a_1a_2a_3$ is separated by $S_i$ is
at least $\frac{3}{4}\cdot\frac{1}{2} = \frac{3}{8}$. Therefore, by the linearity of expectation, there exist $a_1,a_2,a_3 \in G'$  and $J \subseteq \{1, \dots, \lceil \log n \rceil\}$  with $|J| \geq \frac{3\log n}{8}$ such that $a_1a_2a_3$ is separated by $S_j$ for every $j \in J$.

For $1\le i,j\le 3$, $i\neq j$, let  $P_{ij}$ be a shortest path from $a_i$ to $a_j$ in $B$. Since no vertex of $G'$ belongs to more than $\frac{3\log n}{32t}$ of the sets $S_i$, and since $|J| \geq \frac{3\log n}{8}$, it follows that  $|P_{ij}| \geq 4t$ for every $i,j \in \{1,2,3\}$.

Let $Q_{ij}$ be the $t$ vertex subpath of $P_{ij}$ including $a_i$.
Since $Q_{ij}\setminus B = a_i$, it follows that $N_G[x]\cap Q_{ij}=\{a_i\}$.
Since $|P_{ij}| \geq  4t$ and by minimality of $P_{ij}$, it follows that $Q_{12}$, $Q_{23}$, and $Q_{31}$ are pairwise anticomplete.
Therefore, $x \cup  Q_{12}\cup Q_{23} \cup  Q_{31}$ is an $S_{t,t,t}$ with center $x$ in $G$ (see Figure~\ref{fig: claim 1/2 separator}), a contradiction.
This proves~\eqref{claim 1/2 separator}.
\begin{figure}[h]
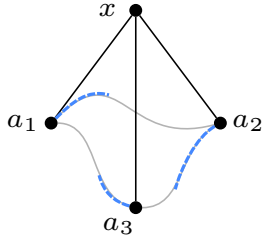

    \centering
    \scalebox{1.5}{\tikzfig{slll}}
    \caption{Visualization of the $S_{t,t,t}$ obtained in the proof of \eqref{claim 1/2 separator}.}
    \label{fig: claim 1/2 separator}
\end{figure}
\\
\\

\sta{\label{claim weight reduction} There is no $(y,B')\in \mathcal{P}(G\sm C, X, \beta')$ such that $y=x$ and $B'\subseteq B$.}

Let $B'$ be a component of $B\setminus C$ and let $D$ be the component of $G\setminus (X\cup C)$ containing $B'$.
By \eqref{claim 1/2 separator}, $|D\cap I| \leq |I|/2$. Therefore, $I'=I\setminus \left(D \cup C\right)$ is an independent set of size at least $|I|-|I|/2- \eta \log^2n > \beta/4$, and $I'$ is anticomplete to $D$. Since $\left(I' \cup N_{G\setminus C}(B')\right) \cap N_{G\setminus C}(x) \subseteq N_{G}(B) \cap N_{G}(x) $ and by the maximality of $\beta$, we have that
\[\beta \geq \alpha\left(\left(I' \cup N_{G\setminus C}(B')\right)\cap N_{G\setminus C}(x)\right) > \beta/4  + \alpha\left(N_{G\setminus C}(B')\cap N_{G\setminus C}(x)\right)\,,\]
where the second inequality follows from the fact that the sets $I'\cap N_{G\setminus C}(x) = I'$ and $ N_{G\setminus C}(B')\cap  N_{G\setminus C}(x)\subseteq D$ are disjoint and anticomplete to each other.
Therefore, $\alpha\left(N_{G\setminus C}(B')\cap N_{G\setminus C}(x)\right) \leq \frac{3}{4}\beta$ and so $(x,B')$ is not a $(X,\frac{3}{4}\beta)$-problematic pair in $G\setminus C$.   This proves  \eqref{claim weight reduction}.
\\
\\
Note that for every pair $(y,D)\in \mathcal{P}(G\sm C,X,\beta')$, the pair $(y,D')$ where $D'$ is the unique component of $G$ containing $D$, belongs to $\mathcal{P}(G,X,\beta')$.
Therefore,
\begin{align*}
    W(G\sm C,X,\beta') &= \sum_{(y,D)\in \mathcal{P}(G\sm C,X,\beta')} w(D)\\
    &= \sum_{(y,D')\in \mathcal{P}(G,X,\beta')} \sum_{\substack{D\subseteq D'\\(y,D)\in \mathcal{P}(G\sm C,X,\beta')}} w(D)\\
    &=  \sum_{\substack{(y,D')\in \mathcal{P}(G,X,\beta')\\(y,D')\neq (x,B)}} \sum_{\substack{D\subseteq D'\\(y,D)\in \mathcal{P}(G\sm C,X,\beta')}} w(D)\,,
\end{align*}
where the last equality holds by \eqref{claim weight reduction}.\\
\\
Now
\[\sum_{\substack{(y,D')\in \mathcal{P}(G,X,\beta')\\(y,D')\neq (x,B)}} \sum_{\substack{D\subseteq D'\\(y,D)\in \mathcal{P}(G\sm C,X,\beta')}} w(D)
    \leq \sum_{\substack{(y,D')\in \mathcal{P}(G,X,\beta')\\(y,D')\neq (x,B)}} w(D')\,.\]
We deduce
\[ \sum_{\substack{(y,D')\in \mathcal{P}(G,X,\beta')\\(y,D')\neq (x,B)}} \sum_{\substack{D\subseteq D'\\(y,D)\in \mathcal{P}(G\sm C,X,\beta')}} w(D)
    \leq \sum_{(y,D')\in \mathcal{P}(G,X,\beta')} w(D') -\eps
    = W(G,X,\beta') - \eps\,,\]
as required.

\end{proof}

\begin{corollary}\label{corollary reducing max beta once}
Let $k,t\in \nat$ with $k\ge 2$, $\eps\in(0,1]$.
Let $\eta= c(16(k-1),3t+1,t,6t)$ be as in Theorem~\ref{thm : boosted construction result}.
Let $\mathcal{C}$ be a $k$-breakable class of graphs, and let
$G \in \mathcal{C}$ be  an $\set{S_{t,t,t},K_{t,t}}$-free graph on $n$ vertices, where $n\ge 2$.
Let  $w$ be a weight function on $G$. Let $X \subseteq G$ be such that $|X| < k$ and $N[X]$ is a $(w, \frac{1}{2})$-balanced separator for $G$. Let $\beta = \beta(G,X) > 4\eta\log^2n$.
Then, there exists $C^*\subseteq V(G\sm X)$ such that $\alpha(C^*)\leq \frac{k}{\eps}\eta\log^2n$ and $W(G\setminus C^*,X,\beta) = 0$.
\end{corollary}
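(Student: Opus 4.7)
The plan is to iterate Lemma~\ref{lem : boosting one step}, each application removing a set of small independence number that strictly decreases $W(\cdot, X, \beta)$ by at least $\eps$. Starting from $G_0 := G$, proceed inductively: as long as $W(G_i, X, \beta) > 0$, apply Lemma~\ref{lem : boosting one step} to $G_i$ to obtain a set $C_i \subseteq V(G_i) \setminus X$ with $\alpha(C_i) \leq \eta \log^2 n$ and $W(G_i \setminus C_i, X, \beta) \leq W(G_i, X, \beta) - \eps$, then set $G_{i+1} := G_i \setminus C_i$ and continue.

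To justify each application, let $\beta_i := \beta(G_i, X)$. Since $X \subseteq V(G_i)$ and passage to an induced subgraph can only shrink $N(x)$ for $x \in X$ and $N(B)$ for any component $B$ meeting $G_i$, the set $N[X]$ remains a $(w, \tfrac{1}{2})$-balanced separator of $G_i$ and $\beta_i \leq \beta$. The existence of an $(X, \tfrac{3\beta}{4})$-problematic pair in $G_i$, guaranteed by $W(G_i, X, \beta) > 0$, gives $\beta_i \geq \tfrac{3\beta}{4}$. Hence $\beta \in [\beta_i, \tfrac{4}{3} \beta_i]$, so Lemma~\ref{lem : boosting one step} may be applied to $G_i$ with the choice $\beta' = \beta$, producing the required $C_i$.

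To bound the number of iterations, observe that for each $x \in X$ the weights of the big components of $G \setminus N[X]$ sum to at most $w(V(G)) \leq 1$, so
$W(G_0, X, \beta) = \sum_{(x,B)\in \mathcal{P}(G, X, \beta)} w(B) \leq |X| \leq k$.
Thus the process terminates after at most $\lceil k/\eps \rceil$ steps. Setting $C^* := \bigcup_i C_i$ (a disjoint union contained in $V(G) \setminus X$), subadditivity of $\alpha$ gives $\alpha(C^*) \leq \lceil k/\eps \rceil \cdot \eta \log^2 n \leq \tfrac{k}{\eps}\,\eta \log^2 n$, and by construction $G \setminus C^*$ equals the terminal $G_i$, so $W(G \setminus C^*, X, \beta) = 0$.

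The main technical point, which I expect to be the only delicate step, is verifying the precondition $\beta_i > 4\eta \log^2 n$ at each iteration. This follows from $\beta_i \geq \tfrac{3\beta}{4}$ combined with the hypothesis $\beta > 4\eta \log^2 n$, possibly after a slight tightening of constants (either strengthening the corollary's hypothesis by a constant factor, or equivalently absorbing the factor into $\eta$); all other bookkeeping—preservation of the separator, monotonicity of $\beta_i$, and termination—is routine.
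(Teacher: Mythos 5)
Your proposal is essentially the same as the paper's proof: iterate Lemma~\ref{lem : boosting one step} on $G_\ell := G \setminus \bigcup_{i<\ell} C_i$ with $\beta' = \beta$, use $W(G,X,\beta)\le |X|\le k$ to bound the number of iterations by $k/\eps$, and take $C^*$ to be the union of the removed sets; the verification that $\beta' = \beta$ lies in $[\beta_\ell, \tfrac{4}{3}\beta_\ell]$ because $W(G_\ell,X,\beta)>0$ forces $\beta_\ell \ge \tfrac{3}{4}\beta$ is exactly the observation the paper makes.

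The one concern you flag at the end is genuine, and in fact the paper's proof glosses over it in the same way. Lemma~\ref{lem : boosting one step}, applied to $G_\ell$, requires $\beta(G_\ell, X) > 4\eta\log^2 n$, but from $\beta_\ell \ge \tfrac{3}{4}\beta$ and the hypothesis $\beta > 4\eta\log^2 n$ you only get $\beta_\ell > 3\eta\log^2 n$. The paper's proof asserts ``the conditions of Lemma~\ref{lem : boosting one step} are satisfied'' after verifying only the $\beta' \in [\beta_\ell, \tfrac{4}{3}\beta_\ell]$ constraint, not the threshold constraint on $\beta_\ell$. Your proposed fix is the right one: replace the hypothesis $\beta > 4\eta\log^2 n$ by $\beta > \tfrac{16}{3}\eta\log^2 n$ (so that $\tfrac{3}{4}\beta > 4\eta\log^2 n$), or equivalently replace $\eta$ by $\tfrac{4}{3}\eta$; the adjusted constant then needs to be carried through Corollary~\ref{corollary reducing max beta} and Theorem~\ref{thm : boosting}, where the same $4\eta\log^2 n$ threshold reappears, but nothing else changes. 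So you've correctly reproduced the paper's argument and correctly identified a small quantitative inaccuracy in it.
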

\begin{proof}
    We recursively define $C_\ell\subseteq V(G\sm X)$ by applying Lemma \ref{lem : boosting one step} to $G\setminus \bigcup_{i< \ell}C_i$ with a weight function which is the appropriate restriction of $w$ and with $\beta'=\beta$.
    Let $\ell^*$ be the smallest $\ell$ such that $W(G\setminus\bigcup_{i\leq \ell}C_i,X,\beta) = 0$. Note that $\beta(G\setminus \bigcup_{i< \ell}C_i)\le \beta$ and, moreover, if $\beta(G\setminus \bigcup_{i< \ell}C_i)<\frac{3}{4}\beta$, then $W(G\setminus \bigcup_{i< \ell}C_i,X,\beta) = 0$, so the conditions of Lemma \ref{lem : boosting one step} are satisfied for $\ell\in \set{1,\dots,\ell^*-1}$.
    By the definition of the function $W$, we have that
    \[W(G,X,\beta)=\sum_{x\in X} \sum_{(x,B)\in \mathcal{P}(G,X,\beta)} w(B) \leq \sum_{x\in X} 1 = |X| \leq k\,.\]
    Therefore, $\ell^*\leq \frac{k}{\eps}$.
    Setting $C^*=\bigcup_{i\leq \ell^*} C_i$ completes the proof.
\end{proof}

\begin{corollary}\label{corollary reducing max beta}
    Under the same conditions as in Lemma \ref{lem : boosting one step}, there exist $C^*\subseteq V(G)\setminus X$ such that $\alpha(C^*) \leq \frac{k}{\eps}\eta\log^3n$ and the maximum value of $\gamma$ for which an $(X,\gamma)$-problematic pair exists in $G\setminus C^*$ is at most $4\eta \log^2n$.
\end{corollary}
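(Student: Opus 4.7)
The plan is to iterate Corollary~\ref{corollary reducing max beta once}. Set $G_0 = G$ and $\beta_0 = \beta(G,X)$, and recursively, while $\beta_{\ell-1} := \beta(G_{\ell-1}, X) > 4\eta \log^2 n$, apply Corollary~\ref{corollary reducing max beta once} to $G_{\ell-1}$ (with the same $X$ and the restriction of $w$) to obtain a set $C^*_\ell \subseteq V(G_{\ell-1})\setminus X$ satisfying $\alpha(C^*_\ell) \leq \frac{k}{\eps} \eta \log^2 n$ and $W(G_{\ell-1} \setminus C^*_\ell, X, \beta_{\ell-1}) = 0$. Then set $G_\ell = G_{\ell-1} \setminus C^*_\ell$, continue, and once the loop terminates after some $\ell^*$ steps, output $C^* = \bigcup_{\ell=1}^{\ell^*} C^*_\ell$.

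The key observation that drives the analysis is that $W(G_\ell, X, \beta_{\ell-1}) = 0$ forces $\beta_\ell < \frac{3}{4}\beta_{\ell-1}$: indeed, every big component in $G_\ell$ has $w$-weight strictly greater than $\eps > 0$, so $W(G_\ell, X, \beta_{\ell-1})=0$ rules out the existence of any $(X, \frac{3}{4}\beta_{\ell-1})$-problematic pair in $G_\ell$, and hence $\beta_\ell < \frac{3}{4}\beta_{\ell-1}$. This gives geometric decay of $\beta$ by a factor of at least $3/4$ per iteration. Since $\beta_0 \leq n$ and the stopping threshold is $4\eta \log^2 n$, at most $\ell^* = \lceil \log_{4/3}(n/(4\eta\log^2 n)) \rceil = O(\log n)$ iterations are needed. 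By subadditivity of $\alpha$, we then get $\alpha(C^*) \leq \sum_{\ell=1}^{\ell^*} \alpha(C^*_\ell) \leq \ell^* \cdot \frac{k}{\eps}\eta\log^2 n$, which matches the stated $\frac{k}{\eps}\eta\log^3 n$ bound after absorbing the constant $1/\log(4/3)$ into the constants defining $\eta$.

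The verifications that make this work are routine bookkeeping: at each step $G_{\ell-1}$ is an induced subgraph of $G$, hence lies in the $k$-breakable class $\mathcal{C}$ and is $\{S_{t,t,t}, K_{t,t}\}$-free, while $N[X]$ remains a $(w,1/2)$-balanced separator of $G_{\ell-1}$ because each component of $G_{\ell-1} \setminus N[X]$ is contained in a component of $G \setminus N[X]$ of weight at most $1/2$. The only genuinely interesting step is the geometric-decay observation above; everything else is preservation of hypotheses under induced subgraphs. Consequently, I do not expect any serious obstacle beyond verifying that the constant in the final bound can indeed be absorbed.
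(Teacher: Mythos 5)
Your proof is essentially identical to the paper's: both iterate Corollary~\ref{corollary reducing max beta once}, observe that $W(G_\ell,X,\beta_{\ell-1})=0$ forces $\mathcal{P}(G_\ell,X,\beta_{\ell-1})=\emptyset$ (since big components have positive weight) and hence $\beta_\ell\le\frac{3}{4}\beta_{\ell-1}$, and bound the number of rounds by $O(\log n)$. You are also right to flag the constant: the geometric decay with ratio $3/4$ gives $\ell^*\le\log_{4/3} n\approx 2.41\log_2 n$ rather than the paper's asserted $\ell^*\le\log n$, so strictly speaking the stated bound $\frac{k}{\eps}\eta\log^3 n$ picks up a factor of roughly $1/\log_2(4/3)$; the paper glosses over this, but it is harmless since the downstream applications (e.g.\ Theorem~\ref{thm : boosting}) only claim the existence of \emph{some} constant $c(k,t)$.
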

\begin{proof}
    We recursively define $C_\ell\subseteq V(G\sm X)$ by applying Corollary \ref{corollary reducing max beta once} to $G\setminus \bigcup_{i< \ell}C_i$ with $\beta_i$ where $\beta_i = \beta(G\setminus \bigcup_{i< \ell}C_i,X)$ (so $\beta_1 =\beta(G,X)$).
    Since $W(G\setminus \bigcup_{j\leq i} C_j,X,\beta_i) = 0$, we have that $\mathcal{P}(G\setminus \bigcup_{j\leq i} C_j,X,\beta_i)=\mt$.
    Therefore, $\beta_{i+1}\leq \frac{3}{4}\beta_i$ and so \[n\geq \beta_1 \geq \frac{4}{3}\beta_2\geq  \dots \geq \left(\frac{4}{3}\right)^i\beta_i\,.\]
    Let $\ell^*$ be the smallest $\ell$ such that $\beta_\ell \leq  4\eta \log^2n$.  We have that $\ell^*\leq \log n$, so setting $C^*=\bigcup_{i\leq \ell^*} C_i$ completes the proof.
\end{proof}

We can now prove the main result of this subsection.
\begin{theorem}

\label{thm : boosting} For all positive integers $k,t$ with $k\ge 2$, there exist
an integer $c=c(k,t)$ with the following property.
Let $\mathcal{C}$ be a $k$-breakable class of graphs.
Let $G \in \mathcal{C}$ be an $\set{S_{t,t,t},K_{t,t}}$-free  graph on $n$ vertices, where $n\ge 2$.
Let $\eps\in(0,1]$ and let
$f=f_{k,t}:\nat\rightarrow\reel_{\geq0}$  be the function $f(n)=\frac{c}{\eps}\log^3n$.
Then $G$ is $(k,f,\eps)$-breakable.
\end{theorem}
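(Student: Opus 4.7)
The plan is to produce the boosted separator by combining the $k$-breakability hypothesis with Corollary~\ref{corollary reducing max beta}, and then performing one short clean-up step that absorbs the remaining large pieces into the boosting set. Setting $\eta=c(16(k-1),3t+1,t,6t)$ as in Lemma~\ref{lem : boosting one step}, I would first use the $k$-breakability of $\mathcal{C}$ to choose $X\subseteq V(G)$ with $|X|<k$ such that $N[X]$ is a $(w,1/2)$-balanced separator of $G$; this is the core, $S=N[X]$. Next, if $\beta(G,X)>4\eta\log^2 n$, I would apply Corollary~\ref{corollary reducing max beta} to obtain $C^*\subseteq V(G)\setminus X$ with $\alpha(C^*)\le \frac{k\eta}{\eps}\log^3 n$ and $\beta(G\setminus C^*,X)\le 4\eta\log^2 n$, and otherwise set $C^*=\emptyset$, so that in either case the latter bound holds.

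Let $B$ be the maximum-weight component of $G\setminus C^*$. If $w(B)\le 1/2$, then $(N[X],C^*)$ is already a $(w,\eps)$-boosted separator and we are done with $C=C^*$. Otherwise, let $D_1,\dots,D_m$ be the components of $B\setminus N[X]$ of weight strictly greater than $\eps$; summing weights gives $m<1/\eps$. For each $i$, set $N_i=N_{G\setminus C^*}(D_i)$ and define $C=C^*\cup\bigcup_{i=1}^m N_i$. The key point is to bound $\alpha(N_i)$: since $D_i\cap N[X]=\emptyset$, no vertex of $D_i$ is adjacent to $X$, so every vertex of $N_i$ lies in $N(X)\setminus X$; hence $N_i=\bigcup_{x\in X}(N_{G\setminus C^*}(x)\cap N_i)$, and the bound on $\beta(G\setminus C^*,X)$ forces $\alpha(N_{G\setminus C^*}(x)\cap N_i)\le 4\eta\log^2 n$ for each $x\in X$. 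Summing over $|X|<k$ yields $\alpha(N_i)\le 4k\eta\log^2 n$, and thus
\[
\alpha(C)\le \alpha(C^*)+m\cdot 4k\eta\log^2 n\le \frac{5k\eta}{\eps}\log^3 n,
\]
which matches the required $(c/\eps)\log^3 n$ with $c=5k\eta=c(k,t)$.

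To finish, I would verify that $(N[X],C)$ is a $(w,\eps)$-boosted separator by a three-case analysis on the maximum-weight component $B'$ of $G\setminus C$. If $B'$ is contained in a component of $G\setminus C^*$ other than $B$, then $w(B')<1/2$ since $w(B)>1/2$. If $B'$ meets some $D_i$, then $N_G(D_i)\subseteq N_i\cup\bigl(N_G(D_i)\cap C^*\bigr)\subseteq C$ implies that $D_i$ is isolated in $G\setminus C$, hence $B'=D_i$; since $D_i$ is contained in a single component of $G\setminus N[X]$ and $N[X]$ is a $(w,1/2)$-balanced separator of $G$, we get $w(B')=w(D_i)\le 1/2$. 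Finally, if $B'\subseteq B$ and $B'$ is disjoint from every $D_i$, then every component of $B'\setminus N[X]$ is contained in a non-big component of $B\setminus N[X]$, and therefore has weight at most $\eps$, so $N[X]$ is a $(w,\eps)$-balanced separator of $B'$.

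The main obstacle has already been handled inside Corollary~\ref{corollary reducing max beta}, which exploits $\{S_{t,t,t},K_{t,t}\}$-freeness to drive the problematic-pair parameter $\beta$ down to $O(\log^2 n)$ at a cost of only $O(\log^3 n/\eps)$ in stability number. What remains here is the observation that once $\alpha(N(x)\cap N(D_i))$ is small for every $x\in X$ and every residual big component $D_i$, each such $D_i$ admits a low-$\alpha$ cut in $G\setminus C^*$; since there are at most $1/\eps$ such components, throwing all of these cuts into $C$ is affordable and yields a genuine $(w,\eps)$-boosted separator.
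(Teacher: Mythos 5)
Your proposal is correct and follows essentially the same approach as the paper's proof: use $k$-breakability to obtain the core $X$, apply Corollary~\ref{corollary reducing max beta} to reduce the problematic-pair parameter to $O(\log^2 n)$, then augment the boosting set by the neighborhoods of the at most $1/\eps$ residual big components of $B\setminus N[X]$, using the bound on $\beta(G\setminus C^*,X)$ together with $|X|<k$ to control $\alpha(N_i)$. The only cosmetic difference is in the verification step: where the paper argues via the auxiliary graph $G^*$ (the union of components of $G\setminus(C\cup Z)$ meeting $N_{G\setminus(C\cup Z)}[X]$), you do an explicit three-case analysis on the maximum-weight component $B'$ of $G\setminus C$; these amount to the same thing, since your case that $B'$ meets some $D_i$ is exactly the case that $S\cap N[X]=\emptyset$ which the paper excludes at the start, and your final case is the paper's observation that $G^*\setminus N[X]$ has no big components.
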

\begin{proof}

Let $w$ be a weight function on $G$. Since $G$ is $k$-breakable, we can find $X \subseteq G$ such that $|X| < k$ and $N[X]$ is a $(w, \frac{1}{2})$-balanced separator in $G$.
Let $\beta = \beta(G,X)$ and let $\eta= c(16(k-1),3t+1,t,6t)$ be as in Theorem~\ref{thm : boosted construction result}. If $\beta > 4\eta\log^2n$, we apply Corollary \ref{corollary reducing max beta} to obtain a set $C\subseteq V(G)\setminus X$. Otherwise, let $C=\mt$. In both cases, $\mathcal{P}(G\setminus C, X, 4\eta\log^2(n)+1) = \mt$ which implies that none of the remaining big components of $G\setminus(C\cup N[X])$ has an independent set of size $4k\eta\log^2n$ in its neighborhood. Let $Z = \bigcup_{B\in \mathcal{B}(G\setminus(C\cup N[X]))}N(B)$ (see Figure~\ref{fig : boosting}).
Since there are at most $\frac{1}{\eps}$ big components,  $\alpha(Z)\leq \frac{1}{\eps}4k\eta\log^2n$.

\sta{\label{claim: boosting works} $(N[X],C\cup Z)$ is a $(w,\eps)$-boosted separator in $G$.}

Let $S$ be a component of $G\setminus (C\cup Z)$ with $w(S)$ maximum. If $w(S)\leq1/2$, there is nothing to show. Therefore, we may assume that $w(S)>1/2$. Since $N[X]$ is a $(w,\frac{1}{2})$-balanced separator of $G$, $S\cap N[X] \neq \mt$. Let $G^*$ be the union of all the components of $G\setminus(C\cup Z)$ intersecting $N_{G\setminus(C\cup Z)}[X]$. It follows that $S\subseteq G^*$. Since none of the components of $G^*\setminus N[X]$ is big, $N[X]$ is a $(w,\eps)$-balanced separator for $G^*$. Therefore,
\[S\sm N[X] = \left(G^*\sm (G^*\sm S)\right) \sm N[X] = \left(G^*\sm N[X]\right) \sm (G^*\sm S)\] has no component of weight greater than $\eps$, as removing vertices cannot introduce a big component. This proves \eqref{claim: boosting works}.
\begin{figure}[h]
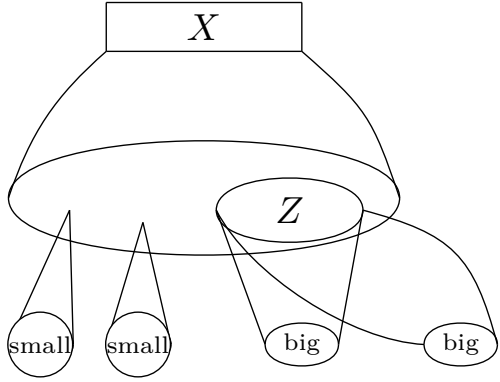

    \centering
    \scalebox{1.3}{\tikzfig{boosting}}
    \caption{Visualization of $G\sm C$ in Theorem~\ref{thm : boosting}.}\label{fig : boosting}
\end{figure}

Since $\alpha(C\cup Z)\leq
 \frac{1}{\eps}k\eta\log^3(n)+ \frac{1}{\eps}4k\eta\log^2n$, setting $c = 4k\eta$ completes the proof.
\end{proof}

\subsection{Improving the Disjointedness of the Separators}
\label{sec: disjoint}
Our next goal is to combine Theorems~\ref{thm:domsep}, \ref{thm : boosted construction result}, and \ref{thm : boosting}  to obtain a large set of boosted separators with pairwise anticomplete cores.

\begin{lemma}\label{lemma disjoint cores}
Let $k,N\in \nat$ and $R\in\reel_{\geq0}$. Let $G$ be a graph. Let $Y_1, \dots, Y_{\lceil kRN\rceil}$ be subsets of $V(G)$ of size less than $k$.
Write $S_i=N[Y_i]$.
Assume that for all $j\in \{1,\ldots, \lceil kRN\rceil\}$, no vertex of $Y_j$ belongs to more than $R$ of the sets $S_i$ with $i<j$.
Then there exists $I \subseteq \{1, \dots, \lceil kRN\rceil\}$
with $|I|= N$ such that for every $i \neq j \in I$, $Y_i$ is anticomplete to $Y_j$.
\end{lemma}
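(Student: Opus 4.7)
The plan is to reduce finding the required index set to producing an independent set in an auxiliary graph built from the natural ordering of the $Y_i$'s, and then apply a standard degeneracy/greedy argument (analogous to the one invoked via Lemma~5.2 of~\cite{TW10} earlier in the paper).

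Set $m=\lceil kRN\rceil$ and define an undirected graph $\Gamma$ on vertex set $[m]$ by making $i$ and $j$ adjacent (for $i<j$) precisely when $Y_i$ is not anticomplete to $Y_j$. Unfolding the definition of $S_i=N[Y_i]$, this happens exactly when $Y_j\cap S_i\neq\emptyset$ (and equivalently $Y_i\cap S_j\neq\emptyset$, so the relation is symmetric), and therefore the desired $I$ is exactly an independent set of size $N$ in $\Gamma$.

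Next I would bound the degeneracy of $\Gamma$ using the given linear ordering of the indices. For any nonempty $T\subseteq[m]$, take $j=\max T$. The neighbors of $j$ in $\Gamma[T]$ are precisely the $i\in T$ with $i<j$ and $Y_j\cap S_i\neq\emptyset$; for each such $i$ some witness vertex of $Y_j$ lies in $S_i$. Since by hypothesis each $v\in Y_j$ belongs to at most $R$ of the sets $S_i$ with $i<j$, summing over $v\in Y_j$ gives
\[
\deg_{\Gamma[T]}(j)\;\leq\;|Y_j|\cdot R\;\leq\;(k-1)R.
\]
Hence every induced subgraph of $\Gamma$ contains a vertex of degree at most $(k-1)R$, so $\Gamma$ is $(k-1)R$-degenerate.

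Finally, the classical greedy procedure that repeatedly removes a minimum-degree vertex together with its neighborhood yields an independent set in $\Gamma$ of size at least $m/((k-1)R+1)$; since $m\geq kRN$, this is at least $N$ whenever $R\geq 1$, which is the regime of interest in the applications of this lemma. Any $N$-element subset of such an independent set supplies the required $I$. The only real nuisance is the integer rounding in the degeneracy bound, but this is routine; in the degenerate regime $R<1$ the hypothesis forces each $Y_j$ to be disjoint from every $S_i$ with $i<j$, so $Y_1,\ldots,Y_m$ are automatically pairwise anticomplete and any $N$ indices suffice.
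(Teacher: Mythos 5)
Your proof is correct and follows essentially the same route as the paper's: both build the auxiliary graph on the index set with adjacency given by non-anticompleteness, bound its degeneracy using the hypothesis that each vertex of $Y_j$ lies in at most $R$ of the earlier closed neighborhoods together with $|Y_j|<k$, and then extract an independent set of size $N$ via the standard greedy argument. Your version is, if anything, slightly more careful in the degree count (using $|Y_j|\leq k-1$ rather than $k$) and in explicitly addressing the rounding and the $R<1$ regime.
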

\begin{proof}
Let $H$ be the graph with vertex set $\{Y_1,\ldots, Y_{\lceil kRN\rceil}\}$, with $Y_i$ adjacent to $Y_j$ for $i\neq j$ if and only if $Y_i$ and $Y_j$ are not anticomplete to each other.
Since $|Y_j|<k$ for all $j$, the graph $H$ is $\lfloor kR \rfloor$-degenerate, and therefore has a stable set of size $N$, as required.
\end{proof}

\begin{lemma}\label{lemma: no vertex in t separator}
Let $t,N>1$ be integers, and let $G$ be a $K_{t,t}$-free graph. Let  $w$ be a weight function on $G$.
Let $Y_1, \dots, Y_N$ be pairwise anticomplete subsets of size at most $k$ of $V(G)$,
and for every $i \in \{1, \dots, N\}$ write $S_i=N[Y_i]$.
Let $Z= \set{v \in V(G) \colon \abs{\set{i \colon v\in S_i}}\geq t}$.
Then $\alpha(Z) \leq \binom{N}{t} tk^t$.

\end{lemma}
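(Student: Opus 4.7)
The plan is to bound $\alpha(Z)$ by a double pigeonhole argument, using the $K_{t,t}$-freeness only at the very end. Let $I \subseteq Z$ be any independent set; I want to show $|I| \leq (t-1)\binom{N}{t}k^t$, which in particular gives the claimed bound.

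For each $v \in I$, fix a $t$-element subset $T(v) \subseteq \{i : v \in S_i\}$, which is possible by the definition of $Z$. Since there are $\binom{N}{t}$ choices for $T(v)$, by pigeonhole there exist a $t$-set $T \subseteq \{1,\dots,N\}$ and a subset $I' \subseteq I$ with $|I'| \geq |I|/\binom{N}{t}$ such that $T(v)=T$ for all $v \in I'$. Now I claim that for each $v\in I'$ and each $i\in T$, $v$ has a genuine neighbor in $Y_i$ (rather than being in $Y_i$ itself). Indeed, recalling the paper's convention that anticomplete sets are disjoint, the $Y_j$'s are pairwise disjoint and pairwise edgeless; so if $v\in Y_i$ for some $i\in T$, then for any $j\in T\setminus\{i\}$ we have $v\notin Y_j$ and $v$ has no neighbors in $Y_j$, contradicting $v\in S_j=N[Y_j]$ (this uses $t\ge 2$).

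Therefore, for every $v\in I'$ and every $i\in T$, I can choose a neighbor $f_i(v)\in Y_i$ of $v$. Since $|Y_i|\le k$, there are at most $k^t$ possible tuples $(f_i(v))_{i\in T}$, so another application of pigeonhole yields a subset $I''\subseteq I'$ with $|I''|\ge |I'|/k^t$ and a common tuple $(y_i)_{i\in T}$ (with $y_i\in Y_i$) such that every $v\in I''$ is adjacent to every $y_i$.

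Finally, $I''$ is independent (as a subset of $I$), and $\{y_i : i\in T\}$ is also independent of size $t$, because the $Y_i$'s are pairwise anticomplete. Hence $I''\cup\{y_i:i\in T\}$ contains a complete bipartite subgraph $K_{|I''|,t}$ in $G$. Since $G$ is $K_{t,t}$-free, we conclude $|I''|\le t-1$, which chains back to
\[
|I| \;\le\; \binom{N}{t}\,k^t\,|I''| \;\le\; (t-1)\binom{N}{t}k^t \;\le\; t\binom{N}{t}k^t,
\]
as desired. The argument is essentially a two-step bucketing, and there is no real obstacle: the only subtlety is verifying that a vertex of $I'$ cannot itself lie in any $Y_i$ with $i\in T$, which is exactly where the anticompleteness/disjointness hypothesis is used together with $t\ge 2$.
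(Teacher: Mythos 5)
Your proof is correct and uses essentially the same double-pigeonhole argument as the paper: first bucket the vertices of an independent set in $Z$ by a common $t$-set of indices, then bucket by a common choice of neighbor tuple in $\prod_{i\in T} Y_i$, and finally invoke $K_{t,t}$-freeness; your observation that a vertex of $Z$ cannot lie in any $Y_i$ with $i \in T$ is the same as the paper's claim that $Z \cap Y_i = \emptyset$. The only difference is minor bookkeeping, which incidentally gives you the marginally sharper bound $(t-1)\binom{N}{t}k^t$.
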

\begin{proof}
We start with the following:

\sta{$Z \cap Y_i=\emptyset$ for every $i \in \{1, \dots, N\}$. \label{ZY}}

Suppose there is a vertex $z \in Z \cap Y_1$.  Since $t>1$, we may assume that $z \in S_2$.
But  since $z \in Y_1$ and since  the sets $Y_1, \dots, Y_N$ are pairwise
anticomplete, it follows that $z$ is anticomplete to $Y_2$, a contradiction.
This proves~\eqref{ZY}.
\\
\\
Now let  $I$ be a largest stable set in $Z$.
We may assume that $|I|>  \binom{N}{t} t k^t$.
Then $I$ contains a subset $I_1$ of size $tk^t$ such that every $v \in I_1$ is in $S_1,\dots,S_t$ (renumbering $S_1,\dots,S_N$ if necessary). Since by \eqref{ZY} $Z\cap Y_i = \mt$ for every $i$, it follows that every vertex in $I_1$ has a neighbor in each of $Y_1,\dots,Y_t$. Now there exists a subset $I_2$ of $I_1$ of size $t$ such that for every $a \in \{1, \dots, t\}$ there exists $y_a \in Y_a$, for which every $v \in I_2$ is adjacent to $y_a$.
But now $I_2 \cup \{y_1,\dots,y_t\}$ is a $K_{t,t}$ in $G$, a contradiction.
\end{proof}

We can now prove the main result of this subsection.

\begin{theorem} \label{thm:disjoint separators}
Let $t$ be a positive integer and let $\eps\in(0,1]$.
Then, there exists an integer $c=c(t,\eps)$ with the following properties.
Let $d = d(t)$ be as in Theorem~\ref{thm:domsep}.
Let $G \in \mathcal{M}_t$ with $|G|=n \ge 2$ and let $w$ be a weight function on $G$.
Then, there is a set $X \subseteq V(G)$ with  $\alpha(X) \leq c \log^4 n$, such that, denoting by $D$ a component of $G \setminus X$ with $w(D)$ maximum, either
\begin{itemize}
\item $w(D) \leq \frac{1}{2}$ (and therefore $X$ is a  $(w, \frac{1}{2})$-balanced separator of $G$), or
\item $w(D) > \frac{1}{2}$ and
there exist  pairwise anticomplete subsets  $Y_1, \dots Y_{8t^2d}$
of $G$ such that
\begin{itemize}
\item for every $i$,  $|Y_i| <d$, and
\item  $N[Y_i]\cap D$ is a $(w, \epsilon)$-balanced separator of $D$, and
\item no vertex of $D$ is in more than $t$ of the sets $N[Y_i]$.
\end{itemize}

\end{itemize}
\end{theorem}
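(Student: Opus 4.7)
The plan is to cascade Theorems~\ref{thm:domsep}, \ref{thm : boosting}, and \ref{thm : boosted construction result}, then prune the resulting family of separators using Lemmas~\ref{lemma disjoint cores} and \ref{lemma: no vertex in t separator}. Since $\mathcal{M}_t \subseteq \mathcal{M}_t^*$, Theorem~\ref{thm:domsep} implies that $\mathcal{M}_t$ is $d$-breakable, and Theorem~\ref{thm : boosting} applied with $k=d$ upgrades this to $(d,f,\eps)$-breakability with $f(n)=\frac{c_1}{\eps}\log^3 n$ for some $c_1=c_1(t)$. Set $\lambda := 24 t^2 d$, note that $S_{t,t,t}$-freeness implies $K_{3t+1}^{(2)}$-freeness, and apply Theorem~\ref{thm : boosted construction result} with $k=d$, $\gamma = 3t+1$, this $t$, and this $\lambda$. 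The output is a set $C \subseteq V(G)$ with $\alpha(C) = O(\log^4 n/\eps)$, together with $\lceil \log n \rceil$ $(w,\eps)$-boosted separators $(S_i, C)$ whose cores $X_i$ satisfy $|X_i|<d$, and such that no vertex of any $X_j$ lies in more than $R := \frac{3\log n}{d\lambda} = \frac{\log n}{8t^2 d^2}$ of the $S_i$ with $i<j$.

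Let $B$ be a max-weight component of $G \setminus C$. If $w(B) \le \frac{1}{2}$, put $X := C$ and the first alternative of the theorem holds. Otherwise $w(B) > \frac{1}{2}$. The choice of $\lambda$ was calibrated so that $d \cdot R \cdot 8 t^2 d = \log n$, whence $\lceil d R \cdot 8 t^2 d \rceil = \lceil \log n \rceil$ equals the number of separators produced; this is precisely the hypothesis of Lemma~\ref{lemma disjoint cores} with $k=d$, $N=8t^2 d$, and this $R$. That lemma returns $8t^2 d$ of the cores that are pairwise anticomplete; relabel them $Y_1, \ldots, Y_{8t^2 d}$. Now Lemma~\ref{lemma: no vertex in t separator} (which invokes $K_{t,t}$-freeness) applied to this family shows that the set
\[ Z \;=\; \{\,v \in V(G) : v \text{ belongs to at least } t \text{ of the sets } N[Y_1],\ldots,N[Y_{8t^2 d}]\,\} \]
satisfies $\alpha(Z) \le \binom{8t^2 d}{t} t d^t$, a constant depending only on $t$. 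Define $X := C \cup Z$, so $\alpha(X) \le \alpha(C) + \alpha(Z) = O(\log^4 n/\eps)$, which we absorb into a single constant $c=c(t,\eps)$.

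Finally, suppose the max-weight component $D$ of $G \setminus X$ has $w(D) > \frac{1}{2}$; then $D \subseteq B$, since $B$ is the unique component of $G \setminus C$ of weight exceeding $\frac{1}{2}$. The bounds $|Y_i|<d$ and pairwise anticompleteness are immediate from the construction. The $(w,\eps)$-balanced separator property of $N[Y_i] \cap D$ follows because $(S_i, C)$ is a $(w,\eps)$-boosted separator and $w(B) > \frac{1}{2}$, so $N[Y_i] \cap B = S_i \cap B$ is a $(w,\eps)$-balanced separator of $B$; every component of $D \setminus N[Y_i]$ is contained in a component of $B \setminus N[Y_i]$ and so inherits the weight bound. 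Finally, since $D$ is disjoint from $Z$, every vertex of $D$ lies in fewer than $t$ of the sets $N[Y_i]$. The only real delicacy in the argument is calibrating $\lambda$ so that Lemma~\ref{lemma disjoint cores} has exactly enough material to work with while keeping $R$ small enough to be meaningful; the choice $\lambda = 24 t^2 d$ threads this needle, and everything else is bookkeeping.
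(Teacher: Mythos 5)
Your proposal is correct and follows essentially the same route as the paper: Theorem~\ref{thm:domsep} $\to$ Theorem~\ref{thm : boosting} $\to$ Theorem~\ref{thm : boosted construction result} with the same parameter choice $\lambda = 24t^2d$, then Lemma~\ref{lemma disjoint cores} and Lemma~\ref{lemma: no vertex in t separator}, finishing with $X = C \cup Z$. The only detail the paper adds is the harmless assumption $d \ge 2$ (so that the $k\ge 2$ hypothesis of Theorem~\ref{thm : boosting} is met), which you implicitly rely on but do not mention.
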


\begin{proof}
Let $N= 8t^2d$ and $\lambda = 3N$.
By Theorem \ref{thm:domsep}, we know that $\mathcal{M}_t^*$ is $d$-breakable.
We may assume that $d\ge 2$.
It follows that Theorem \ref{thm : boosting} applies and there exists  $c_1=c(d,t)$ such that $G$ is $(d,f,\eps)$-breakable where $f(n)=\frac{c_1}{\eps}\log^3n$.
We can therefore apply Theorem \ref{thm : boosted construction result} to get $c_2=c(d,3t+1,t,\lambda)$ and $C,Y_1,\dots,Y_{\lceil \log n \rceil} \subseteq V(G)$ such that (writing $S_i=N[Y_i]$)
\begin{itemize}
    \item $\alpha(C) \leq c_2\log^4n$;
    \item for every $i$, $|Y_i|<d$;
    \item for every $i$,  $(S_i,C)$ is a $(w,\eps)$-boosted separator of $G$;
    \item if there is a component $G'$ of $G\setminus C$ with $w(G')>\frac{1}{2}$, then no vertex of $G'$ is contained in more than $\frac{3\log n}{d\lambda}$ of the sets $S_1, \dots, S_{\lceil \log n \rceil}$;
    \item for every $j\in \{1,\ldots, \lceil \log n \rceil\}$, no vertex of $Y_j$ is contained in more than $\frac{3\log n}{d\lambda}$ of the sets $S_i$ with $i<j$.
    \end{itemize}

Let $G'$ be the component of $G\setminus C$ with $w(G')$ maximum.
Let $c = c_2+\binom{N}{t}td^t$.
We may assume that $w(G')>1/2$ as otherwise, we are done.
Applying Lemma \ref{lemma disjoint cores} to $G'$ with $k = d$ and $R=\frac{3\log n}{d\lambda}$
we obtain  $I \subseteq \{1, \dots, \lceil kRN\rceil\}$
with $|I|= N$ such that for every $i \neq j \in I$, $Y_i$ is anticomplete to
$Y_j$.  Renumbering if necessary, we may assume that $I=\{1, \dots, N\}$.
Applying Lemma \ref{lemma: no vertex in t separator} to the sets $Y_1, \dots, Y_N$ gives a set $Z$ with $\alpha(Z)\leq \binom{N}{t} td^t = c_3$  such that every vertex of $G \setminus Z$ is in fewer than $t$ of the sets $S_1, \dots, S_N$.
Let $X=C \cup Z$. Then $\alpha(X) \leq c_3+c_2\log^4n$.
Let $D$ be the component of $G\setminus X$ with $w(D)$ maximum.  If
$w(D)\leq 1/2$, then the first alternative in the statement of the theorem holds,
 and we are done. Therefore, we may assume that $w(D)> 1/2$.
 But now the second alternative in the statement of the theorem holds.
\end{proof}

\section{Bringing it all together} \label{sec:claw}

The final key ingredient in our proof of Theorem \ref{thm claw} is the following lemma.
By a \textit{caterpillar}, we mean a tree $T$ with maximum degree of three such that there exists a path that contains all the vertices of degree three. For a graph $H$, we denote by $\mathcal{Z}(H)$ the set of vertices whose neighborhood is a clique.

\begin{lemma}[Theorem 5.2 of \cite{tw15}]
\label{lem:conntw15}
For every integer $h \geq 1$, there exists an integer $\mu = \mu(h) \geq 1$ with the following property. Let $G$ be a connected graph. Let $Y \subseteq G$ such that $|Y| \geq \mu$, $G\setminus Y$ is connected and every vertex of $Y$ has a neighbor in $G \setminus Y$. Then there is a set $Y' \subseteq Y$ with $|Y'| = h$ and an induced subgraph $H$ of $G \setminus Y$ for which one of the following holds.
\begin{itemize}
\item $H$ is a path and every vertex of $Y'$ has a neighbor in $H$; or
\item $H$ is a caterpillar, or the line graph of a caterpillar, or a subdivided star, or the line graph of a subdivided star. Moreover, every vertex
of $Y'$ has a unique neighbor in $H$ and $H \cap N(Y') = \mathcal{Z}(H)$.
\end{itemize}

\end{lemma}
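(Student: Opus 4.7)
My plan is to adapt the minimum-connected-subgraph technique that is standard in proofs of three-in-a-tree-style theorems. Since $G \setminus Y$ is connected and every $y \in Y$ has a neighbor in $G \setminus Y$, there exists a connected induced subgraph $T_0$ of $G \setminus Y$ such that every vertex of $Y$ has a neighbor in $T_0$; choose such a $T_0$ with $|V(T_0)|$ minimum. By minimality, $T_0$ is an induced tree, and every leaf $\ell$ of $T_0$ admits a ``private'' vertex $y_\ell \in Y$ whose unique neighbor in $T_0$ is $\ell$ (otherwise $T_0 \setminus \ell$ would still satisfy the hypothesis). Let $B \subseteq V(T_0)$ denote the set of vertices of degree at least three in $T_0$.

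The proof then splits on whether $|B|$ is large or small. If $|B|$ is large compared to $h$, then $T_0$ contains a subdivision of a star with many leaves; restricting to the private $Y$-neighbors at those leaves yields a subdivided star (or its line graph) $H$ together with a set $Y' \subseteq Y$ of size $h$ satisfying the second alternative. If $|B|$ is small, then $T_0$ is a subdivision of a caterpillar. I would apply a Ramsey-type pigeonhole to $Y$ partitioned by how its vertices attach to $T_0$: either via a single vertex of $T_0$, yielding the caterpillar/subdivided-star conclusion, or via two consecutive vertices (an edge of $T_0$), yielding the line-graph versions, since the edges used as attachment points correspond precisely to vertices in a line-graph representation. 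Within each of these cases I would further distinguish whether many $Y$-vertices have large attachment (yielding the path first alternative, where uniqueness is not required) or whether each has bounded attachment (yielding the unique-attachment second alternative).

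The main obstacle is the final cleanup step: trimming $H$ so that it lands in exactly one of the four target classes and the identity $H \cap N(Y') = \mathcal{Z}(H)$ holds, while still keeping $|Y'| \geq h$. Concretely, one needs to iteratively delete vertices of $H$ that are either adjacent to some vertex of $Y'$ while having non-clique neighborhood in $H$, or have clique neighborhood while not being adjacent to $Y'$; each such deletion must be compensated by pruning $Y'$ without destroying its size. Each round of Ramsey pigeonholing plus trimming introduces a blow-up in the required lower bound for $|Y|$, so the correct choice of $\mu(h)$ is a tower of exponentials in $h$ absorbing all these losses. The rigidity of induced trees with private leaves and the limited number of target classes ensures that the case analysis terminates, but verifying the clique-neighborhood equality at every leaf of the pruned subgraph is where the bulk of the work lies.
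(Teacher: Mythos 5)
The paper does not prove this lemma; it cites it verbatim as Theorem~5.2 of \cite{tw15}, so there is no in-paper proof to compare against. I will evaluate your sketch on its own merits.

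Your overall plan (take a vertex-minimal connected induced subgraph $T_0$ of $G\setminus Y$ dominating $Y$, use minimality to get ``private'' leaves, then case-split on the branch structure and apply Ramsey-style pigeonholing and trimming) is in the right spirit of the ``connectifier'' arguments used in \cite{tw15} and its predecessors. However, there is a genuine gap at the very first structural step: you assert that by minimality $T_0$ is an \emph{induced tree}. This is false in general. A vertex-minimal connected \emph{induced} subgraph of an arbitrary graph can contain cycles, and in particular triangles; for example, if $G\setminus Y$ is a triangle $\{a,b,c\}$ and $Y=\{y_a,y_b,y_c\}$ with each $y_x$ adjacent in $G\setminus Y$ only to $x$, then the unique $T_0$ dominating $Y$ is the triangle itself. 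It is precisely these forced triangles in $T_0$ that give rise to the ``line graph of a caterpillar'' and ``line graph of a subdivided star'' alternatives in the conclusion. Your sketch tries to recover the line-graph cases by considering $Y$-vertices attached to an \emph{edge} of a tree $T_0$, but that is not where they come from: if $T_0$ really were a tree, nothing in the argument would force the output $H$ to be a line graph at all. So the case analysis as written both starts from a false structural premise and does not actually generate all the listed outcomes.

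Beyond that, two secondary issues: (i) a tree with few branch vertices is not automatically a subdivision of a caterpillar (the branch vertices need not lie on a common path, and their degrees need not be $3$), so some additional pruning/Ramsey step is needed there; (ii) the final cleanup establishing $H\cap N(Y')=\mathcal{Z}(H)$ together with \emph{unique} attachment for each vertex of $Y'$ is dismissed as ``where the bulk of the work lies'' without any mechanism sketched; since this equality is exactly what distinguishes the second alternative from the first (where multiple attachments on a path are allowed), it cannot be deferred. A correct proof must first establish the right structural dichotomy for minimal connected induced dominating subgraphs — namely that their blocks are edges or triangles and their branch structure is bounded after pruning — and only then run the pigeonhole and trimming. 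As it stands the argument would not go through.
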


We prove a slightly modified version of Lemma~\ref{lem:conntw15} that will be more convenient for our use.

\begin{lemma}\label{lemma : connectifier}
For every integer $h \geq 1$, there exists an integer $\mu = \mu(h) \geq 1$ with the following property. Let $G$ be a connected graph. Let $S \subseteq G$  be a stable set such that $|S| \geq \mu$. Then there is an induced subgraph $H$ of $G$ for which one of the following conditions holds:
\begin{itemize}
\item $H$ is a path and $|H \cap S| = h$.
\item $H$ is a caterpillar, or the line graph of a caterpillar, or a subdivided star, or the line graph of a subdivided star with $|H \cap S| = h$ and $H \cap S = \mathcal{Z}(H)$.
\end{itemize}

\end{lemma}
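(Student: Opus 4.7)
The plan is to derive Lemma~\ref{lemma : connectifier} from Lemma~\ref{lem:conntw15} by first exhibiting a suitable induced subtree of $G$ to which the original lemma applies, and then post-processing its output. I set $\mu(h) := \mu_0(N)$, where $\mu_0$ is the function from Lemma~\ref{lem:conntw15} and $N = N(h)$ is a sufficiently large polynomial in $h$, chosen so that a pigeonhole argument in the post-processing step yields $h$ vertices of $S$ in $H$.

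The first step is to construct an induced subtree $T$ of $G$ such that (i) $|T \cap S| \geq N$, (ii) every leaf of $T$ lies in $S$, and (iii) no interior vertex of $T$ lies in $S$. The plan is to begin with a minimum subtree $T_0$ of $G$ containing a large prescribed number of vertices of $S$; by minimality, every leaf of $T_0$ lies in $S$. If some interior vertex of $T_0$ lies in $S$, it is necessarily a cut vertex of $T_0$, and I recurse into the largest branch obtained by deleting it, losing a controlled fraction of $S$-vertices per pruning step. A final refinement replaces the resulting tree by an induced subtree of $G$ (taking a spanning tree of the appropriate induced subgraph) while preserving properties~(ii) and~(iii).

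With $T$ in hand, I apply Lemma~\ref{lem:conntw15} to $T$ with $Y$ equal to the set of leaves of $T$ (a subset of $S$). The hypotheses hold: $T$ is connected, $T \setminus Y$ consists of the interior of $T$ and is connected (since $T$ is a tree), and every leaf has its unique tree-neighbor in $T \setminus Y$. The lemma produces an induced subgraph $H_0 \subseteq T \setminus Y$ (hence an induced subgraph of $G$ disjoint from $S$) of one of the four structured types, together with a set $Y' \subseteq Y \subseteq S$ of size $N$ and, in case (b) of Lemma~\ref{lem:conntw15}, a map $\phi\colon Y' \to \mathcal{Z}(H_0)$ sending each $y \in Y'$ to its unique $H_0$-neighbor.

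Finally, I build $H$ by pigeonhole on $\phi$. Either some $z \in \mathcal{Z}(H_0)$ satisfies $|\phi^{-1}(z)| \geq h$, in which case I set $H := \{z\} \cup \phi^{-1}(z)$; this is a star $K_{1,h}$, which is a subdivided star with $\mathcal{Z}(H) = \phi^{-1}(z) = H \cap S$ of size $h$. Or at least $h$ distinct elements $z_1, \dots, z_h$ of $\mathcal{Z}(H_0)$ have non-empty $\phi$-preimage, in which case I pick $y_i \in \phi^{-1}(z_i)$, prune $H_0$ to a subgraph $H_0^\star$ of the same structured type with $\mathcal{Z}(H_0^\star) = \{z_1, \dots, z_h\}$, and set $H := V(H_0^\star) \cup \{y_1, \dots, y_h\}$. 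A case analysis over the four possible types verifies that adjoining one pendant $y_i$ at each $z_i$ extends each leg (or the corresponding line-graph feature) by one step and preserves the type, while forcing $\mathcal{Z}(H) = \{y_1, \dots, y_h\} = H \cap S$. This yields case~(b); case~(a) (with $H$ a path) arises as a degenerate instance of this construction when the pruned structure reduces to a path through the $y_i$'s. The main obstacle is the construction of the induced subtree $T$ with no interior vertex in $S$, since a naive minimum Steiner tree may contain $S$-vertices at cut-vertex positions; the pruning argument that handles this while preserving enough $S$-leaves is the key technical point, the remaining post-processing being routine case analysis.
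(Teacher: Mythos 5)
Step~1 of your plan --- constructing an induced subtree $T$ of $G$ in which every leaf lies in $S$ and \emph{no} interior vertex lies in $S$, while retaining many $S$-vertices --- is not achievable in general, and the pruning strategy you sketch does not repair this. Consider a spider: a center $c$ with $n$ legs $c\dd m_i\dd \ell_i$ of length two, and take $S=\{c,\ell_1,\dots,\ell_n\}$, which is stable. Any subtree of $G$ containing at least two of the leaves $\ell_i$ must contain $c$ as an interior vertex, so property~(iii) forces $|T\cap S|\le 2$. Your pruning step is also not a ``controlled'' loss: recursing into the largest branch after deleting a cut vertex $v$ is only guaranteed to retain about a $1/\deg(v)$ fraction of the $S$-vertices, and $\deg(v)$ is unbounded --- in the spider, deleting $c$ leaves branches each containing a single $S$-vertex. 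No choice of $N(h)$ compensates for a loss that scales with the maximum degree of the Steiner tree.

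The paper sidesteps the Steiner-tree construction entirely with a cleaner reduction: it forms $G'$ from $G$ by attaching a fresh pendant vertex $u_v$ to each $v\in S$, sets $Y=\{u_v:v\in S\}$, and applies Lemma~\ref{lem:conntw15} directly to $(G',Y)$ with parameter $h^2$. The hypotheses hold automatically ($G'\setminus Y=G$ is connected and each $u_v$ has its unique neighbor $v$ in $G$), the output $H$ lives in $G$, and the chosen $Y'$ of size $h^2$ pins down exactly which vertices of $S$ form $\mathcal{Z}(H)$. Crucially, this reduction imposes no a priori constraint on where $S$-vertices may sit inside $H$: they are not forced to be leaves of any tree, and any extra $S$-vertices on spines or legs are then dealt with by a short pruning that uses stability of $S$ together with the slack in $|Y'|=h^2$. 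Your pigeonhole post-processing on $\phi$ is in the same spirit as that final pruning and would be fine if $T$ existed; the pendant-vertex trick is the missing idea that makes the reduction to Lemma~\ref{lem:conntw15} actually go through.
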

\begin{proof}
Let $\mu=\mu(h^2)$ be as in Lemma~\ref{lem:conntw15}.
Define $G'$ to be the graph obtained from $G$  by adding,
for each $v \in S$, a new  vertex $u_v$ whose unique neighbor in $G'$ is  $v$. Let $Y=\{u_v\colon v \in S\}$ (so $Y=G' \setminus G$).
Then  $|Y| =|S| \geq \mu$, every vertex in $Y$ has a neighbor in $G' - Y$, and $G' - Y = G$ is connected.

Applying Lemma~\ref{lem:conntw15} to $G'$ and $Y$, we obtain a set $Y' \subseteq Y$ with $|Y'| = h^2$ and an induced subgraph $H$ of $G' - Y = G$ for which one of the following holds:
\begin{itemize}
\item $H$ is a path and every vertex of $Y'$ has a neighbor in $H$.
Since every $v \in S$ has a unique neighbor in $Y$, and since $Y$
is anticomplete to $G \setminus S$, it follows that
$|H \cap S|  \geq |Y'| \geq h$. Truncating $H$, we obtain the required conclusion.
\item $H$ is a caterpillar, or the line graph of a caterpillar, or a subdivided star, or the line graph of a subdivided star. Moreover, every vertex
of $Y'$ has a unique neighbor in $H$ and $H \cap N(Y') = \mathcal{Z}(H)$.
Again, since $|N(y)|=1$ for every $y \in Y$, it is
easy to see that $H \setminus Y$ is a caterpillar, or the line graph of a caterpillar, or a subdivided star, or the line graph of a subdivided star.
We may assume that for every path $P$ in $H$, $|P \cap S| < h$, for otherwise the theorem holds.
Since $S$ is stable,  the observation in the previous sentence implies that
there exists an induced subgraph $H'$ of $H$, and $Y'' \subseteq Y'$ with $|Y''|=h$
such that $H'$ is a caterpillar, or the line graph of a caterpillar, or a subdivided star, or the line graph of a subdivided star,
$N(Y'') = \mathcal{Z}(H')$, and
every vertex of $N(Y) \cap H' = H' \cap S$ belongs to $\mathcal{Z}(H')$.
It follows that $H'=\mathcal{Z}(H')$, as required.\qedhere
\end{itemize}
\end{proof}

We can now prove Theorem \ref{thm claw}, which we restate.

\claw*

\begin{proof}
 Let $G$ be an $n$-vertex graph in $\mathcal{M}_t^*$ and let $d=d(t)$ be the constant given by Theorem \ref{thm:domsep}.
Let $N= 8t^2d,h=10Nd$
and $\eps = \frac{1}{2\mu^2}$ where $\mu = \mu(h)$ as in Lemma \ref{lemma : connectifier}.
Let $c=c(t, \eps)$ be as in Theorem~\ref{thm:disjoint separators}.
By Theorem~\ref{thm:disjoint separators} we may assume that
there exists $Z \subseteq V(G)$ with $\alpha(Z) \leq c \log^4 n$ such that,
denoting by $G''$ the component of $G \setminus Z$ with $w(G'')$ maximum, $w(G'')> \frac{1}{2}$ and   there exist  pairwise anticomplete subsets  $Y_1, \dots Y_{8t^2d}$
of $G$ such that
\begin{itemize}
\item for every $i$,  $|Y_i| <d$, and
\item  $N[Y_i]\cap G''$ is a $(w, \epsilon)$-balanced separator of $G''$, and
\item no vertex of $G''$ is in more than $t$ of the sets $N[Y_i]$.
\end{itemize}
We now randomly choose  $\mu$ vertices of $G''$ using the normalized function of $w$ on $G''$ as a probability distribution.
For every $i$, write $S_i=N[Y_i]$.
For every $i$ and for every component $D$ of $G''\setminus S_i$,
$\frac {w(D)}{w(G'')}  \leq 2 \eps $, and so, applying the union bound, the probability that no two of the vertices we chose  are in the same component of $G''\setminus S_i$ is at least
\[1-2\eps\sum_{j=1}^{\mu(h)}j \geq 1-\eps\mu(h)^2 = 1/2\,.\] By the linearity of expectation, there exist $S\subseteq G''$ with $|S|=\mu$
and a set $I \subseteq \{1, \dots, N\}$  with $|I| =\frac{N}{2}$
such that for every $i \in I$ and every
component $D$ of $G'' \setminus S_i$, $|S \cap D| \leq 1$.

\sta{$S$ is a stable set. \label{stableS}}

Suppose $s,t \in S$ are adjacent. Since  for every component $D$ of $G'' \setminus S_i$,
$|S \cap D| \leq 1$, it follows that for every $i \in \{1, \dots, N\}$,
$|\{s,t\} \cap N[Y_i]| \geq 1$. Since no vertex of $G$ is in more than $t$ of the sets
$N[Y_i]$, it follows that $N \leq 2t$, a contradiction. This proves \eqref{stableS}.
\\
\\
In view of \eqref{stableS} we now  apply Lemma \ref{lemma : connectifier} to $G''$ and $S$  to  obtain an induced subgraph $H$ of $G''$  where $X = \mathcal{Z}(H) \cap S = \{x_1,\dots,x_h\}$  and
$H$ is either a path, a caterpillar, the line graph of a caterpillar, a subdivided star, or the line graph of a subdivided star.  We will get a contradiction in each of the cases.
We start with the following:

\sta{ Let $1 \leq i < j \leq h$. Then for every path $P$  from $x_i$ to $x_j$
in $G''$ and for every $\ell \in I$, $S_\ell \cap P \neq \emptyset$.
Consequently, $|P| \geq 4t+2$. \label{farapart}}

Let $P$ be a path from $x_i$ to $x_j$ in $G''$.
Suppose $P \cap S_\ell= \emptyset $ for
some $\ell \in I$.
Then, there is a component $D$ of $G'' \setminus S_\ell$ such that $x_i,x_j \in D$,
a contradiction.
This proves that $P \cap S_\ell \neq \emptyset$ for every $\ell \in I$.
Since $N/2 \geq  4t^2$, and since no vertex of $G''$ is in more than $t$ of the sets $N[Y_i]$,
it follows that  $|P \setminus \{x_i,x_j\} | \ge  4t$, as required. This completes the proof of
\eqref{farapart}.
\\
\\
\sta{\label{claim no path caterpillar or L(caterpillar)}$H$ is not a path, a caterpillar, or the line graph of a caterpillar.}
Assume that $H$ is a path, a caterpillar, or the line graph of a caterpillar.
Without loss in generality, assume that  $x_1,\ldots,x_h$ appear in the natural order given by $H$ (see Figure~\ref{fig: path, caterpillar, L(caterpillar)}).
\begin{figure}[h]
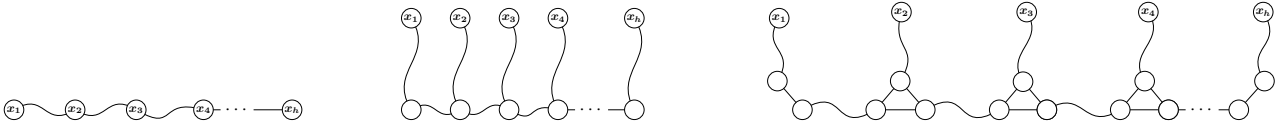

    \centering
    \scalebox{0.65}{\tikzfig{path_caterpillar_line_graph_of_caterpillar}}
    \caption{A path, a caterpillar, and the line graph of a caterpillar with $X$ ordered.}
    \label{fig: path, caterpillar, L(caterpillar)}
\end{figure}

For each $i = 1,\ldots,h/2$, let $P_i$ be a path in $H$ from $x_{2i-1}$ to $x_{2i}$.

Let $i \in \{1, \dots, \frac{h}{2}\}$.
By \eqref{farapart}, there is a minimal subpath  $P_i'$ of $P_i$  containing $x_{2i-1}$ and such  that $P_i' \cap S_j \neq \emptyset$ for every $j \in I$.
Since $N/2> t^2$, and since no vertex of $G$ is in more than $t$ of the sets $N[Y_j]$, it follows that  $|P'_i| >  t$.
Let $y_i$ be the end of $P_i'$  different from $x_{2i-1}$.
By the minimality of $P_i'$, there exists $j \in I$ such that $y_i \in S_j$,
and no other vertex of $P_i'$ belongs to $S_j$; write $j=j(i)$.
Since $\frac{h}{2} \geq 50Nd > 3d|I|$, there exists $j \in I$ such that
$j(i)=j$ for at least $3d$ values  of $i$. Since $|Y_j|<d$ for every $j \in I$,
there is $y \in Y_j$ and distinct $i_1,i_2,i_3 \in \{1, \dots, \frac{h}{2}\}$ such that $y$ is complete to $\{y_{i_1}, y_{i_2}, y_{i_3}\}$.
Recall  that $y$ is anticomplete to $(P_{i_1}' \setminus y_{i_1})  \cup (P_{i_2}' \setminus y_{i_2}) \cup  (P_{i_3}' \setminus y_{i_3})$. But now there is an $S_{t,t,t}$ in $G$
with center $y$ and paths contained in $P_{i_1}'$, $P_{i_2}'$, and $P_{i_3}'$, a contradiction.
This proves~\eqref{claim no path caterpillar or L(caterpillar)}.
\\
\\
\sta{\label{claim: not star} $H$ is not a subdivided star.}
Suppose that $H$ is a subdivided star with $|H \cap X| = h$ and $H \cap X = \mathcal{Z}(H)$.
Let $z$ be the unique vertex of $H$ with degree at least $3$.
Since $h \geq 4$,
there exist paths $P_1, P_2,P_3, P_4$ of $H$ where $P_i$ is from
$z$ to $x_i$.   By \eqref{farapart},   for at least three values of $i$,
say $i \in \{1,2,3\}$, $|P_i|> t$. But now $P_1 \cup P_2 \cup P_3$
contains an $S_{t,t,t}$ with center $z$ in $G$ (see Figure~\ref{fig: not star}), a contradiction.
This proves \eqref{claim: not star}.

\begin{figure}[h]
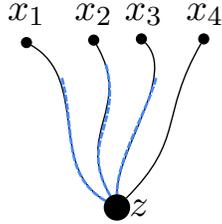

    \centering
    \scalebox{1.3}{\tikzfig{claw_case}}
    \caption{Visualization of the $S_{t,t,t}$ obtained to prove \eqref{claim: not star}.}\label{fig: not star}
\end{figure}

By \eqref{claim no path caterpillar or L(caterpillar)} and \eqref{claim: not star},
it follows that $H$ is the line graph of a subdivided star. Then
$H$ consists of a clique $K=\{k_1, \dots, k_h\}$ and paths
$P_1,  \dots, P_h$, where $P_i$ is from $k_i$ to $x_i$.
For every $i \in \{1, \dots, h\}$ let $I(i) \subseteq I$ be the set of all $j\in I$ such that $S_j \cap (P_i \setminus k_i)  \neq \emptyset$.

\sta{$|I(i)| < \frac{N}{8}$ for at most one value of $i$. \label{Ii}}

Suppose $|I(1)|, |I(2)| < \frac{N}{8}$. Since each of $k_1$, $k_2$ belongs to at most
$t$ of the sets $S_i$, it follows that $P_1 \cup P_2$ meets $S_i$ for at most
$2 \frac{N}{8} + 2t < \frac{N}{2} = |I|$ values of $i$, contrary to
\eqref{farapart}. This proves~\eqref{Ii}.
\\
\\
By renumbering if necessary, we can assume that  $|I(i)|  \geq \frac{N}{8}$
for every $i \in \{1, \dots, \frac{h}{2}\}$.
Let $i \in \{1, \dots, \frac{h}{2}\}  $.
Since $|I(i)| \geq \frac{N}{8}$, there is a minimal subpath  $P_i'$ of $P_i$  containing $x_i$ and such  that $P_i' \cap S_j \neq \emptyset$ for every $j \in I(i)$.
Since $N/8 >  t^2$, and since no vertex of $G$ is in more than $t$ of the sets $N[Y_i]$,
it follows that  $|P'_i|  > t$.
Let $y_i$ be the end of $P_i'$  different from $x_i$.
By the minimality of $P_i'$, there exists $j \in I$ such that $y_i \in S_j$,
and no other vertex of $P_i'$ belongs to $S_j$; write $j=j(i)$.
Since $\frac{h}{2} \geq 50Nd > 3d|I|$, there exists $j \in I$ such that
$j(i)=j$ for at least $3d$ values   of $i$. Since $|Y_j|<d$ for every $j \in I$,
there is $y \in Y_j$ and distinct $i_1,i_2,i_3 \in \{1, \dots, \frac{h}{2}\}$  such that $y$ is complete to $\{y_{i_1}, y_{i_2}, y_{i_3}\}$.
Recall that  $y$ is anticomplete to $(P_{i_1}' \setminus y_{i_1})  \cup (P_{i_2}' \setminus y_{i_2}) \cup  (P_{i_3}' \setminus y_{i_3})$. But now there is an $S_{t,t,t}$ in $G$
with center $y$ and paths contained in $P_{i_1}'$, $P_{i_2}'$, $P_{i_3}'$, a contradiction.
\end{proof}

\section{Acknowledgment}

We are grateful to Pawe\l{} Rz\k{a}\.zewski for allowing us to use Figure~\ref{fig:ex esd}.
Part of this research was conducted while the fourth author visited Princeton University.
The final phase of the writing of this paper took place at the Dagstuhl Seminar 25041, ``Solving Problems on Graphs: From Structure to Algorithms'', Jan 19--24, 2025.
We are grateful to both institutions for their hospitality.

\end{document}